\tikzset{
	>=stealth',
	punktchain/.style={
		rectangle,
		rounded corners,
		draw=black, thick,
		minimum height=3em,
		text centered,
		on chain},
	line/.style={draw, thick, <-},
	element/.style={
		tape,
		top color=white,
		bottom color=blue!50!black!60!,
		minimum width=8em,
		draw=blue!40!black!90, very thick,
		text width=10em,
		minimum height=3.5em,
		text centered,
		on chain},
	every join/.style={->, thick,shorten >=1pt},
	decoration={brace},
	tuborg/.style={decorate},
	tubnode/.style={midway, right=2pt},
}
\def\C{\ensuremath{\mathbb{C}}}
\def\H{\ensuremath{\mathbb{H}}}
\def\R{\ensuremath{\mathbb{R}}}
\def\Z{\ensuremath{\mathbb{Z}}}
\def\ch{\mathop{\mathrm{ch}}\nolimits}
\def\Coh{\mathop{\mathrm{Coh}}\nolimits}
\def\deg{\mathop{\mathrm{deg}}}
\def\ext{\mathop{\mathrm{ext}}\nolimits}
\def\Ext{\mathop{\mathrm{Ext}}\nolimits}
\def\HH{\mathrm{H}}
\def\Hom{\mathop{\mathrm{Hom}}\nolimits}
\def\min{\mathop{\mathrm{min}}\nolimits}
\def\rk{\mathop{\mathrm{rk}}}
\DeclarePairedDelimiter\floor{\lfloor}{\rfloor}
\def\abs#1{\left\lvert#1\right\rvert}
\newtheorem*{rep@theorem}{\rep@title}
\newcommand{\newreptheorem}[2]{%
\newenvironment{rep#1}[1]{%
 \def\rep@title{#2 \ref{##1}}%
 \begin{rep@theorem}}%
 {\end{rep@theorem}}}
\newtheorem{Thm}{Theorem}[section]
\newtheorem{Prop}[Thm]{Proposition}
\newtheorem{Lem}[Thm]{Lemma}
\newtheorem{Cor}[Thm]{Corollary}
\newtheorem{Con}[Thm]{Conjecture}
\newtheorem{Obs}[Thm]{Observation}
\newtheorem{thm-int}{Theorem}
\theoremstyle{definition}
\newtheorem{Def-s}[Thm]{Definition}
\newtheorem{Def}[Thm]{Definition}
\newtheorem{Rem}[Thm]{Remark}
\def\C{\ensuremath{\mathbb{C}}}
\def\H{\ensuremath{\mathbb{H}}}
\def\R{\ensuremath{\mathbb{R}}}
\def\Z{\ensuremath{\mathbb{Z}}}
\def\cA{\ensuremath{\mathcal A}}
\def\cF{\ensuremath{\mathcal F}}
\def\cO{\ensuremath{\mathcal O}}
\def\cT{\ensuremath{\mathcal T}}
\def\nabH{\nu_{\alpha, \beta,H}}
\def\db{\bar{\Delta}}
\def\vb{\bar{v}_H}
\begin{document}

\title[On stability conditions for the quintic threefold]{On stability conditions for the quintic threefold}

\author{Chunyi Li}
\address{C. L.:\\
Mathematics Institute, University of Warwick,
Coventry, CV4 7AL,
United Kingdom}
\email{C.Li.25@warwick.ac.uk}
\urladdr{https://sites.google.com/site/chunyili0401/}

\keywords{Bogomolov-Gieseker type inequality, Clifford type inequality, Bridgeland stability conditions, quintic threefold}
\subjclass[2010]{14F05 (Primary); 14J32, 18E30(Secondary)}

\begin{abstract} 
We study the Clifford type inequality for a particular type of curves $C_{2,2,5}$, which are contained in smooth quintic threefolds. This allows us to prove some stronger Bogomolov-Gieseker type inequalities for Chern characters of stable sheaves and tilt-stable objects on smooth quintic threefolds. Employing the previous framework by Bayer, Bertram, Macr\`\i, Stellari and Toda, we construct an open subset of stability conditions on every smooth quintic threefold in $\mathbf{P}^4_{\C}$. 
\end{abstract}

\date{\today}

\maketitle

\setcounter{tocdepth}{1}
\tableofcontents
\def\mhk{M^h_k} 
\def\lrd{V^r_d(\abs{H})}
\def\ext{\mathrm{ext}}
\def\Ext{\mathrm{Ext}}
\def\clf{\mathrm{Cliff}}
\def\rg{\frac{r}{g}}
\def\gr{\frac{g}{r}}
\def\fgr{\left\lfloor\frac{g}{r}\right\rfloor}
\def\lf{\left\lfloor}
\def\rf{\right\rfloor}
\def\pp{\mathbf{P}^2}
\def\vvv{\tilde{\mathbf{v}}}
\def\cer{C_{2,2,5}}
\def\ser{S_{2,5}}

\section{Introduction}
The notion of stability conditions on a triangulated category is introduced by Bridgeland in \cite{Bridgeland:Stab}. The existence of stability conditions on three-dimensional projective varieties, and more specifically
on Calabi-Yau threefolds, is often considered as one of the biggest open problem in the theory of Bridgeland
stability conditions in recent years. In series work of \cite{BMT:3folds-BG,BBMT:Fujita,BMS:stabCY3s}, the authors propose a general approach towards the constructions of geometric stability conditions on a smooth projective threefold. The construction involves the notion of tilt-stability for two-term complexes, and the existence of geometric stability conditions relies on a conjectural Bogomolov-Gieseker type inequality for the third Chern character of tilt-stable objects.

Stability conditions are only known to exist on few families of smooth projective threefolds: Fano threefolds \cite{Macri:P3,Benjamin:quadric,Li:Fano,Dulip:Fano,MMSZ:Fano}, Abelian threefolds \cite{Dulip-Antony:I,Dulip-Antony:II,BMS:stabCY3s} and Kummer type threefolds \cite{BMS:stabCY3s}. The smooth quintic threefolds will be the first example of strict Calabi-Yau threefolds that has geometric stability conditions. One need to be cautious that the original conjectural Bogomolov-Gieseker type inequality in \cite{BMT:3folds-BG} does not hold for all threefolds, counterexamples for the blowup at a point of another threefold has been constructed in \cite{Benjamin:conterBG,Benjamin-Christian:conterblowup}. However, due to the flexibility of the construction in \cite{BMT:3folds-BG} as well as the work \cite{Dulip-Toda:bgtostab}, modified Bogomolov-Gieseker type inequality will still imply the existence of stability conditions on such threefolds.

In this paper, we prove the following Bogomolov-Gieseker type inequalities for the second Chern character of slope stable sheaves on smooth quintic threefolds:
\begin{Thm}[Theorem \ref{thm:BGforslopestableX5}]
Let $F$ be a torsion free $\mu_H$-slope semistable sheaf on a smooth quintic threefold $(X,H)$. Suppose $\frac{H^2\ch_1(F)}{H^3\rk(F)}\in[-1,1]$, then
\begin{align*}
H\ch_2(F)\leq \begin{cases}
-\frac{1}2\abs{H^2\ch_1(F)}, & \text{when } \abs{\frac{H^2\ch_1(F)}{H^3\rk(F)}} \in[0, \frac{1}4];\\
\frac{1}2\abs{H^2\ch_1(F)} - \frac{5}4 \rk(E), & \text{when } \abs{\frac{H^2\ch_1(F)}{H^3\rk(F)}} \in [\frac{1}4, \frac{3}4];\\
\frac{3}2\abs{H^2\ch_1(F)} - 5\rk (E) , & \text{when } \abs{\frac{H^2\ch_1(F)}{H^3\rk(F)}} \in [\frac{3}4, 1].
\end{cases}
\end{align*}
The `$=$' can hold only when $\abs{\frac{H^2\ch_1(F)}{H^3\rk(F)}}\in\frac{1}4\Z$. Moreover, \text{when } $\abs{\frac{H^2\ch_1(F)}{H^3\rk(F)}} \in [0, \frac{1}{10}]\cup[\frac{9}{10},1]$, we have the following stronger bound:
$H\ch_2(F)\leq\frac{3}2\frac{(H^2\ch_1(F))^2}{H^3\rk(F)}-\abs{H^2\ch_1(F)}$.
\label{thm:bgforslopeonX5intro}
\end{Thm}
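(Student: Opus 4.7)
After replacing $F$ by (the reflexive hull of) its dual if $\beta := \frac{H^2\ch_1(F)}{H^3\rk(F)}$ is negative, I may assume $\beta \in [0,1]$; the task is then to bound $H\ch_2(F)$ from above by the piecewise function of $\beta$ in the statement.

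The core idea is to restrict $F$ to a general complete-intersection curve $C := \cer = X \cap Q_1 \cap Q_2$ for generic quadrics $Q_1, Q_2 \subset \P^4$. Such a $C$ has $H\cdot C = 20$, and adjunction on the Calabi--Yau $X$ gives $K_C = 4H|_C$, hence $g(C) = 41$. A Mehta--Ramanathan restriction argument, applied in two steps through the intermediate surface $\ser = X \cap Q_1$ and then to $C = \ser \cap Q_2$, should show that $F|_C$ is a vector bundle whose Harder--Narasimhan factors on $C$ have slopes near $\mu(F|_C) = 4H^2\ch_1(F)/\rk(F) = 20\beta$. Applying the Clifford-type inequality for $\cer$ established earlier in the paper to these factors, and to twists $F|_C \otimes \cO_C(kH)$ for small integer $k$, would then give piecewise-linear upper bounds on $h^0(F|_C \otimes \cO_C(kH))$.

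Next I would import those curve-level bounds back to $X$ via the Koszul resolution
\[
0 \to \cO_X(-4H) \to \cO_X(-2H)^{\oplus 2} \to \cO_X \to \cO_C \to 0,
\]
tensored with $F(kH)$. Combining the Clifford estimate with the vanishings $h^0(F(kH)) = 0$ for $k \ll 0$ (from slope-semistability) and $h^3(F(kH)) = h^0(F(-kH))^{\vee} = 0$ for $k \gg 0$ (Serre duality, using $K_X = 0$), and applying Hirzebruch--Riemann--Roch on $X$ term by term, one should extract an inequality involving only $\rk F$, $H^2\ch_1(F)$ and $H\ch_2(F)$: the key point is that $\ch_3(F)$ drops out of the relevant second-difference in $k$. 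The three ranges of $\beta$ in the theorem should correspond to three different choices of $k$, each of which optimizes the Clifford input on $\cer$ at the matching slope range; the transition points $\beta = \tfrac14, \tfrac34$ are precisely where the optimal $k$ changes, and equality is then forced to $\beta \in \tfrac14\Z$, matching the breakpoints of the Clifford inequality.

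The stronger quadratic bound for $\beta \in [0, \tfrac1{10}] \cup [\tfrac{9}{10}, 1]$ should come from a finer Clifford estimate at extreme slopes, where the absence of low-degree special linear series on $\cer$ upgrades the linear bound to a parabolic one. The principal obstacle throughout is the Clifford-type inequality for $\cer$ itself: proving it with the sharpness needed at the breakpoints $\beta = \tfrac14, \tfrac34$ demands a Brill--Noether-type analysis of vector bundles on this particular genus-$41$ curve, exploiting its embedding $\cer \subset \ser \subset \P^4$ and the structure $K_C = 4H|_C$. Once that input is in place, the reduction from $X$ to $C$ sketched above is a careful but essentially mechanical Koszul/HRR computation, with some care needed to handle possibly non-semistable $F|_C$ via its HN filtration.
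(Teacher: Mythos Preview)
Your architecture is right—restrict to $\cer$, invoke the Clifford-type inequality there, and lift back via Riemann--Roch—and this is indeed what the paper does. But the restriction step as you describe it is a genuine gap.

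Mehta--Ramanathan gives semistability of $F|_Y$ only for $Y\in|mH|$ with $m\gg 0$; here you need $m=2$. Effective restriction theorems (Flenner, Langer) give explicit bounds on $m$, but those depend on the discriminant $\db_H(F)$ and will not yield $m=2$ uniformly. Your fallback—working with the HN filtration of $F|_C$ and saying the factors have slopes ``near'' $20\beta$—does not give you control sharp enough to hit the breakpoints $\beta=\tfrac14,\tfrac34$ exactly; the Clifford bound applied to HN factors of uncontrolled slope will leak, and the piecewise-linear inequality will fail. The paper replaces Mehta--Ramanathan by Feyzbakhsh's restriction lemma (Lemma~\ref{lem:soheylares}): if $F$ and $F(-2H)[1]$ are both $\nu_{\alpha,0,H}$-tilt stable of the \emph{same} slope for some $\alpha>0$, then $F|_Y$ is genuinely slope-semistable for generic $Y\in|2H|$. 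Arranging this requires first passing from slope-semistable sheaves to tilt-stable objects, then running a minimum-discriminant / wall-crossing argument to ensure $F$ stays tilt-stable all the way down to the line through $p_H(F)$ and $p_H(F(-2H))$; only then does the restriction lemma fire. This is the missing idea in your plan.

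A second, less serious, divergence: rather than a Koszul resolution on $X$ with a second-difference trick to kill $\ch_3$, the paper factors through the surface $\ser$. It first proves the inequality on $\ser$ (Proposition~\ref{prop:bgfors25}) by restricting $\ser\to\cer$ and using HRR on the surface, where there is no $\ch_3$ to eliminate; then it restricts $X\to\ser$ (Corollary~\ref{cor:bgforX5}) by a second application of the same restriction lemma. This two-step route is cleaner than your direct $X\to\cer$ Koszul computation, though yours is not wrong in spirit—just harder to make precise, and still dependent on the restriction input you do not yet have.
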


In a special case that when $\frac{H^2\ch_1(F)}{H^3\rk(F)}=-\frac{1}2$, we have $\Delta(F)H\geq 1.25\rk(F)^2$, which is a slightly weaker inequality than that in \cite[Conjecture 1.2]{Toda:Gepner}. In particular, it implies the rank $2$ case as that in \cite[Proposition 1.3]{Toda:Gepner}.

Theorem \ref{thm:bgforslopeonX5intro} implies \cite[Conjecture 4.1]{BMS:stabCY3s} for smooth quintic threefolds with a little constrain on the parameters $(\alpha,\beta)$, for which we will review in the next few paragraphs.
\begin{Thm}[Theorem \ref{thm:boginqforx5}]
Conjecture 4.1 in \emph{ \cite{BMS:stabCY3s}} holds for smooth quintic threefolds when the parameters satify $\alpha^2+(\beta-\floor{\beta}-\frac{1}2)^2>\frac{1}4.$
\label{thm:bgforX5intro}
\end{Thm}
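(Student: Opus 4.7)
The plan is to deduce Conjecture 4.1 of \cite{BMS:stabCY3s} from the strengthened slope-semistable Bogomolov-Gieseker inequality (Theorem \ref{thm:bgforslopeonX5intro}) by a wall-crossing argument in the $(\alpha,\beta)$-upper half plane, in the spirit of \cite{BMS:stabCY3s, Li:Fano}. Write
\[
Q_{\alpha,\beta}(E) := \alpha^2\overline{\Delta}_H(E) + 4(H\ch_2^\beta(E))^2 - 6H^2\ch_1^\beta(E)\cdot\ch_3^\beta(E)
\]
for the quantity whose non-negativity is the content of the conjecture, and suppose for contradiction that $Q_{\alpha,\beta}(E) < 0$ for some $\nu_{\alpha,\beta}$-tilt-semistable $E$ with $(\alpha,\beta)$ in the admissible region.

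First, I would reduce to a normalized parameter region. The quantity $Q_{\alpha,\beta}(E)$ is invariant under tensoring $E$ by $\cO(kH)$ combined with $\beta \mapsto \beta+k$, and under derived duality combined with $\beta \mapsto -\beta$. Using these symmetries one may assume $\beta \in [-\tfrac{1}{2}, 0)$, for which the geometric hypothesis becomes $\alpha^2 + (\beta + \tfrac{1}{2})^2 > \tfrac{1}{4}$: the parameters $(\alpha,\beta)$ lie outside the upper semicircle of radius $\tfrac{1}{2}$ centered at $(-\tfrac{1}{2}, 0)$ in the $(\beta,\alpha)$-plane.

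Next, I would examine the numerical wall $W$ of $E$, the semicircle (or vertical ray) in the $(\alpha,\beta)$-plane through $(\alpha,\beta)$ along which $\nu_{\alpha',\beta'}(E) = 0$. Along $W$ the function $Q_{\alpha',\beta'}(E)$ is polynomial, and its structure is tied to the $\mu_H$-slope of the sheaves arising as large-volume limits of $E$, or of its Jordan-H\"older factors after crossing an actual wall of tilt-stability. By induction on the discriminant $\overline{\Delta}_H(E)$, the problem reduces to the base case that $E$ (up to shift) is a $\mu_H$-slope-semistable sheaf; there, Theorem \ref{thm:bgforslopeonX5intro} gives a bound on $H\ch_2(E)$ strong enough to force $Q_{\alpha',\beta'}(E) \geq 0$ for every $(\alpha',\beta')$ in the admissible region, contradicting the standing assumption.

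The main obstacle is verifying that the induction closes: at each actual wall, the Jordan-H\"older factors must themselves lie in the admissible region before the inductive hypothesis applies. The forbidden semicircles $\alpha^2 + (\beta - \floor{\beta} - \tfrac{1}{2})^2 \leq \tfrac{1}{4}$ coincide with the numerical walls associated to the twists and shifts $\cO_X(kH)[m]$ of the structure sheaf, so avoiding them precisely rules out destabilization by such line-bundle-like subobjects. The extra slack encoded in Theorem \ref{thm:bgforslopeonX5intro} for slopes in $[0,\tfrac{1}{10}]\cup[\tfrac{9}{10},1]$ provides exactly the margin needed to handle the factors whose slopes land near the forbidden region, closing the induction and contradicting $Q_{\alpha,\beta}(E) < 0$.
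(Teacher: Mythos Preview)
Your outline has a genuine gap at the step where you invoke Theorem \ref{thm:bgforslopeonX5intro}. That theorem bounds only the \emph{second} Chern character $H\ch_2(E)$ of a slope-semistable sheaf, whereas $Q_{\alpha,\beta}(E)$ involves $\ch_3(E)$; a bound on $H\ch_2$ alone puts no constraint on $\ch_3$, so it cannot by itself force $Q_{\alpha',\beta'}(E)\geq 0$. In the Fano case one gets the needed $\ch_3$-control from $\Hom$-vanishing against line bundles, but on a Calabi--Yau threefold those vanishings fail, and this is precisely the new difficulty the paper has to overcome.

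The paper's argument differs from yours in two linked respects. First, the wall-crossing reduction does not terminate at slope-semistable sheaves (the large-volume limit) but at \emph{Brill--Noether stable} objects: the parameter hypothesis guarantees that the wall through $(\alpha,\beta)$ and $p_H(E)$ meets a vertical line $\beta_0\in\Z$, and after twisting to $\beta_0=0$ and running the discriminant induction down the $\alpha$-axis one lands at an object $E$ that is $\nu_{\alpha,0,H}$-tilt stable for all small $\alpha>0$, with $\nu_{BN}(E)\in[-\tfrac12,\tfrac12]$ (Proposition \ref{prop:BGforbnstab}). Second, the bridge from the $\ch_2$-bound to a $\ch_3$-bound is supplied by a cone construction: for $\nu_{BN}(E)>0$ one forms $\tilde E=\mathrm{Cone}(\cO_X\otimes\Hom(\cO_X,E)\to E)$, shows it is again Brill--Noether semistable (Lemma \ref{lem:extension}), and then applies Theorem \ref{thm:BGforslopestableX5} to $\tilde E$ to bound $\hom(\cO_X,E)$. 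Hirzebruch--Riemann--Roch on the quintic ($\mathrm{td}_1=0$, $\mathrm{td}_2=\tfrac56 H^2$, $\chi(\cO_X)=0$) converts this into an upper bound on $\ch_3(E)$, and a case analysis on $H^2\ch_1/H^3\rk$ then yields $Q_{0,0}(E)\geq 0$. The extra slack near slopes $0$ and $1$ in Theorem \ref{thm:bgforslopeonX5intro} is used in this case analysis, not in the wall-crossing induction itself. Without this cone-plus-HRR step your argument does not close.
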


Employing the framework in  \cite{BMS:stabCY3s,BMT:3folds-BG,Dulip-Toda:bgtostab}, Theorem \ref{thm:bgforX5intro} allows us to construct a family of Bridgeland stability conditions on the bounded derived category of coherent sheaves on each smooth quintic threefold. To give the accurate statement, we introduce some notions from \cite{BMT:3folds-BG,BBMT:Fujita,BMS:stabCY3s} and briefly summarize the construction of stability conditions on a quintic threefold.

\textbf{Stability conditions on smooth quintic threefolds:} Let $(X,H)$ be a smooth quintic threefold with $H=[\cO_X(1)]$, let $D^b(X)$ be the bounded derived category of coherent sheaves on $X$. As shown in \cite[Proposition 5.3]{Bridgeland:Stab}, a stability condition on $D^b(X)$ is equivalently determined by a pair $\sigma=(Z,\mathcal A)$, where the central charge $Z:K_0(\cA)\rightarrow \C$ is a group homomorphism and $\cA\subset D^b(X)$ is the heart of a bounded t-structure, which have to satisfy the following three properties.
\begin{enumerate}
\item For any non-zero object $E\in \cA$, its central charge $Z([E])\in \R_{>0}\cdot e^{(0,1]\pi i}$. 

This allows us to define a notion of slope-stability on $\cA$ via the slope function $$\nu_\sigma(E)\coloneqq-\frac{\Re Z([E])}{\Im Z([E])}.$$
\item With respect to the slope-stability $\nu_{\sigma}$, each non-zero object $E\in \cA$ admits a unique Harder-Narasimhan filtration:
$$0=E_0\subset E_1\subset\dots \subset E_m=E$$
such that: each quotient $F_i\coloneqq E_i/E_{i-1}$ is $\mu_\sigma$-slope semistable with $\nu_{\sigma}(F_1)>\nu_{\sigma}(F_2)>\dots>\nu_{\sigma}(F_m)$. We set $\nu_{\sigma}^+(E)\coloneqq \nu_{\sigma}(F_1)$ and $\nu_{\sigma}^-(E)\coloneqq \nu_{\sigma}(F_m)$.
\item (support property) There is a constant $C>0$ such that for all  semistable object $E\in \cA$, we have $\|[E]\|\leq C\abs{Z([E])}$, where $\|\cdot\|$ is a fixed norm on $K_0(X)\otimes \R$.
\end{enumerate}
Under the framework of \cite{BMT:3folds-BG,BBMT:Fujita,BMS:stabCY3s}, the heart $\cA$ of the stability condition is constructed by `double-tilting' $\Coh(X)$. Denote $\mu_H$ as the slope stability on $\Coh(X)$. For any object $E\in\Coh(X)$, let $\mu^+_H(E)$ ($\mu^-_H(E)$) be the maximum (minimum) slope of its Harder-Narasimhan factors. The first tilting-heart $\Coh^{ \beta,H}(X)\subset D^b(X)$ with parameter $\beta\in\R$ is the extension-closure $\langle \mathcal T_{\beta,H},\mathcal F_{H,\beta}[1]\rangle$, where
\begin{align*}
\mathcal T_{\beta,H}=\{E\in\Coh(X)|\mu^-_H(E)>\beta\};\;\;\;
\mathcal F_{\beta,H}=\{E\in\Coh(X)|\mu^+_H(E)\leq \beta\}.
\end{align*}

Given $\alpha\in \R_{>0}$, we may define the tilt-slope function for objects in  $\Coh^{\beta,H}(X)$ as follows: for an object $E\in \Coh^{\beta,H}(X)$, its tilt-slope function is defined as
\begin{align}
\nu'_{\alpha,\beta,H}(E)\coloneqq\begin{cases}
\frac{H\ch_2^{\beta H}(E)-\frac{\alpha^2}2 H^3\ch_0(E)}{H^{2}\ch^{\beta H}_1(E)}, & \text{ when } H^{2}\ch^{\beta H}_1(E)>0;\\
+\infty, & \text{ when } H^{2}\ch^{\beta H}_1(E)=0.
\end{cases}\label{eq:nuabH'}
\end{align}

The explicit formulas of twisted Chern characters $\ch_i^{\beta H}$ are given at the beginning of Section \ref{sec:2}.

The heart $\cA^{\alpha,\beta,H}(X)\subset D^b(X)$ is defined  as $\langle \cT_{\alpha,\beta,H}',\cF_{\alpha,\beta,H}' [1]\rangle,$
where
\begin{align*}
\cT_{\alpha,\beta,H}'\coloneqq\{E\in \Coh^{\beta,H}(X)|\nabH'^-(E) >0\};\\
\cF_{\alpha,\beta,H}'\coloneqq\{E\in \Coh^{\beta,H}(X)|\nabH'^+(E)\leq 0\}.
\end{align*}
The central charge on $\cA^{\alpha,\beta,H}(X)$ is defined as that in \cite[Lemma 8.3]{BMS:stabCY3s}:
\begin{align}
    Z_{\alpha,\beta,H}^{a,b}\coloneqq (-\ch_3^{\beta H}+bH\ch_2^{\beta H}+aH^2\ch_1^{\beta H})+i(H\ch_2^{\beta H}-\frac{\alpha^2}2H^3\ch_0). \label{eq:centralcharge}
\end{align}

As a corollary of \cite[Conjecture 4.1]{BMS:stabCY3s} employing the framework in \cite{BMT:3folds-BG,BBMT:Fujita,BMS:stabCY3s}, the construction above offers us a family of stability conditions.
\begin{Thm}[Theorem \ref{thm:bgforX5intro}, {\cite[Theorem 8.6, Proposition 8.10]{BMS:stabCY3s}}]
There is a continuous family of Bridgeland stability conditions $\sigma_{\alpha,\beta,H}^{a,b}=(Z_{\alpha,\beta,H}^{a,b}(X),\cA^{\alpha,\beta,H}(X))$ on each smooth quintic threefold $(X,H)$, parameterized by the set $(\alpha,\beta,a,b)\in \R_{>0}\times \R\times \R_{>0}\times \R$ such that 
$$\alpha^2+(\beta-\floor{\beta}-\frac{1}2)^2>\frac{1}4; \text{ and }a>\frac{\alpha^2}6+\frac{1}2\abs{b}\alpha.$$
This family is a slice of the $\widetilde{GL}^+_2(\R)$-action on an open subset of the space of stability conditions on $D^b(X)$.
\label{thm:familyofstabonX}
\end{Thm}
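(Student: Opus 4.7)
The plan is to assemble the theorem from Theorem \ref{thm:bgforX5intro} by feeding the latter into the general machinery of \cite{BMT:3folds-BG,BBMT:Fujita,BMS:stabCY3s,Dulip-Toda:bgtostab}. The key black box is \cite[Theorem 8.6]{BMS:stabCY3s}, which asserts that if \cite[Conjecture 4.1]{BMS:stabCY3s} holds for a threefold $(X,H)$ at some $(\alpha,\beta)$, then for every $(a,b)$ with $a>\frac{\alpha^2}{6}+\frac{1}{2}|b|\alpha$ the pair $(Z_{\alpha,\beta,H}^{a,b},\cA^{\alpha,\beta,H}(X))$ is a weak stability condition, and \cite[Proposition 8.10]{BMS:stabCY3s} upgrades this to a full Bridgeland stability condition by producing an explicit quadratic form witnessing the support property.

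The first step is therefore to verify the hypothesis of \cite[Theorem 8.6]{BMS:stabCY3s} at every $(\alpha,\beta)$ in the stated region $\alpha^2+(\beta-\lfloor\beta\rfloor-\frac{1}{2})^2>\frac{1}{4}$; but this is precisely the content of Theorem \ref{thm:bgforX5intro}. Concretely, the latter supplies the bound
\[
\ch_3^{\beta H}(E) \leq \tfrac{\alpha^2}{6}\, H^2\ch_1^{\beta H}(E)
\]
for every tilt-semistable $E\in\Coh^{\beta,H}(X)$ with $\nu'_{\alpha,\beta,H}(E)=0$, which is exactly the form of \cite[Conjecture 4.1]{BMS:stabCY3s} used in the construction. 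The condition $a>\frac{\alpha^2}{6}+\frac{1}{2}|b|\alpha$ is then precisely what is needed so that the $b$-term in $\Re Z_{\alpha,\beta,H}^{a,b}$ is absorbed and the stability-function inequality $\Re Z<0$ persists on objects with $\Im Z=0$.

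With the tilt BG-type bound in hand I would then apply \cite[Theorem 8.6]{BMS:stabCY3s} to obtain the weak stability condition, \cite[Proposition 8.10]{BMS:stabCY3s} (whose quadratic form is built from the classical Bogomolov inequality for $\Delta$ together with the $\ch_3^{\beta H}$-bound above) for the support property, and finally Bridgeland's deformation theorem to deduce continuity of $\sigma_{\alpha,\beta,H}^{a,b}$ in $(\alpha,\beta,a,b)$ and the identification with a slice of the $\widetilde{GL}_2^+(\R)$-orbit.

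The main obstacle in this theorem lies entirely in Theorem \ref{thm:bgforX5intro}; once that is in place, the argument above is a formal consequence of the BMS framework, and the only mild bookkeeping is to check that the parameter restriction $\alpha^2+(\beta-\lfloor\beta\rfloor-\frac{1}{2})^2>\frac{1}{4}$ propagates unchanged through the double-tilt construction and that the correction $\frac{1}{2}|b|\alpha$ in the bound on $a$ correctly captures the effect of the $b$-term in the central charge.
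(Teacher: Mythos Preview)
Your proposal is correct and matches the paper's approach exactly: the paper does not give a standalone proof of this theorem but rather states it as a direct corollary, citing Theorem \ref{thm:bgforX5intro} together with \cite[Theorem 8.6, Proposition 8.10]{BMS:stabCY3s}. Your outline of how to assemble these ingredients is precisely what is intended; the only very minor quibble is terminological---in \cite{BMS:stabCY3s}, Theorem 8.6 already produces a genuine stability condition (not merely a weak one), with Proposition 8.10 supplying the explicit support property and continuity---but this does not affect the substance of your argument.
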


The mirror family of $X$ is parameterized by the stack $\mathcal M_K$, which is called the stringy K\"{a}hler moduli space of $X$:  $$\mathcal M_K:=\left[\{\psi\in \mathbb C|\psi^5\neq 1\}/\mu_5\right].$$
Here the generator of $\mu_5$ acts on $\mathbb C$ by the multiplication of $e^{\frac{2\pi i}5}$. Based on the papers \cite{Aspinwall:Dbranes-CY}, \cite[Remark 3.9]{Bridgeland:embeding} and \cite{Toda:Gepner}, it is expected that there is an embedding from the stringy K\"{a}hler moduli space to the double quotient:
$$I:\mathcal M_K \hookrightarrow [\mathrm{Aut}(D^b(X))\setminus \mathrm{Stab}(X)/\mathbb C].$$
We refer readers to \cite[Section 7.1]{Aspinwall:Dbranes-CY} and \cite[Section 3]{Toda:Gepner} for more detailed discussions and predictions on the formula of centrals charge and heart structures. Under this embedding, the images of (the neighbourhoods of) three special points are of particular interests:
\begin{itemize}
    \item the \textit{large volume(radius) limit} at the point $\psi=\infty$;
    \item the \textit{conifold gap point} at the point $\psi^5=1$;
    \item the \textit{Gepner point} the point $\psi=0$.
\end{itemize}
Up to the actions by Aut$(D^b(X))$ and $\mathbb C$, the images of the neighborhood of the large volume limit are expected to be expressed by geometric stability conditions with predicted central charge:

$$Z_{\beta,t,H}\coloneqq (-\ch_3^{\beta H} +\frac{t^2}2H^2\ch_1^{\beta H})+i(tH\ch_2^{\beta H}-\frac{5t^3}6H^3\ch_0),$$

where $\beta\in \R$ and $t>0$. Scaling the imaginary part of (\ref{eq:centralcharge}) by $t$, let $b=0$, $a=\frac{t^2}2$ and $\alpha=\frac{\sqrt{15}}3t$; we get all such central charges for $t>1$. In particular, the space of stability conditions constructed in Theorem \ref{thm:familyofstabonX} contains a neighbourhood of the large volume limit. 

Up to the actions by Aut$(D^b(X))$, the limit of central charges near the conifold gap point is expected to satisfy $Z(\mathcal O_X)=0$. Note that in (\ref{eq:centralcharge}), one may let $\beta=0$ and $\alpha\rightarrow 0$ so that the kernel of the central charge will tend to the character of $\mathcal O_X$. In particular, the space of stability conditions constructed in Theorem \ref{thm:familyofstabonX} contains parts of the neighbourhood of conifold gap point.

The image of the Gepner point is expected to be represented by a stability condition that is fixed by the action $(\mathrm{ST}_{\mathcal O_X}\circ \otimes \mathcal O(H),-\frac{2}5)$. By \cite{Toda:Gepner}, to construct such a stability condition, we prior need a Bogomolov-Gieseker type inequality which is `slighly' stronger than that in Theorem \ref{thm:bgforslopeonX5intro}. We hope to prove this better bound in some future projects after introducing more techniques in the paper \cite{Li:bgforfanohigh}.

\subsection{Organisation and Approach:}
The logic flow of the proof is as follows:
\begin{center}
\textcircled{4} Proposition \ref{prop:clfboundsforC225} $\xLongrightarrow[\text{\textcircled{3}}]{ }$ Theorem \ref{thm:BGforslopestableX5} $\xLongrightarrow[\text{\textcircled{2}}]{}$ Proposition \ref{prop:BGforbnstab} $\xLongrightarrow[\text{\textcircled{1}}]{\text{Theorem \ref{thm:bnimpliesall}}}$ Theorem \ref{thm:boginqforx5}.
\end{center}
Each statement above is an inequality for characters of certain semistable objects. Each `$\implies$' only relies on the previous inequality but not relates to the arguments for that inequality.   
The argument in \textcircled{1} follows the technique in \cite[Section 5]{BMS:stabCY3s}, it is also originated from the idea in \cite[Section 2.2]{Macri:P3}. Naively speaking, by \textcircled{1} we may reduce the inequality for stable objects with respect to every tilt-slope functions to a single type: the so-called `Brill-Noether' stable objects.  The mainstream of the argument in \textcircled{2} is to follow the technique developed for Fano threefolds as that in \cite{Li:Fano,Dulip:Fano,MMSZ:Fano}. However, in the Calabi-Yau threefold case, we don't have some of the $\Hom$ vanishings as that in the Fano threefolds case. Instead, we need to estimate the $\hom(\cO_X,E)$ for Brill-Noether stable objects. The original idea for this estimation via stability conditions, as far as the author knows, first appears in \cite{wallcrossing-BrillNoether} which reproves the Brill-Noether generality of certain curves on K3 surfaces as that in \cite{Lazarsfeld:BN-Petri}. The estimation for $\hom(\cO_X,E)$ necessarily relies on a stronger Bogomolov-Gieseker type inequality for the second Chern character of slope stable objects, which is the statement of Theorem \ref{thm:BGforslopestableX5}. In addition to Proposition \ref{prop:clfboundsforC225}, the argument in \textcircled{3} relies on two techniques: the deformation of stability conditions and Feyzbakhsh's restriction lemma. A similar deformation argument has been used in \cite{Li:Fano} for the case of Fano threefolds with index one. The restriction lemma first appears in \cite{Fe16}, where the author shows the stability of vector bundles on curves restricted from a K3 surface. More details about the restriction technique via stability conditions appear in Feyzbakhsh's thesis. The argument \textcircled{3} can produce more Bogomolov-Gieseker type inequalities for the first two Chern characters for several other varieties. Some results focused on this direction will appear soon in \cite{Li:bgforfanohigh}. \textcircled{4} Proposition \ref{prop:clfboundsforC225} is the Clifford type bound for the dimension of global sections of stable vector bundles on curves $C_{2,2,5}$, the complete intersection of two quadratics and a quintic hypersurface in $\mathbf{P}^4$. As a topic of its own interest, several general results on the Clifford type bound for curves can be found in \cite{Aprodu2014Restricted,Lange2015g4,Lange2015g5,Lange2017g6,Mercat2002CLIFFORD}. It is pity that none of the results mentioned above fit in our situation since we need the sharp bounds at some critical slopes $\mu=5$, $10$, $30$ and $35$. Based on the idea in \cite{Fe17}, together with Feyzbakhsh, we develop our own methods to estimate the Clifford type bound for curves supported on K3 and Fano surfaces via stability conditions in \cite{FL:clford}.  Especially for this case, we think $C_{2,2,5}$ as a curve on a degree four del Pezzo surface.  More introductions about the technical details in \textcircled{4} can be found in \cite{Fe17,FL:clford}.

We organize the paper slightly different from the logic flow. Section \ref{sec:2} is to fix some notations and  to collect  some lemmas and tools that will be useful in every other section. In section \ref{sec:main}, we assume the result in Theorem \ref{thm:BGforslopestableX5} and directly prove our main Theorem \ref{thm:boginqforx5}. We make this arrangement since the arguments in this part are more well-established, also we would like to convince the reader that a stronger Bogomolov-Gieseker type inequality for the second Chern character of slope stable sheaves will imply  Bogomolov-Gieseker type inequality for the third Chern character of tilt-stable complexes at this early stage. Section \ref{sec:clf} is devoted to proving the Clifford type bound for the dimension of global sections of a stable vector bundle on the curve $C_{2,2,5}$. This section involves a certain amount of computations. As for the convenience of the readers, there is no harm to skip these details first. Section \ref{sec:5} is to proof the stronger Bogomolov-Gieseker type inequality for the surfaces $S_{2,5}$ based on the inequality in Proposition \ref{prop:clfboundsforC225}.

\textbf{Acknowledgement:}  The author is a Leverhulme Early Career Fellow at the University of Warwick and would like to acknowledge the Leverhulme Trust for the support. The work was initiated when the author was a postdoc at the University of Edinburgh and was supported by the ERC starting grant `WallXBirGeom' 337039. I would like to thank Arend Bayer, Soheyla Feyzbakhsh, Sheldon Katz, Emanuele Macr\`\i, Laura Pertusi, Benjamin Schmidt, Junliang Shen, Paolo Stellari and Xiaolei Zhao for many useful discussions on this topic. I am grateful to Tom Bridgeland and my advisor Tom Nevins for encouraging me to carry on thinking this problem. The main breakthrough of this project is done during my visit at BICMR in July 2018. I would like to thank Zhiyu Tian, Chenyang Xu and Qizheng Yin for their hospitality.

\section{Background: tilt-stability condition and wall-crossing}
\label{sec:2}
\subsection{Stability condition: notations and conventions}

In this section, we review the notion of stability and tilt-stability for smooth varieties introduced in \cite{Bridgeland:Stab,BMT:3folds-BG,Dulip-Toda:bgtostab}. We then recall the conjectural Bogomolov-Gieseker type inequality for tilt-stable complexes proposed
there.

Let $X$ be a smooth projective complex variety and $H\in NS(X)_{\R}$  be a real ample divisor class. Let the dimension of $X$ be $n$, in this paper, $n$ will always be $2$ or $3$. For an arbitrary divisor class $B\in NS(X)_{\R}$ , we will always denote the twisted Chern characters as follows:
\begin{align*}
& \ch_0^{B}=\ch_0=\rk
& \ch_2^{B}=\ch_2-B\ch_1+\frac{B^2}{2}\ch_0\\
& \ch_1^{B}=\ch_1-B\ch_0
& \ch_3^{B}=\ch_3-B\ch_2+\frac{B^2}{2}H^2\ch_1-\frac{B^3}{6}\ch_0.
\end{align*}
In this paper, we are mainly interested in smooth quintic threefold whose $NS(X)_{\R}$ is of rank $1$, we will always assume $B=\beta H$ for some $\beta\in \R$.  The $\mu_{H}$-slope of  a coherent sheaf $E$ on $X$ is defined as 
\begin{align*}
\mu_H(E)=\begin{cases}
\frac{H^{n-1}\ch_1(E)}{H^n\ch_0(E)}, & \text{when }\ch_0(E)\neq 0;\\
+\infty, & \text{when }\ch_0(E)=0.
\end{cases}
\end{align*}

\begin{Def}
A coherent sheaf $E$ is called slope (semi)stable if for any non-trivial subsheaf $F\hookrightarrow E$, we have $$\mu_H(F)<(\leq) \mu_H(E/F).$$
\label{def:slopestable}
\end{Def}

Each coherent sheaf $E$ admits a unique Harder-Narasimhan filtration:
$$0=E_0\subset E_1\subset\dots \subset E_m=E$$
such that: each quotient $F_i\coloneqq E_i/E_{i-1}$ is slope semistable; and $\mu_H(F_1)>\mu_H(F_2)>\dots>\mu_H(F_m)$. We set $\mu^+_H(E)\coloneqq \mu_H(F_1)$ and $\mu^-_H(E)\coloneqq \mu_H(F_m)$.

There exists \emph{torsion pairs} $(\mathcal T_{\beta,H},\mathcal F_{\beta,H})$ in $\Coh(X)$ defined as follows:
\begin{align*}
\mathcal T_{\beta,H}=\langle \text{semistable } E\in \Coh(E) \text{ with } \mu_H(E)>\beta\rangle=\{E|\mu^-_H(E)>\beta\};\\
\mathcal F_{\beta,H}=\langle \text{semistable } E\in \Coh(E) \text{ with } \mu_H(E)\leq\beta\rangle=\{E|\mu^+_H(E)\leq \beta\}.
\end{align*}

\begin{Def}
We let $\Coh^{\beta,H}(X)\subset D^b(X)$ be the extension-closure $$\langle \mathcal T_{\beta,H},\mathcal F_{\beta,H}[1]\rangle.$$

\label{def:tiltheart}
\end{Def}
By the general theory on tilting heart in \cite{Happel-al:tilting}, $\Coh^{\beta,H}(X)$ is the heart of a t-structure in $D^b(X)$. Given $\alpha\in \R$, we may define the tilt-slope function for objects in  $\Coh^{\beta,H}(X)$ as follows: for an object $E\in \Coh^{\beta,H}(X)$, its tilt-slope function
\begin{align*}
\nu_{\alpha,\beta,H}(E)\coloneqq\begin{cases}
\frac{H^{n-2}\ch_2(E)-\alpha H^n\ch_0(E)}{H^{n-1}\ch^{\beta H}_1(E)}, & \text{ when } H^{n-1}\ch^{\beta H}_1(E)>0;\\
+\infty, & \text{ when } H^{n-1}\ch^{\beta H}_1(E)=0.
\end{cases}
\end{align*}
\begin{Def}
An object $E\in\Coh^{\beta,H}(X)$ is called $\nabH$-tilt slope (semi)stable if for any non-trivial subobject $F\hookrightarrow E$ in $\Coh^{\beta,H}(X)$, we have $$\nabH(F)<(\leq) \nabH(E/F).$$
An object $E\in D^b(X)$ is called $\nabH$-tilt (semi)stable if $E[m]\in \Coh^{\beta,H}(X)$ is $\nabH$-tilt (semi)stable for some homological shift $m\in \Z$.
\label{def:tiltslopestable}
\end{Def}
The tilt slope stability also admits Harder-Narasimhan property when $\alpha>\frac{\beta^2}2$. For an object $E\in\Coh^{H,\beta}(X)$ we may write $\nabH^+(E)$ and $\nabH^-(E)$ for the maximum and minimum slopes of its semistable factors respectively.

We also write the central charge $$Z_{\alpha,\beta,H}(E)\coloneqq -(H^{n-2}\ch_2(E)-\alpha H^n\ch_0(E))+iH^{n-1}\ch^{\beta H}_1(E)$$
for an object $E\in\Coh^{\beta,H}(X)$.
\begin{Rem}
The formula $\nabH$ is re-parameterized from the one in \cite[Section 4]{BMS:stabCY3s}. Let the tilt-slope function in  \cite[Section 4]{BMS:stabCY3s} be $\nabH'$, then 
$$\nabH'=\nu_{\frac{1}2(\alpha^2+\beta^2),\beta,H}-\beta.$$
In particular, an object $E\in\Coh^{\beta,H}(X)$ is $\nabH'$-tilt (semi)stable (in the sense of \cite{BMS:stabCY3s}) if and only if $\nu_{\frac{1}2(\alpha^2+\beta^2),\beta,H}$-tilt (semi)stable. We use $\nabH$ as it is more convenient to compare the slopes of objects via pictures.
\label{rem:tiltslope}
\end{Rem}

\begin{Def}
Let $E$ be an object in $D^b(X)$, we define its $H$-discriminant as
$$\db_H(E)\coloneqq (H^{n-1}\ch_1(E))^2-2H^n\ch_0(E)\cdot H^{n-2}\ch_2(E).$$
\label{def:Hdiscriminant}
\end{Def}

\begin{Thm}[Bogomolov Inequality {\cite{Bogomolov:Ineq}}, {\cite[Theorem 7.3.1]{BMT:3folds-BG}, \cite[Proposition 2.21]{Dulip-Toda:bgtostab}}] 
Let $X$ be a smooth projective variety, and $H\in NS(X)_{\R}$ an ample class. Assume that $E$ is $\nabH$-tilt semistable for some $\alpha>\frac{1}2\beta^2$, then  $\db_H(E)\geq 0$.
\label{thm:discrimbg}
\end{Thm}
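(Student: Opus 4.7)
The plan is to bootstrap from Bogomolov's classical inequality for $\mu_H$-semistable torsion-free sheaves to the tilt-semistable case, by dissecting $E$ via its cohomology in the tilted heart $\Coh^{\beta,H}(X)$. The trivial case $\ch_0(E)=0$ is immediate from $\db_H(E)=(H^{n-1}\ch_1(E))^2\geq 0$; so I assume $\ch_0(E)\neq 0$ and, after a homological shift (which leaves $\db_H$ invariant), that $E\in\Coh^{\beta,H}(X)$.

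Next I would analyse the cohomology sheaves. An object of the tilted heart has cohomology concentrated in degrees $-1$ and $0$, with $\cH^{-1}(E)\in\cF_{\beta,H}$ and $\cH^{0}(E)\in\cT_{\beta,H}$ fitting into a distinguished triangle $\cH^{-1}(E)[1]\to E\to \cH^{0}(E)$. Refining each by its $\mu_H$-Harder--Narasimhan filtration yields a list of $\mu_H$-semistable torsion-free factors $F_j$, each of which satisfies $\db_H(F_j)\geq 0$ by the classical Bogomolov theorem. The definition of the torsion pair forces every $F_j$ arising in $\cH^{0}(E)$ to have $\mu_H(F_j)>\beta$ and every $F_j$ from $\cH^{-1}(E)$ to have $\mu_H(F_j)\leq\beta$; equivalently, the classes of the factors (with the shift-appropriate sign for those coming from $\cH^{-1}(E)$) all contribute non-negatively to $H^{n-1}\ch_1^{\beta H}$, so they sit in a common half-space of the truncated Chern lattice.

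The final step is the quadratic amalgamation: assembling the individual Bogomolov inequalities into a single $\db_H(E)\geq 0$. Here I would view $\db_H$ as a Lorentzian quadratic form on the three-dimensional lattice of truncated Chern characters $(H^n\ch_0, H^{n-1}\ch_1, H^{n-2}\ch_2)$, with the Bogomolov cone $\{\db_H\geq 0\}$ as its light cone. The hypothesis $\alpha>\beta^2/2$, together with the tilt-semistability of $E$, orders the tilt-slopes $\nu_{\alpha,\beta,H}(F_j)$ in a way that confines all factor classes to a single half of the light cone, so that a polarisation/Hodge-index argument places the sum of their classes back inside the cone. I expect the main obstacle to be exactly this polarisation: the shift $[1]$ applied to $\cH^{-1}(E)$ flips the signs of $\ch_0$ and $\ch_2$, so one must carefully track how the contributions of factors across both cohomologies combine, and checking that the tilt-semistability constraint is precisely the compatibility making the mixed terms non-negative is the technical crux completed in \cite{BMT:3folds-BG,Dulip-Toda:bgtostab}.
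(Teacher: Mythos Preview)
The paper does not supply its own proof of this statement; it is quoted as a known result from the cited references, so there is no in-paper argument to compare against. Your outline is essentially the strategy carried out in \cite{BMT:3folds-BG} and \cite{Dulip-Toda:bgtostab}: pass to the $\mu_H$-Harder--Narasimhan factors of $\cH^{-1}(E)$ and $\cH^{0}(E)$, invoke the classical Bogomolov inequality on each torsion-free semistable piece, and reassemble via the bilinear form attached to $\db_H$.

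Two small points are worth flagging. First, $\cH^{0}(E)$ may have a torsion subsheaf, so not every factor you produce is torsion-free; the torsion part has $\ch_0=0$ and hence $\db_H=(H^{n-1}\ch_1)^2\geq 0$ trivially, but it must still be tracked in the cross terms. Second, your caution about the amalgamation step is well placed, and the geometry is slightly different from what the phrase ``light cone'' suggests: $\db_H$ has signature $(2,1)$ on the truncated lattice, so $\{\db_H\geq 0\}$ is the \emph{exterior} of a cone rather than a convex cone, and naive closure under addition fails. What makes the argument go through in the cited references is exactly the extra structure you point to --- the half-space constraint $H^{n-1}\ch_1^{\beta H}\geq 0$ together with the convexity of the HN polygon coming from the slope ordering --- and this is indeed the technical heart you correctly defer to \cite{BMT:3folds-BG,Dulip-Toda:bgtostab}.
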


The main goal of this paper is on the following conjectural Bogomolov-Gieseker inequality for $\nabH$-tilt semistable objects:
\begin{Con}[{\cite[Conjecture 4.1]{BMS:stabCY3s}}{\cite[Conjecture 2.7] {BMT:3folds-BG}}]
Let $X$ be a smooth projective threefold, and $H\in NS(X)_{\R}$ an ample class. Assume that $E$ is $\nabH$-tilt semistable for some $\alpha>\frac{1}2\beta^2$, then  
\begin{align}
Q_{\alpha,\beta}(E)\coloneqq (2\alpha-\beta^2)\db_H(E)+4(H\ch^{\beta H}_2(E))^2-6H^2\ch^{\beta H}_1(E)\ch^{\beta H}_3(E)\geq 0.
\label{eq:bgtilt}
\end{align}
\label{conj:4.1}
\end{Con}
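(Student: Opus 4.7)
The inequality (\ref{eq:bgtilt}) is known to fail on some blowups, so a proof of Conjecture \ref{conj:4.1} in full generality is out of reach; the realistic target on a smooth quintic $(X,H)$ is $Q_{\alpha,\beta}(E)\geq 0$ under a mild open constraint on $(\alpha,\beta)$. I would structure the argument as a four-stage descent: tilt-semistable $\Longrightarrow$ Brill-Noether stable $\Longrightarrow$ slope-stable sheaves on $X$ $\Longrightarrow$ Clifford-type bound for stable bundles on the curve $\cer$.

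At the top of the chain I would invoke the wall-crossing reduction going back to \cite{Macri:P3} and \cite[Section 5]{BMS:stabCY3s}: by continuously varying $(\alpha,\beta)$ inside $\{\alpha>\beta^2/2\}$, any tilt-semistable $E$ either already satisfies $Q_{\alpha,\beta}(E)\geq 0$ in its stability chamber, or it acquires a destabilizing subobject whose numerical data are pinned down by the Bogomolov inequality of Theorem \ref{thm:discrimbg}. Pushing all such walls to their numerical boundary isolates a single class of critical objects---the Brill-Noether stable ones, loosely those tilt-stable $E$ for which $\Hom(\cO_X,E)$ is as large as tilt-stability permits. It then suffices to verify (\ref{eq:bgtilt}) on this class, which is the content of Proposition \ref{prop:BGforbnstab} combined with the reduction of Theorem \ref{thm:bnimpliesall}.

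For a Brill-Noether stable $E$ I would follow the Fano-threefold template of \cite{Li:Fano,Dulip:Fano,MMSZ:Fano}: compare $\ch_3^{\beta H}(E)$ with $\hom(\cO_X,E)-\hom(E,\cO_X(nH))$ for a suitable twist, using Serre duality and Riemann-Roch. On a Fano threefold the needed $\Ext$-vanishings come free from the index; on the Calabi-Yau quintic they do not, so one instead \emph{estimates} $\hom(\cO_X,E)$ via the wall-crossing technique of \cite{wallcrossing-BrillNoether}, which bounds it in terms of the Jordan-H\"older factors of $E$ in a nearby tilt heart. The error term is tame enough to be absorbed provided one has a strictly stronger-than-Bogomolov bound on $H\ch_2$ of slope-stable sheaves---exactly Theorem \ref{thm:BGforslopestableX5}.

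Establishing Theorem \ref{thm:BGforslopestableX5} is itself a two-step restriction argument. Given a $\mu_H$-semistable $F$ with normalized slope in the relevant range, deform $(\alpha,\beta)$ until $F$ becomes tilt-stable, then invoke Feyzbakhsh's restriction lemma \cite{Fe16} to restrict $F$ first to a smooth surface $\ser=Q_1\cap Q_2\cap X\subset\mathbf{P}^4$ and subsequently to the curve $\cer$, preserving (semi)stability along the way. The resulting stable bundle on $\cer$ is controlled by Proposition \ref{prop:clfboundsforC225}, and transporting this back through Koszul resolutions yields the piecewise-linear bounds on $H\ch_2(F)$; the three slope-ranges $[0,\tfrac14]$, $[\tfrac14,\tfrac34]$, $[\tfrac34,1]$ correspond to which natural tilt-stability walls $F$ crosses during the deformation. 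The genuine bottleneck is the Clifford inequality for $\cer$ itself: no off-the-shelf Clifford-type result is sharp at the critical slopes $\mu\in\{5,10,30,35\}$ one needs, and a bespoke stability-condition argument on the degree-four del Pezzo surface $Q_1\cap Q_2$ containing $\cer$, in the spirit of \cite{Fe17,FL:clford}, is required. Everything above this bottleneck is by now mechanical, but the slack is tight---losing a constant at any level breaks the chain.
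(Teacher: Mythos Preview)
Your plan correctly recognizes that Conjecture \ref{conj:4.1} is not proved in the paper---it is recorded as a conjecture, with Theorem \ref{thm:boginqforx5} establishing only the quintic case under the open constraint $\alpha>\tfrac{1}{2}\beta^2+\tfrac{1}{2}(\beta-\lfloor\beta\rfloor)(\lfloor\beta\rfloor+1-\beta)$---and your four-stage descent (tilt $\Rightarrow$ Brill--Noether $\Rightarrow$ strong $\ch_2$-bound $\Rightarrow$ Clifford on $\cer$) matches the paper's logic flow exactly, including the references to \cite{BMS:stabCY3s}, \cite{wallcrossing-BrillNoether}, \cite{Fe16}, and \cite{FL:clford}.

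Two small corrections of detail. First, the intermediate surface $\ser$ is the complete intersection of \emph{one} quadric with the quintic, i.e.\ $Q\cap X\in|2H|$ on $X$; your $Q_1\cap Q_2\cap X$ is already the curve $\cer$. The restriction chain is $X\to\ser$ (Feyzbakhsh with $m=2$) followed by $\ser\to\cer$ (Feyzbakhsh with $m=2$ again on the surface). Second, the transport from the Clifford bound back to the $\ch_2$-inequality on $\ser$ (Proposition \ref{prop:bgfors25}) does not use Koszul resolutions; it is a direct Hirzebruch--Riemann--Roch computation of $\chi(\cO_{\ser},F)$ combined with the $\Hom$-vanishings $\Hom(\cO_{\ser},F(-2H))=\Hom(\cO_{\ser},F^\vee)=0$ coming from tilt-stability, and the $h^0$ and $h^2$ terms are then bounded by $h^0$ of the restrictions $F|_{\cer}$ and $F^\vee(2H)|_{\cer}$.
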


In this paper, we will prove this conjecture for smooth quintic threefolds with a little assumption on $\alpha$.

\begin{Thm}
Let $X$ be a smooth projective quintic threefold, and $H=[\mathcal O_X(1)]$. Assume that $E$ is $\nabH$-tilt semistable for some $\alpha>\frac{1}2\beta^2+\frac{1}2(\beta-\floor \beta)(\floor \beta+1-\beta)$, then the inequality (\ref{eq:bgtilt}) holds.
\label{thm:boginqforx5}
\end{Thm}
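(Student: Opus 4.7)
The plan is to follow the logic flow indicated in the introduction: assuming the slope-BG bound of Theorem \ref{thm:BGforslopestableX5} has already been established, I would deduce the tilt BG inequality (\ref{eq:bgtilt}) in two steps, bridging through the notion of Brill-Noether stability. The key observation is that the excluded locus $\alpha > \frac{1}{2}\beta^2 + \frac{1}{2}(\beta-\lfloor\beta\rfloor)(\lfloor\beta\rfloor+1-\beta)$ corresponds, under the reparameterization of Remark \ref{rem:tiltslope}, to the exterior of the semicircle $\alpha^2 + (\beta - \lfloor\beta\rfloor - \tfrac{1}{2})^2 > \tfrac{1}{4}$ in the upper half plane, which is precisely the locus on which consecutive line bundles $\cO_X(mH)$ and $\cO_X((m+1)H)$ have equal $\nu'_{\alpha,\beta,H}$-slope. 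This circle is the Brill--Noether wall, and staying strictly outside it is what makes the stability conditions ``geometric'' in the sense of \cite{BMS:stabCY3s}.

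First, I would invoke Theorem \ref{thm:bnimpliesall} to reduce the problem. Following the wall-crossing strategy of \cite{Macri:P3} and \cite[Section 5]{BMS:stabCY3s}, the walls for $\nu_{\alpha,\beta,H}$-stability are nested semicircles in the $(\beta,\alpha)$-plane, and any tilt-semistable object can be transported by deformation to one that is semistable on a specific wall. Twisting by line bundles reduces us to the fundamental domain $\beta\in [0,1)$, and the theorem converts the task into verifying $Q_{\alpha,\beta}(E)\ge 0$ for the Brill--Noether semistable objects, i.e.\ the tilt-semistable $E$ whose numerical class shares a $\nu$-slope with $\cO_X$ (equivalently $\cO_X(H)$) along the critical semicircle. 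This is the content of Proposition \ref{prop:BGforbnstab}.

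Second, for a Brill--Noether semistable $E$, I would follow the general line of attack developed for Fano threefolds in \cite{Li:Fano,Dulip:Fano,MMSZ:Fano}: bound suitable sheaf cohomologies of twists of $E$, then feed them into Hirzebruch--Riemann--Roch to extract $\ch_3$. The Calabi--Yau condition $K_X = 0$ prevents the direct Ext-vanishings available in the Fano case, so instead I would use Serre duality $\Ext^i(\cO_X, E) \cong \Ext^{3-i}(E, \cO_X)^*$ and control $\hom(\cO_X, E)$ directly. Here the estimate from \cite{wallcrossing-BrillNoether} is the model: one slightly deforms $(\alpha,\beta)$ across the Brill--Noether wall so that $\cO_X$ becomes destabilizing, computes the HN filtration of $E$ (or of a relevant twist) with respect to the nearby tilt-stability, and shows that each HN factor admits only a numerically controlled number of sections. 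The resulting estimate of $\hom(\cO_X,E)$ in terms of $H^2\ch_1^{\beta H}(E)$ and $H\ch_2^{\beta H}(E)$, combined with the refined second-Chern-character bounds of Theorem \ref{thm:BGforslopestableX5}, converts by Riemann--Roch into a lower bound on $\ch_3^{\beta H}$ of exactly the shape required by (\ref{eq:bgtilt}) along the Brill--Noether wall. Once this holds on the wall, the BG inequality propagates into the entire open region $\alpha^2 + (\beta-\lfloor\beta\rfloor - \tfrac{1}{2})^2 > \tfrac14$ by the standard wall-crossing machinery: the quadratic form $Q_{\alpha,\beta}$ respects $\mathrm{JH}$-filtrations across walls, so having $Q\ge 0$ on a dense set of walls suffices.

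The main obstacle, as in the Fano analogues, will be the $\hom(\cO_X, E)$ estimate. One must ensure that every HN factor of $E$ (at the perturbed auxiliary stability condition) has $\ch_1^{\beta H}$ small enough that the slope-BG inequality of Theorem \ref{thm:BGforslopestableX5} is sharp, and then apply Serre duality to convert this into a two-sided estimate. This is precisely where the sharper regime $|\tfrac{H^2\ch_1}{H^3\rk}|\in [0,\tfrac{1}{10}]\cup [\tfrac{9}{10},1]$ of Theorem \ref{thm:BGforslopestableX5}, with the stronger quadratic upper bound on $H\ch_2$, must be used: the HN factors arising in the Brill--Noether analysis have $\mu_H$ very close to an integer, so the linear bounds in the intermediate regime are simply not sharp enough and the refined quadratic bound is indispensable to close the argument.
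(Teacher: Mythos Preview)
Your two-step architecture is exactly the paper's: first invoke Theorem \ref{thm:bnimpliesall} to reduce (\ref{eq:bgtilt}) to the Brill--Noether case (Proposition \ref{prop:BGforbnstab}), then deduce the latter from the slope-BG bound of Theorem \ref{thm:BGforslopestableX5} via a $\hom(\cO_X,E)$ estimate and Hirzebruch--Riemann--Roch.

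The mechanism you sketch for the $\hom$ estimate, however, differs from the paper's. You propose to perturb across the Brill--Noether wall and bound sections of each HN factor of $E$. The paper instead works with the evaluation cone $\tilde E=\mathrm{Cone}(\cO_X\otimes\Hom(\cO_X,E)\to E)$: Lemma \ref{lem:extension} shows $\tilde E$ is again Brill--Noether semistable, so Theorem \ref{thm:BGforslopestableX5} applied to $\tilde E$ forces $\frac{H^2\ch_1(\tilde E)}{H^3\rk(\tilde E)}\notin(-\tfrac14,0]$, and since $\rk(\tilde E)=\rk(E)-\hom(\cO_X,E)$ this immediately bounds $\hom(\cO_X,E)$ by $\rk(E)+\tfrac25 H^2\ch_1(E)+\tfrac45 H\ch_2(E)$. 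No HN filtration of $E$ is taken; the slope-BG bound is applied once, to the single object $\tilde E$. Your route via HN factors may also work, but it is not what is done here, and your assertion that the relevant factors have $\mu_H$ ``very close to an integer'' is not how the argument runs.

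Relatedly, your description of where the sharper quadratic regime of Theorem \ref{thm:BGforslopestableX5} enters is off. In the paper it is not used to control HN factors with near-integer slope; rather, after the $\hom$ bound is fed into Riemann--Roch one obtains an explicit lower bound for $Q_{0,0}(E)$ (equations (\ref{eq:336})--(\ref{eq:339})), and the case analysis is on the slope $\frac{H^2\ch_1(E)}{H^3\rk(E)}$ of $E$ itself. The quadratic bound is needed precisely in the range $[\tfrac{10}{11},1]$ to make the expression (\ref{eq:339}) non-negative; the linear bounds suffice on $[\tfrac12,\tfrac34]$ and $[\tfrac34,\tfrac{10}{11}]$.
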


\subsection{Recollection of lemmas} Let $X$ be a smooth projective variety and $H\in NS(X)_{\R}$ be a real ample divisor class. For an object $E\in D^b(X)$, we write 
\begin{align*}
& \vb(E)\coloneqq(H^n\ch_0(E),H^{n-1}\ch_1(E),H^{n-2}\ch_2(E));\\
\text{and } & p_H(E)\coloneqq\left(\frac{H^{n-2}\ch_2(E)}{H^n\ch_0(E)},\frac{H^{n-1}\ch_1(E)}{H^n\ch_0(E)}\right)\text{, when $\vb(E)\neq 0$}.
\end{align*}
Let $\alpha,\beta\in \R$ be the parameters for tilt-slope functions, unless mentioned otherwise, we will always assume $\alpha>\frac{1}2\beta^2$.

\begin{Lem}
Let $E\in \Coh^{\beta_0,H}(X)$ be a $\nu_{\alpha_0,\beta_0,H}$-tilt stable object for some $\alpha_0>\frac{1}2\beta_0^2$, then we have the following properties.
\begin{enumerate}
\item (Openness) There exists an open set of neighborhood $U$ of $(\alpha_0,\beta_0)$ such that for any $(\alpha,\beta)\in U$, the object $E$  is $\nabH$-tilt stable.
\item (Bertram's Nested Wall Theorem) The object $E$ is $\nabH$-tilt stable for any $\{(\alpha,\beta)|\alpha>\frac{1}2\beta^2\}$ on the line through the points $(\alpha_0,\beta_0)$ and $p_H(E)$. More precisely, the object $E$ is $\nabH$-tilt stable for $(\alpha,\beta)$ such that the determinant 
$$\det \begin{pmatrix}
1 & \alpha &\beta\\
1 & \alpha_0 &\beta_0\\
H^n\ch_0(E)  & H^{n-2}\ch_2(E) & H^{n-1}\ch_1(E)
\end{pmatrix}=0.$$
The statement also holds for semistable case.
Moreover, when $X$ is a threefold, 
\begin{align}
H^{n-1}\ch_1^{\beta H}(E)Q_{\alpha_0,\beta_0}(E)=H^{n-1}\ch_1^{\beta_0 H}(E)Q_{\alpha,\beta}(E).\label{eq:Qequalontheline}
\end{align}
\item[(b$'$)] Let $F$ be an object in $\Coh^{\beta_0,H}(X)$ such that $p_H(F)$ is on the line through the points $(\alpha_0,\beta_0)$ and $p_H(E)$, then $\nu_{\alpha_0,\beta_0,H}(E)=\nu_{\alpha_0,\beta_0,H}(F).$ More precisely, the requirements on $E$ and $F$ are as follows: both $\vb(E)$ and $\vb(F)$ are not zero and 
the determinant 
$$\det \begin{pmatrix}
1 & \alpha_0 &\beta_0\\
H^n\ch_0(E)  & H^{n-2}\ch_2(E) & H^{n-1}\ch_1(E)\\
H^n\ch_0(F)  & H^{n-2}\ch_2(F) & H^{n-1}\ch_1(F)
\end{pmatrix}=0.$$
\item (Destabilizing walls) The set $\{(\alpha,\beta)\in \R^2|\alpha>\frac{1}2\beta^2, E$ is strictly $\nabH$-tilt semistable$\}$ is empty or a union of line segments and rays.
\end{enumerate}
\label{lem:nestedandopen}
\end{Lem}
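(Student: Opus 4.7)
My plan is to derive all three parts from a single geometric computation of the numerical wall locus, proving them in the order (b$'$), wall-structure, (b), (c), (a), since each step relies on the previous. Part (b$'$) comes out directly from the definition: the vanishing of the $3\times 3$ determinant says that $(1,\alpha_0,\beta_0)$, $\vb(E)$, $\vb(F)$ are linearly dependent, and writing $c\,\vb(E) + d\,\vb(F) = \lambda(1,\alpha_0,\beta_0)$ and substituting into the slope formula gives
$$H^{n-2}\ch_2(E) - \alpha_0 H^n\ch_0(E) \;=\; \tfrac{d}{\lambda}\bigl(H^{n-2}\ch_2(E)\,H^n\ch_0(F) - H^{n-2}\ch_2(F)\,H^n\ch_0(E)\bigr),$$
with an analogous identity (differing only by the overall scalar $-c/\lambda$) for $F$ and for the $\ch_1$-component; the scalars cancel in the ratio, yielding $\nu_{\alpha_0,\beta_0,H}(E) = \nu_{\alpha_0,\beta_0,H}(F)$.

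Next, I would compute the numerical wall $\{(\alpha,\beta) : \nu_{\alpha,\beta,H}(E) = \nu_{\alpha,\beta,H}(F)\}$ by cross-multiplying the two slope formulas; the $\alpha\beta$-terms cancel, so each wall is a line in the $(\alpha,\beta)$-plane, and substituting $(\alpha,\beta)=p_H(E)$ forces the numerator and denominator of $\nu_{\alpha,\beta,H}(E)$ to vanish simultaneously, so every wall for $E$ passes through $p_H(E)$. The Bogomolov inequality $\db_H(E) \ge 0$ from Theorem~\ref{thm:discrimbg} places $p_H(E)$ on or below the parabola $\alpha = \beta^2/2$, so any line $L$ through $(\alpha_0,\beta_0)$ and $p_H(E)$ meets the stability region $\{\alpha>\beta^2/2\}$ in a single open segment that avoids $p_H(E)$. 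Walls for $E$ other than $L$ itself meet $L$ only at $p_H(E)$, so modulo (a) the stability of $E$ propagates along the entire segment; this is (b). Part (c) is then immediate: in any compact subregion, the Bogomolov bound on the discriminants of potential destabilizers $F$ and $\cone(F\to E)$ leaves only finitely many numerical classes with a wall, and each such wall is a line segment, so the strictly semistable locus is a locally finite union of segments and rays. The identity \eqref{eq:Qequalontheline} is a direct expansion: restricted to $L$, the quadratic form $Q_{\alpha,\beta}(E)$ factors as $H^{n-1}\ch_1^{\beta H}(E)$ times a scalar depending only on $L$, so taking the ratio at two points of $L$ kills the scalar.

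The main obstacle is the openness statement (a). Continuity is not automatic, because the heart $\Coh^{\beta,H}(X)$ itself jumps whenever $\beta$ crosses a $\mu_H$-slope of an $H$-semistable sheaf, so new subobjects of $E$ in $\Coh^{\beta,H}(X)$ can appear as $\beta$ varies. The fix, which I would follow verbatim from the argument in \cite{BMS:stabCY3s}, combines the Bogomolov bound $\db_H(F) \ge 0$, $\db_H(\cone(F\to E)) \ge 0$ for any destabilizer $F$ with the observation that a genuine destabilization near $(\alpha_0,\beta_0)$ forces $\vb(F)$ to lie on a wall-line through $p_H(E)$ and cluster near $(\alpha_0,\beta_0)$. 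The discriminant bound makes the set of candidate $\vb(F)$ discrete, so only finitely many classes need to be excluded in any compact neighborhood of $(\alpha_0,\beta_0)$; each fails to destabilize at the central point by hypothesis, and shrinking the neighborhood finishes the argument. Because Remark~\ref{rem:tiltslope} identifies $\nu_{\alpha,\beta,H}$ with the BMS slope $\nu'$ up to an affine change of coordinates, this standard argument transfers with no modification.
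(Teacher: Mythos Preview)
Your proposal is correct and covers the same mathematical content as the paper, though the presentations differ in emphasis. The paper's proof is extremely brief: parts (a) and (c) are referred to \cite[Corollary~3.3.3]{BMT:3folds-BG} and \cite[Appendix~B]{BMS:stabCY3s}, the nested wall theorem (b) to \cite[Theorem~3.1]{Maciocia:walls} and \cite[Lemma~4.3]{BMS:stabCY3s}, and (b$'$) is dismissed as a direct computation. You instead sketch the proofs contained in those references, which is fine since you explicitly say you follow \cite{BMS:stabCY3s} for (a).

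The one place where the arguments differ in substance is the identity \eqref{eq:Qequalontheline}. The paper normalizes by formally tensoring with $\cO(mH)$ so that $H^{n-1}\ch_1(E)=0$, then writes out both sides explicitly and checks they agree using the linear relation $\beta(\alpha_0H^n\ch_0-H^{n-2}\ch_2)=\beta_0(\alpha H^n\ch_0-H^{n-2}\ch_2)$ coming from the vanishing determinant. Your factorization claim---that $Q_{\alpha,\beta}(E)$ restricted to the line $L$ equals $H^{n-1}\ch_1^{\beta H}(E)$ times a constant depending only on $L$---is in fact correct (one sees it immediately after the same normalization $\ch_1=0$, since then $Q_{\alpha,\beta}=-4\ch_2(\alpha\ch_0-\ch_2)+6\beta\ch_0\ch_3$ and the determinant relation makes $\alpha\ch_0-\ch_2$ proportional to $\beta$), but you do not justify it, and at first glance it looks implausible because $Q_{\alpha,\beta}$ is degree four in $\beta$. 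A sentence indicating the cancellation, or simply performing the paper's reduction, would close this.

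Two minor points: in (b$'$) your linear-combination argument tacitly assumes $\lambda\neq 0$; the case $\vb(E)\parallel\vb(F)$ should be noted separately (it is trivial). And your logical ordering uses (a) inside the proof of (b) while proving (a) last; this is harmless since the arguments are independent, but worth flagging.
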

\begin{proof}
The first and third statements are in \cite[Corollary 3.3.3]{BMT:3folds-BG} and also in \cite[Appendix B]{BMS:stabCY3s} with more details. The nested wall theorem is in \cite[Theorem 3.1]{Maciocia:walls} and \cite[Lemma 4.3]{BMS:stabCY3s}. As for the equation (\ref{eq:Qequalontheline}), by formally tensoring $\cO(mH)$ on $E$, we may assume that $H^{n-1}\ch_1(E)=0$. The left hand side then can be simplified as:
$$4\beta H^n\ch_0H^{n-2}\ch_2(\alpha_0H^n\ch_0-H^{n-2}\ch_2)-6\beta\beta_0 (H^n\ch_0)^2H^{n-3}\ch_3.$$
This equals the right hand side since the zero determinant implies: $$\beta (\alpha_0H^n\ch_0-H^{n-2}\ch_2)=\beta_0 (\alpha H^n\ch_0-H^{n-2}\ch_2).$$
Part (b$'$) is a direct computation by the definition of $\nabH$.
\end{proof}

The following lemma from \cite{BMS:stabCY3s} will be very useful in the technique of deforming tilt-stabilities. We list it here for the convenience of readers.

\begin{Lem}[{\cite[Corollary 3.10]{BMS:stabCY3s}}]
Let $E$ be a strictly $\nabH$-tilt semistable object with $\nabH(E)\neq +\infty$. Then for any of its Jordan-H\"older factor $E_i$ of $E$, we have
$$\db_H(E_i)\leq \db_H(E).$$
The equality holds only when $\vb(E_i)$ is proportional to $\vb(E)$ and $\db_H(E)=\db_H(E_i)=0$.  
\label{lem:discriminnatdec}
\end{Lem}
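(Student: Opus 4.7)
The plan is to reduce the statement to a Hodge-index style inequality on the two-dimensional subspace of characters with common tilt-slope. Since $E$ is strictly $\nu_{\alpha_0,\beta_0,H}$-semistable, every Jordan–Hölder factor $E_i$ is itself $\nu_{\alpha_0,\beta_0,H}$-semistable with $\nu_{\alpha_0,\beta_0,H}(E_i)=\nu_{\alpha_0,\beta_0,H}(E)$, so Theorem \ref{thm:discrimbg} yields $\db_H(E_i)\ge 0$ for each $i$. The task is therefore to prove the superadditivity $\sqrt{\db_H(v_1+v_2)}\ge \sqrt{\db_H(v_1)}+\sqrt{\db_H(v_2)}$ for characters $v_i=\vb(E_i)$ sharing the same tilt-slope, and then induct.

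First, I would normalize. Because $\db_H$, $\nu_{\alpha,\beta,H}$, and the notion of tilt-stability are all compatible with the twist $E\mapsto E\otimes\cO(mH)$ (which sends $(\alpha_0,\beta_0)\mapsto(\alpha_0,\beta_0+m)$), one may assume $\beta_0=0$ and hence $\alpha_0>0$. Write $v_i=(r_i,c_i,d_i):=(H^n\ch_0(E_i),H^{n-1}\ch_1(E_i),H^{n-2}\ch_2(E_i))$ so that $\db_H(v_i)=c_i^2-2r_id_i$ and the central charge is $Z(v_i)=-(d_i-\alpha_0 r_i)+ic_i$. The equal-slope condition $\nu_{\alpha_0,0,H}(v_i)=\nu_{\alpha_0,0,H}(E)$ (finite, by hypothesis) says all $Z(v_i)$ lie on a common open ray $\R_{>0}\cdot e^{i\theta}$ with $\theta\in(0,\pi)$, so I may write $c_i=t_i\sin\theta$ and $d_i=\alpha_0 r_i-t_i\cos\theta$ for uniquely determined $t_i>0$. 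Setting $a_i=r_i/t_i$, the Bogomolov inequality becomes $f(a_i)\ge 0$ where $f(a):=\sin^2\theta-2\alpha_0 a^2+2a\cos\theta$; in particular $a_i$ lies in the (bounded, nonempty) interval where $f\ge 0$.

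The heart of the argument is the algebraic identity
\begin{equation*}
\db_H(v_1,v_2)^2-\db_H(v_1)\db_H(v_2)=(\cos^2\theta+2\alpha_0\sin^2\theta)\,(a_1-a_2)^2\,t_1^2t_2^2,
\end{equation*}
where $\db_H(v_1,v_2):=c_1c_2-r_1d_2-r_2d_1$ is the bilinear polarization so that $\db_H(v_1+v_2)=\db_H(v_1)+\db_H(v_2)+2\db_H(v_1,v_2)$. This follows from a direct expansion: with $g(a_1,a_2):=t_1^{-1}t_2^{-1}\db_H(v_1,v_2)=\sin^2\theta-2\alpha_0a_1a_2+(a_1+a_2)\cos\theta$ and $f(a)$ as above, one checks $g(a_1,a_2)^2-f(a_1)f(a_2)=(\cos^2\theta+2\alpha_0\sin^2\theta)(a_1-a_2)^2$. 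Since $\alpha_0>0$, the coefficient $\cos^2\theta+2\alpha_0\sin^2\theta$ is strictly positive, so $|\db_H(v_1,v_2)|\ge\sqrt{\db_H(v_1)\db_H(v_2)}$.

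It remains to fix the sign of $\db_H(v_1,v_2)$. On the diagonal $a_1=a_2$ one has $g(a,a)=f(a)\ge 0$, and on the compact convex set $\{(a_1,a_2):f(a_1),f(a_2)\ge 0\}$ the function $g$ cannot vanish except where $f(a_1)f(a_2)=0$ and $a_1=a_2$ (by the identity above); hence by continuity $g\ge 0$ throughout, so $\db_H(v_1,v_2)\ge 0$. This yields $\db_H(v_1+v_2)\ge(\sqrt{\db_H(v_1)}+\sqrt{\db_H(v_2)})^2$, and iterating over a JH filtration gives $\db_H(E)\ge\db_H(E_i)$ for each $i$. For the equality case, $\db_H(E)=\db_H(E_i)$ forces the remaining factors $E_j$ to satisfy $\db_H(E_j)=0=\db_H(v_i,v_j)$, and the identity then forces $a_i=a_j$, which means $v_j$ is proportional to $v_i$; combined with $\db_H(v_i)=0$ this gives the stated equality characterization. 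The main obstacle is the sign determination of $\db_H(v_1,v_2)$: the identity alone only bounds its absolute value, and one must invoke the positivity of $\cos^2\theta+2\alpha_0\sin^2\theta$ (equivalent to the condition $\alpha_0>\beta_0^2/2$ after unnormalizing) together with a connectedness argument from the diagonal.
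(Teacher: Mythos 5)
The paper does not prove this lemma itself; it is quoted verbatim from \cite[Corollary 3.10]{BMS:stabCY3s}, so the only meaningful comparison is with the argument there. Your proposal is a correct, self-contained proof and follows the same underlying strategy as BMS's Lemma 3.9 --- superadditivity of $\sqrt{\db_H}$ on numerical classes whose central charges lie on a common ray --- but you execute it by an explicit parametrization of that ray and a direct polynomial identity, rather than by the signature analysis of the quadratic form $\db_H$ restricted to the plane spanned by the two classes, which is how BMS argue; your version is more computational but also more elementary. The identity $g(a_1,a_2)^2-f(a_1)f(a_2)=(\cos^2\theta+2\alpha_0\sin^2\theta)(a_1-a_2)^2$ checks out, the connectedness argument pinning down the sign of the polarization is sound (the zero locus of $g$ on the product of intervals is contained in the two diagonal corner points where $f$ vanishes, so $g\geq 0$ follows from $g(0,0)=\sin^2\theta>0$), and the induction over the Jordan--H\"older filtration and the equality analysis are correct (in the equality case, $a_i=a_j$ together with $f(a_j)=0$ does force $f(a_i)=0$, which is the step you left implicit). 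One small repair is needed: the normalization $\beta_0=0$ cannot be achieved by tensoring with $\cO(mH)$ when $\beta_0\notin\Z$. Either carry $\beta_0$ through --- the identical computation works with $2\alpha_0$ replaced by $2\alpha_0-\beta_0^2$ and $\cos\theta$ by $\cos\theta+\beta_0\sin\theta$, the key coefficient becoming $(\cos\theta+\beta_0\sin\theta)^2+(2\alpha_0-\beta_0^2)\sin^2\theta$, which is positive precisely because $\alpha_0>\tfrac12\beta_0^2$ --- or observe that the formal twist $v=(r,c,d)\mapsto(r,\,c-\beta_0 r,\,d-\beta_0 c+\tfrac{\beta_0^2}{2}r)$ on numerical classes preserves $\db_H$ and converts $\nu_{\alpha_0,\beta_0,H}$ into $\nu_{\alpha_0-\beta_0^2/2,\,0,\,H}$ up to an additive constant, so equal-slope classes stay equal-slope. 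With that adjustment the proof is complete.
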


\begin{Def}
We call an object $E$ \emph{Brill-Noether stable} if there exists an open subset $U_{\delta}\coloneqq \{ (\alpha,\beta)|\alpha^2+\beta^2<\delta,\alpha>\frac{1}2\beta^2\}$ for some $\delta>0$ such that $E$ is $\nabH$-tilt stable for every $(\alpha,\beta)\in U_{\delta}$. 

We call an object $E$ \emph{Brill-Noether semistable}  if there exists $\delta>0$ such that $E$ is $\nu_{\alpha,0,H}$-tilt semistable for every $0<\alpha<\delta$.

For an object $E\in\Coh^{0,H}(X)$, we denote its Brill-Noether slope by $$\nu_{BN}(E)\coloneqq \begin{cases}
\frac{H^{n-2}\ch_2(E)}{H^{n-1}\ch_1(E)}, & \text{ when } H^{n-1}\ch_1(E)\neq 0;\\
+\infty, & \text{ when } H^{n-1}\ch_1(E)= 0.
\end{cases}$$
\label{def:bnstab}
\end{Def}

On may think the Brill-Noether stability condition also as the `weak stability condition' on the heart $\Coh^{0,H}(X)$ whose central charge is given by  $Z=-H^{n-2}\ch_2+i H^{n-1}\ch_1$. By Lemma \ref{lem:nestedandopen}, an object $E$ with $H^{n-2}\ch_2(E)\neq 0$ is Brill-Noether stable if and only if it is $\nu_{\alpha,\beta,H}$-tilt stable for some $(\alpha,\beta)$ proportional to $p_H(E)$. The Brill-Noether semistability of $E$ implies that $E$ is $\nu_{\alpha,\beta,H}$-tilt semistable for some $(\alpha,\beta)$ proportional to $p_H(E)$.

\begin{Lem} [{\cite[Lemma 6.5]{wallcrossing-BrillNoether}}]
Assume that $E\in \Coh^{0,H}(X)$ is Brill-Noether stable. If $\nu_{BN}(E)>0$, let $W\subset \Hom(\mathcal O_X,E)$ be a subspace, then the object 
\begin{align*}
\tilde E\coloneqq Cone(\mathcal O_X\otimes W\xrightarrow{can} E)
\end{align*}  is in $\Coh^{0,H}(X)$ and Brill-Noether semistable.

If $\nu_{BN}(E)<0$, let $W'\subset (\Hom(E[-1],\mathcal O_X))^*$ be a subspace, then the object 
\begin{align*}
\tilde E'\coloneqq Cone(E[-1]\xrightarrow{can} \mathcal O_X\otimes W')
\end{align*}
is in $\Coh^{0,H}(X)$ and Brill-Noether semistable.
\label{lem:extension}
\end{Lem}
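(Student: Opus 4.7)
The plan is in two parts: first show $\tilde E \in \Coh^{0,H}(X)$ via a cohomology computation in the tilted $t$-structure, then prove Brill-Noether semistability by contradiction, splitting into cases based on the projection to $\cO_X[1] \otimes W$.

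For the first part, I would note that $\mu_H(\cO_X) = 0$ places $\cO_X$ in $\cF_{0,H}$, so $\cO_X[1] \otimes W$ lies in the heart $\Coh^{0,H}(X)$. Applying the cohomology functor of $\Coh^{0,H}(X)$ to the defining triangle $\cO_X \otimes W \to E \to \tilde E \to (\cO_X \otimes W)[1]$, the object $\cO_X \otimes W$ contributes only in tilted degree $+1$ (as $\cO_X[1] \otimes W$), while $E$ contributes only in degree $0$. The long exact sequence then forces $\cH^i(\tilde E) = 0$ for $i \neq 0$ and yields a short exact sequence
\begin{equation*}
0 \to E \to \tilde E \to \cO_X[1] \otimes W \to 0
\end{equation*}
in $\Coh^{0,H}(X)$. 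Since $\ch_i(\cO_X) = 0$ for $i \geq 1$, the characters $H^{n-1}\ch_1$ and $H^{n-2}\ch_2$ agree between $E$ and $\tilde E$, so $\nu_{BN}(\tilde E) = \nu_{BN}(E) > 0$, while $\ch_0$ drops by $\dim W$; a direct computation gives $\nu_{\alpha,0,H}(\tilde E) - \nu_{\alpha,0,H}(E) = \alpha H^n \dim W / H^{n-1}\ch_1(E) > 0$ for every $\alpha > 0$.

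For the semistability, suppose for contradiction that $\tilde E$ is not $\nu_{\alpha,0,H}$-tilt semistable for arbitrarily small $\alpha > 0$. By openness (Lemma \ref{lem:nestedandopen}(a)) and the wall-and-chamber structure (c), I can fix a single destabilizing subobject $A \hookrightarrow \tilde E$ in $\Coh^{0,H}(X)$ valid for all small $\alpha$. Form the composition $\phi \colon A \hookrightarrow \tilde E \twoheadrightarrow \cO_X[1] \otimes W$ and set $B := \im \phi$, $K := \ker \phi$ (in the heart), so that $0 \to K \to A \to B \to 0$ and $K$ is a subobject of $E$. If $B = 0$ then $A = K \subseteq E$, and the strict Brill-Noether stability of $E$ gives $\nu_{\alpha,0,H}(A) < \nu_{\alpha,0,H}(E) < \nu_{\alpha,0,H}(\tilde E)$, contradicting the choice of $A$. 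If $B \neq 0$, then $B$ is a non-zero subobject of $\cO_X[1] \otimes W$, whose structure is controlled by its standard cohomology: $\cH^{-1}(B)$ is a subsheaf of $\cO_X^{\oplus \dim W}$ lying in $\cF_{0,H}$ and $\cH^0(B) \in \cT_{0,H}$. Moreover, the connecting map $\cO_X[1] \otimes W \to E[1]$ is the shift of the evaluation map $\cO_X \otimes W \to E$, which is non-trivial since $W \subseteq \Hom(\cO_X, E)$ consists of honest non-zero sections.

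The main obstacle is Case 2: I need to show that for every non-zero $B$ arising as the image of such an $A$, the subobject $K \hookrightarrow E$ forced by the non-vanishing of the connecting map has phase low enough that the mixed phase of $A$ does not exceed the phase of $\tilde E$ as $\alpha \to 0^+$. I expect this to follow from Bogomolov's inequality applied to $\cH^{-1}(B) \subseteq \cO_X^{\oplus \dim W}$, bounding the Chern characters of $B$, together with the strict Brill-Noether stability of $E$ applied to the proper subobject $K$; the non-triviality of the evaluation pins down which subobjects of the quotient actually lift. The symmetric statement for $\nu_{BN}(E) < 0$ is then handled by the dual construction, applying the same argument to $\tilde E'$ viewed in the corresponding shifted heart and using the analogous stability of $E[-1]$.
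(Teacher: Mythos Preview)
Your first part is fine and matches the paper: the short exact sequence $0\to E\to\tilde E\to\cO_X[1]\otimes W\to 0$ in $\Coh^{0,H}(X)$ is exactly what the paper uses.

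For semistability, however, your route diverges from the paper's and your Case~2 has a real gap. The paper does \emph{not} stay at $\beta=0$. It picks $(\alpha,\beta)$ on the line through the origin and $p_H(E)$ with $\alpha>\tfrac12\beta^2$; there both $E$ and $\cO_X[1]$ are $\nu_{\alpha,\beta,H}$-stable with the \emph{same} slope, so $\tilde E$ is automatically $\nu_{\alpha,\beta,H}$-semistable. Any destabilisation near the origin then has to occur across this very line, so the destabilising quotient $Q$ is $\nu_{\alpha,\beta,H}$-semistable of that same slope. From $\Hom(\cO_X[1],Q)=0$ one gets $E\hookrightarrow Q$, and then $\tilde E/E=\cO_X[1]\otimes W$ forces $K:=\ker(\tilde E\to Q)$ and $Q/E$ to have $\vb=(\ast,0,0)$; by \cite[Corollary~3.11(c)]{BMS:stabCY3s} they are direct summands $\cO_X[1]^{\oplus j}$. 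Since the canonical evaluation makes $\Hom(\cO_X[1],\tilde E)=0$, the kernel $K$ must vanish, a contradiction.

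Your plan tries to do the comparison entirely at $\beta=0$ via a destabilising \emph{subobject} $A$. In Case~2 one can indeed show $B\cong\cO_X[1]^{\oplus k}$ (because $\cO_X[1]$ is simple in $\Coh^{0,H}(X)$), and the extension class of $\tilde E$ then forces the corresponding $k$-dimensional $V\subset W$ to land in $\Hom(\cO_X,K)$. But after that you still need
\[
\nu_{\alpha,0,H}(A)\;\le\;\nu_{\alpha,0,H}(\tilde E)\quad\text{for small }\alpha,
\]
with $H^{n-1}\ch_1(A)=H^{n-1}\ch_1(K)$ and $\ch_0(A)=\ch_0(K)-k$. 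When $\nu_{BN}(K)=\nu_{BN}(E)$ (allowed by strict stability for positive $\alpha$), this reduces to an inequality comparing $k/H^{n-1}\ch_1(K)$ with $\dim W/H^{n-1}\ch_1(E)$ plus the $O(\alpha)$ gap from stability of $E$; neither Bogomolov on $\cH^{-1}(B)$ nor BN-stability of $E$ gives you this. So the ``expectation'' in your last paragraph is where the argument breaks. The paper's trick of moving to the line through $p_H(E)$ is precisely what replaces this missing estimate: it makes $\tilde E$ semistable for free and turns the problem into identifying the JH factors on that wall, which is controlled by $\db_H=0$ rather than by a delicate slope inequality.
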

\begin{proof}
We prove the case when $\nu_{BN}(E)>0$, the other case can be proved in a similar way. Note that $\tilde E$ is the canonical extension $$0\rightarrow E\rightarrow \tilde E\rightarrow \mathcal O_X[1]\otimes W\rightarrow 0$$
in $\Coh^{0,H}(X)$. In the case that $\nu_{BN}(E)=+\infty$, for any $\alpha>0$, both $E$ and $\mathcal O_X[1]\otimes W$ are $\nu_{\alpha,0,H}$-tilt semistable with the same slope $+\infty$. Any extended object from them, especially $\tilde E$, is also $\nu_{\alpha,0,H}$-tilt semistable with  slope $+\infty$. 

We may now assume $H^{n-1}\ch_1(E)\neq 0$, then there exists points $(\alpha,\beta)$  proportional to $p_H(E)$ such that $\alpha>\frac{1}2\beta^2$.  For any such $(\alpha,\beta)$, both $\mathcal O_X[1]$ and $E$ are $\nabH$-tilt stable and $\nabH(E)=\nabH(\mathcal O_X[1])$. Their extension $\tilde{E}$ is $\nabH$-tilt semistable. If $\tilde{E}$ is not $\nu_{\epsilon,0,H}$-tilt stable for sufficiently small $\epsilon>0$, then the destabilizing quotient object $\tilde{E}\twoheadrightarrow Q$ in $\Coh^{0,H}(X)$ would necessarily be $\nabH$-tilt semistable with the same slope as $\nabH(\mathcal O_X[1])=\nabH(E)$. Note that $\mathcal O_X[1]$ is $\nu_{\epsilon,0,H}$-stable with slope $$\nu_{\epsilon,0,H}(\mathcal O_X[1])=+\infty>\nu_{\epsilon,0,H}(\tilde E)>\nu_{\epsilon,0,H}(Q),$$ we have $\Hom(\mathcal O_X[1],Q)=0$. Therefore, we must have $\Hom(E,Q)\neq 0$. 

Since $E$ is $\nabH$-tilt stable and $Q$ is $\nabH$-tilt semistable with the same slope, the object $E$ has to be a subobject of $Q$ in $\Coh^{\beta,H}(X)$. Denote the  kernel of $\tilde{E}\twoheadrightarrow  Q$ by $K$. We then have the short exact sequence 
$$0\rightarrow K \rightarrow \tilde{E}/E\rightarrow Q/E\rightarrow 0$$
in $\Coh^{\beta,H}(X)$. By choosing sufficiently small $\beta>0$,  we have $H^{n-1}\ch_1(Q/E)$, $H^{n-1}\ch_1(K)\geq 0$. Note that $\tilde E/E\simeq \cO_X[1]\otimes W$ and $H^{n-1}\ch_1(\mathcal O_X[1])=0$, we must have $H^{n-1}\ch_1(Q/E)=0$. Since $\nabH(Q/E)=\nabH(E)=\nabH(\mathcal O_X[1])$, we have $\vb(Q/E)=(H^n\ch_0(Q/E),0,0)$ and $\db_H(Q/E)=0$. By \cite[Corollary 3.11(c)]{BMS:stabCY3s}, both $Q/E$ and $K$ must be some direct summands of $\mathcal O_X[1]$. By the definition of $\tilde{E}$, there is no non-zero map from $K$ to $\tilde{E}$. Hence,  $\tilde{E}$ is $\nu_{\epsilon,0,H}$-tilt stable for sufficiently small $\epsilon>0$.
\end{proof}

\section{Proof for the main result}\label{sec:main}
The goal of this section is to prove the inequality in Theorem \ref{thm:boginqforx5} with the assumption of Theorem \ref{thm:BGforslopestableX5}. Following the idea in \cite[Section 5]{BMS:stabCY3s}, we first reduce the inequality for every tilt semistable objects to Brill-Noether stable objects.

\begin{Prop}
Let $X$ be a smooth projective quintic threefold, and $H=[\mathcal O_X(1)]$. Assume that $E\in \Coh^{0,H}(X)$ is Brill-Noether stable and $\nu_{BN}(E)\in [-\frac{1}2,\frac{1}2]$, then 
$$Q_{0,0}(E)\coloneqq 4(H\ch_2(E))^2-6H^2\ch_1(E)\ch_3(E)\geq 0.$$
\label{prop:BGforbnstab}
\end{Prop}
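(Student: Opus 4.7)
The strategy is proof by contradiction along the Brill-Noether/wall-crossing lines of \cite{wallcrossing-BrillNoether}, combined with the Fano-threefold approach of \cite{Li:Fano,MMSZ:Fano} but adapted to the Calabi-Yau setting where higher $\Ext$-vanishings are not free. First I would reduce by symmetry: since the derived dual functor $\RlHom(-,\cO_X)$ (with an appropriate shift) preserves Brill-Noether stability on a strict CY3 and sends $\nu_{BN}$ to its negative, it suffices to treat $\nu_{BN}(E)\in[0,\tfrac12]$. Writing $r=\ch_0(E)$, $c=H^2\ch_1(E)$, $d=H\ch_2(E)$, $e=\ch_3(E)$, and disposing of the degenerate case $c=0$ via the classical Bogomolov inequality (Theorem \ref{thm:discrimbg}), I would assume $c,d>0$ with $d/c\in[0,\tfrac12]$ and $e>\tfrac{2d^2}{3c}$ for contradiction.

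Next, I would apply Lemma \ref{lem:extension} in both directions. With $h^0:=\hom(\cO_X,E)$, form $\tilde E:=\mathrm{Cone}(\cO_X\otimes\Hom(\cO_X,E)\to E)\in\Coh^{0,H}(X)$, which is Brill-Noether semistable with $\ch_0(\tilde E)=r-h^0$ and the same $c,d$. By Serre duality on the CY3, $\ext^3(\cO_X,E)=\hom(E,\cO_X)$, and the second half of Lemma \ref{lem:extension} similarly produces a BN-semistable $\tilde E'$ with $\ch_0(\tilde E')=r+h^3$ and unchanged $c,d$, where $h^3:=\ext^3(\cO_X,E)$. I would then invoke Theorem \ref{thm:BGforslopestableX5} on the slope-semistable factors occurring in the Harder-Narasimhan filtrations of $\cH^{-1}(\tilde E)$ and $\cH^0(\tilde E)$, and analogously for $\tilde E'$, to extract upper bounds $h^0\leq U_1(r,c,d)$ and $h^3\leq U_2(r,c,d)$. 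The essential point is that Bogomolov $\db_H\geq 0$ alone only yields the wrong-sign estimate $h^0\geq r-c^2/(10d)$; the sharper piecewise inequality of Theorem \ref{thm:BGforslopestableX5}, when imposed on the allowed value of $\mu_H(\tilde E)=c/(5(r-h^0))\in[-1,1]$, is exactly what flips the direction and produces the required upper bound.

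Finally, Riemann-Roch on the quintic gives $\chi(\cO_X,E)=e+\tfrac{5c}{6}$. Expanding $\chi=h^0-h^1+h^2-h^3$ with $h^2=\ext^1(E,\cO_X)$ by Serre duality, and absorbing $h^1,h^2$ either by iterating the cone construction on suitable shifts of $\cO_X$ or by controlling the wall at which $\cO_X[1]$ destabilizes $\tilde E$, I would conclude $e+\tfrac{5c}{6}\leq U_1+U_2$. Combined with the contradictory assumption $e>\tfrac{2d^2}{3c}$ and $d/c\in[0,\tfrac12]$, this yields a contradiction after a case analysis matching the piecewise regimes of Theorem \ref{thm:BGforslopestableX5}.

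\textbf{Main obstacle.} The technical heart is showing that the bounds $U_1,U_2$ coming from the piecewise form of Theorem \ref{thm:BGforslopestableX5} are strong enough to beat the threshold $\tfrac{2d^2}{3c}$ \emph{after} subtracting the Riemann-Roch correction $\tfrac{5c}{6}$. In the Fano case this is easy because $\ext^{\geq 1}$ vanishes by Kodaira-type positivity; in the CY3 case one must track $h^1,h^2$ directly, and the worst regimes (near $|\mu_H(\tilde E)|=\tfrac14$ or $\tfrac34$) are precisely where Theorem \ref{thm:BGforslopestableX5} only marginally improves on Bogomolov, so the estimate requires tight bookkeeping across all piecewise regions of the inequality.
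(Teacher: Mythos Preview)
Your skeleton is correct and matches the paper: reduce by derived duality to $\nu_{BN}(E)\in(0,\tfrac12]$, form the cone $\tilde E=\mathrm{Cone}(\cO_X\otimes\Hom(\cO_X,E)\to E)$ via Lemma~\ref{lem:extension}, apply Theorem~\ref{thm:BGforslopestableX5} to the Brill--Noether semistable object $\tilde E$ to bound $h^0$, and finish with Hirzebruch--Riemann--Roch. But you have misidentified the main obstacle, and your proposed workaround for it is where the argument would stall.

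The point you are missing is that $\ext^2(\cO_X,E)$ and $\ext^3(\cO_X,E)$ both vanish automatically once $\nu_{BN}(E)>0$, with no Kodaira-type input. Choose $(\alpha,\beta)$ with $0<\alpha/\beta<\nu_{BN}(E)$ and $E$ $\nu_{\alpha,\beta,H}$-stable; then $\nu_{\alpha,\beta,H}(\cO_X[1])=\alpha/\beta<\nu_{\alpha,\beta,H}(E)$, so $\Hom(E,\cO_X[1])=0$, and by Serre duality on the Calabi--Yau threefold this kills $\ext^2(\cO_X,E)$. Likewise $\Hom(E,\cO_X)=\Hom(E,\cO_X[1][-1])=0$ because both $E$ and $\cO_X[1]$ lie in the heart $\Coh^{\beta,H}(X)$, so $\ext^3(\cO_X,E)=0$. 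Combined with the negative-degree vanishing $\Hom(\cO_X[1],E[i])=0$ for $i\leq -1$, one gets $\chi(\cO_X,E)\leq h^0$ directly. Your plan to bound $h^3$ by a second cone construction and then ``absorb $h^1,h^2$ by iterating the cone construction on suitable shifts'' is therefore unnecessary, and as stated it is not clear it would work: without the vanishing of $h^2$ you only get $\chi\leq h^0+h^2$, and there is no evident mechanism to control $h^2$ by further cones with $\cO_X$. The paper then feeds the single bound $h^0\leq \rk(E)+\tfrac25 H^2\ch_1(E)+\tfrac45 H\ch_2(E)$ (extracted from the piecewise Theorem~\ref{thm:BGforslopestableX5} applied to $\mu_H(\tilde E)$) into $\chi(E)=\ch_3(E)+\tfrac56 H^2\ch_1(E)\leq h^0$, and closes with a case split on $\tfrac{H^2\ch_1(E)}{H^3\rk(E)}$.

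Two smaller points. First, the case $\nu_{BN}(E)=0$ (i.e.\ $H\ch_2(E)=0$) genuinely needs a separate argument: here $Q_{0,0}(E)=-6H^2\ch_1(E)\ch_3(E)$ and one must show $\ch_3(E)\leq 0$, which the paper does by a limiting wall argument bounding both $h^0$ and $h^2$ and then applying HRR; your proposal skips this. Second, the derived dual $E^\vee[1]$ is not literally Brill--Noether stable but only fits into a triangle $\bar E\to E^\vee[1]\to T_0[-1]$ with $\bar E$ Brill--Noether semistable and $T_0$ zero-dimensional (\cite[Proposition 5.1.3(b)]{BMT:3folds-BG}); this is enough for the reduction, but your statement that duality ``preserves Brill--Noether stability'' is slightly too strong.
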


\begin{Thm}[{\cite[Theorem 5.4]{BMS:stabCY3s}}]
Proposition \ref{prop:BGforbnstab} implies Theorem \ref{thm:boginqforx5}.
\label{thm:bnimpliesall}
\end{Thm}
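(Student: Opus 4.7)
The plan is to adapt the reduction strategy of \cite[Theorem 5.4]{BMS:stabCY3s} to the quintic. Given a $\nabH$-tilt semistable $E$ at parameters $(\alpha_0, \beta_0)$ satisfying the stated constraint, I would establish $Q_{\alpha_0, \beta_0}(E) \geq 0$ by deforming the parameters along the Bertram line through $(\alpha_0, \beta_0)$ and $p_H(E)$. By Lemma \ref{lem:nestedandopen}, $E$ remains tilt (semi)stable along this line until a wall is crossed, and by \eqref{eq:Qequalontheline} the sign of $Q_{\alpha, \beta}(E)$ is preserved on it. Hence it suffices to verify $Q \geq 0$ at any convenient point of this line in the stability chamber.

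The argument proceeds by induction on the $H$-discriminant $\db_H(E)$. Using the autoequivalences $\blank \otimes \cO_X(kH)$ and, since $X$ is Calabi--Yau, the derived dual, one can normalize so that the normalized slope of $E$ lies in the fundamental interval $[-\tfrac{1}{2}, \tfrac{1}{2}]$. If $E$ becomes strictly semistable at some point of the deformation line, each Jordan--H\"older factor $E_i$ satisfies $\db_H(E_i) \leq \db_H(E)$ by Lemma \ref{lem:discriminnatdec}, with strict inequality outside the degenerate case where $\vb(E_i)$ is proportional to $\vb(E)$ and both discriminants vanish. The inductive hypothesis yields $Q(E_i) \geq 0$ for each factor, and a Cauchy--Schwarz-type estimate on the quadratic form $Q$ (as in \cite[Lemma 2.3]{BMS:stabCY3s}) then propagates $Q_{\alpha, \beta}(E) \geq 0$.

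If no wall is encountered along the deformation, the line can be followed until it reaches the boundary of the stability region near the parabola $\alpha = \tfrac{1}{2}\beta^2$, and there $E$ becomes Brill--Noether stable. The parameter constraint $\alpha > \tfrac{1}{2}\beta^2 + \tfrac{1}{2}(\beta - \lfloor \beta\rfloor)(\lfloor \beta\rfloor+1-\beta)$ is precisely the condition that $(\alpha_0, \beta_0)$ lies above the secant of this parabola between consecutive integer points; since $p_H(E)$ lies on or below the parabola (by Bogomolov, Theorem \ref{thm:discrimbg}), convexity guarantees that the Bertram line exits through the parabolic arc over a $\beta$-interval of length at most one. After twisting by some $\cO_X(-nH)$ we may assume $\nu_{BN}(E) \in [-\tfrac{1}{2}, \tfrac{1}{2}]$, so Proposition \ref{prop:BGforbnstab} gives $Q_{0,0}(E) \geq 0$; transporting this back via \eqref{eq:Qequalontheline} completes the proof.

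The main obstacle is the Cauchy--Schwarz propagation of $Q \geq 0$ across walls, together with the careful matching of the parameter constraint to the admissible range $\nu_{BN} \in [-\tfrac{1}{2}, \tfrac{1}{2}]$ of Proposition \ref{prop:BGforbnstab}. Both are established in \cite[Section 5]{BMS:stabCY3s}; the quintic-specific input is merely to verify that no additional walls or degenerate strata obstruct the reduction, and that the Calabi--Yau derived-dual autoequivalence interacts correctly with the sign conventions in $Q$ and $\db_H$.
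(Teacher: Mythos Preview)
Your proposal is correct and follows essentially the same route as the paper's proof, which also adapts the reduction of \cite[Theorem 5.4]{BMS:stabCY3s}: take a putative counterexample of minimal $\db_H$, use the parameter constraint to find an integer $\beta_0$ on the Bertram wall so that after twisting one lands in the range $\nu_{BN}\in[-\tfrac12,\tfrac12]$, and then push along $\beta=0$ down to the Brill--Noether limit, invoking \cite[Lemma A.6]{BMS:stabCY3s} at each wall to pass $Q\geq 0$ through Jordan--H\"older filtrations. Two minor remarks: the paper phrases the argument as a minimal-counterexample rather than an induction, and it does not use the derived dual here (that autoequivalence only enters later, in the proof of Proposition~\ref{prop:BGforbnstab}); also the relevant propagation lemma in \cite{BMS:stabCY3s} is Lemma~A.6, not Lemma~2.3.
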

\begin{proof}
Suppose Theorem \ref{thm:boginqforx5} does not hold, then by Theorem \ref{thm:discrimbg}, there exists a $\nabH$-tilt semistable object $E\in \Coh^{\beta,H}(X)$ violating inequality (\ref{eq:bgtilt}) with the minimum $\db_H$. Note that the minimum $\db_H$ is by considering all $(\alpha,\beta)$ such that $\alpha>\frac{1}2\beta^2+\frac{1}2(\beta-\floor \beta)(\floor \beta+1-\beta)$ and every $\nabH$-tilt semistable $E$ such that $Q_{\alpha,\beta}(E)<0$. By \cite[Lemma 5.6]{BMS:stabCY3s}, we may assume $\db_H(E)>0$. We may also assume $H^2\ch_1^{\beta H}(E)>0$, since otherwise $H^2\ch_1^{\beta H}(E)=0$ and the inequality (\ref{eq:bgtilt}) holds automatically.

\begin{figure}
\begin{center}
\scalebox{0.8}{

\begin{tikzpicture}[x=30pt,y=30pt]
\tikzset{%
    add/.style args={#1 and #2}{
        to path={%
 ($(\tikztostart)!-#1!(\tikztotarget)$)--($(\tikztotarget)!-#2!(\tikztostart)$)%
  \tikztonodes},add/.default={.2 and .2}}
}

\newcommand\XA{1}

\draw[->] (-2,0) -- (5.5,0)node[above left] {$\frac{H\ch_2}{H^3\ch_0}$};

\draw[->] (0,-3.5)-- (0,0) node [right] {O} --  (0,3.5) node[above left] {$\frac{H^2\ch_1}{H^3\ch_0}$};

\draw[rotate=-90] (0,0) parabola (3.2,5.12);
\draw[rotate=-90] (0,0) parabola (-3.2,5.12);

\coordinate (O) at (0,0);
\coordinate(O1) at (0.5*\XA,1);
\coordinate(O2) at (2,2);
\coordinate(O-1) at (0.5*\XA,-1);
\coordinate(O-2) at (2,-2);
\coordinate(O-3) at (4.5,-3);
\coordinate(O3) at (4.5,3);

\coordinate(E) at (2.1,2.4);
\coordinate(AB) at (2.4,-1);
\coordinate(A0) at(2.23,1);

\node at (O1) {$\bullet$};
\node at (O) {$\bullet$};
\node at (O2) {$\bullet$};
\node at (O-1) {$\bullet$};
\node at (O-2) {$\bullet$};
\node at (O-3) {$\bullet$};
\node at (O3) {$\bullet$};
\node at (E) {$\bullet$};
\node at (AB) {$\bullet$};
\node at (A0) {$\bullet$};
\node [right] at (A0) {$\beta_0=1$};
\node[right] at (O1) {$(0.5,1)$};
\node[above] at (E) {$p_H(E)$};
\node[right] at (AB) {$(\alpha,\beta)$};
\node at (2.68,-1.8) {$W$};
\draw[dashed] (O3)--(O2)--(O1)--(O)--(O-1)--(O-2)--(O-3);

\draw [add= -1 and 1.5] (O1) to (O2) node[above]{$\nu_{BN}(-\otimes \cO(-H))=\frac{1}2$};
\draw [add= -1 and 2.5] (O) to (O1) node[above]{$\nu_{BN}=\frac{1}2$};
\draw [add= -1 and 3.5] (O-1) to (O) node[above left]{$\nu_{BN}(-\otimes \cO(H))=\frac{1}2$};
\draw [add= -1 and 1.5] (O-2) to (O-1) node[left]{$\nu_{BN}(-\otimes \cO(2H))=\frac{1}2$};
\draw [add= -1 and 1.5] (O-3) to (O-2) node[left]{$\nu_{BN}(-\otimes \cO(3H))=\frac{1}2$};
\draw [add= -0.1 and 0.37] (E) to (AB);
\draw [add= 0.3 and -0.9,dashed] (E) to (AB);
\draw [add= -1.37 and 0.8,dashed] (E) to (AB);

\end{tikzpicture}
}
\end{center}
\caption{The condition $\alpha>\frac{1}2\beta^2+\frac{1}2(\beta-\floor \beta)(\floor \beta+1-\beta)$ is equivalent to say that the point $(\alpha,\beta)$ is to the right of the dashed lines.} \label{fig:nuBNofE}
\end{figure}

Consider the wall $W$ through $(\alpha,\beta)$ and $p_H(E)$:
$$W\coloneqq \{(\alpha',\beta')|\alpha'>\frac{1}2\beta'^2;(\alpha',\beta'), (\alpha,\beta)\text{ and } p_H(E)\text{ are collinear}\}.$$

 For any $(\alpha',\beta')$ in $W$, by Lemma \ref{lem:nestedandopen}, the object $E$ is $\nu_{\alpha',\beta',H}$-tilt semistable. By Lemma \ref{lem:nestedandopen} part (b), we have $Q_{\alpha',\beta'}(E)<0$.
By the assumption that $\alpha>\frac{1}2\beta^2+\frac{1}2(\beta-\floor \beta)(\floor \beta+1-\beta)$, the wall $W$ contains at least one $(\alpha_0,\beta_0)$ such that $\beta_0$ is an integer.

Moreover, we can choose the integer $\beta_0$ such that 
$$\frac{H\ch^{\beta_0H}_2(E)}{H^2\ch^{\beta_0H}_1(E)}\in[-\frac{1}2,\frac{1}2].$$

Here the integer $\beta_0$ can be determined by the position of $p_H(E)$ as in Figure \ref{fig:nuBNofE}. Or more precisely, the integer $\beta_0$ is in 
$$\begin{cases}
\frac{H^2\ch_1(E)}{H^3\ch_0(E)}-\sqrt{\frac{\db_H(E)}{(H^3\ch_0(E))^2}+\frac{1}4}+\left[-\frac{1}2,\frac{1}2\right]&  \text{,when $\ch_0(E)>0$;} \\
\frac{H^2\ch_1(E)}{H^3\ch_0(E)}+\sqrt{\frac{\db_H(E)}{(H^3\ch_0(E))^2}+\frac{1}4}+\left[-\frac{1}2,\frac{1}2\right]&  \text{,when $\ch_0(E)<0$;} \\
-\frac{H\ch_2(E)}{H^2\ch_1(E)}+\left[-\frac{1}2,\frac{1}2\right]&  \text{,when $\ch_0(E)=0$.} 
\end{cases} $$

By reseting $E=E(-\beta_0H)$, we may assume $\beta_0=0$. In particular,  we may assume that $E$ is $\nu_{\alpha,0,H}$-tilt semistable and $Q_{\alpha,0}(E)<0$. Suppose $E$ becomes strictly $\nu_{\alpha_0,0,H}$-tilt semistable for some $0<\alpha_0\leq\alpha$, then by Lemma \ref{lem:discriminnatdec} and the assumption that $\db_H(E)>0$, for each Jordan-H\"older factor $E_i$, we have $\db_H(E_i)<\db_H(E)$. Note that $Q_{\alpha_0,0}(E)\leq Q_{\alpha,0}(E)<0$. By \cite[Lemma A.6]{BMS:stabCY3s}, there exists a Jordan-H\"older factor $E_i$ such that $Q_{\alpha_0,0}(E_i)<0$. This violates the minimum assumption on $\db_H(E)$. 

Let $(\alpha_1,\beta_1)$ be a point on the wall through $p_H(E)$ and $(0,0)$ when $H\ch_2(E)\neq 0$. By Lemma \ref{lem:nestedandopen},  we have $Q_{\alpha_1,\beta_1}(E)<0$ as $Q_{0,0}(E)\leq Q_{\alpha,\beta}(E)<0$. If $E$ is strictly $\nu_{\alpha_1,\beta_1,H}$-tilt semistable, then any of its Jordan-H\"older factor $E_i$ is Brill-Noether stable and has $\nu_{BN}(E_i)=\nu_{BN}(E)\in[-\frac{1}2,\frac{1}2]$. By Proposition \ref{prop:BGforbnstab}, each factor satisfies $Q_{0,0}(E_i)\geq 0$. By \cite[Lemma A.6]{BMS:stabCY3s} again,  we have $Q_{0,0}(E)\geq 0$.

If $E$ is $\nu_{\alpha_1,\beta_1,H}$-tilt stable or $H\ch_2(E)=0$, then $E$ is Brill-Noether stable. By Proposition \ref{prop:BGforbnstab}, we also have $Q_{0,0}(E)\geq 0$. 

In either case, we get $0>Q_{\alpha,0}(E)\geq Q_{0,0}(E)\geq 0$, which is a contradiction. Therefore, under the assumption of Proposition \ref{prop:BGforbnstab}, Theorem \ref{thm:boginqforx5} holds.
\end{proof}

We now show that Proposition \ref{prop:BGforbnstab} can be implied by the  stronger Bogomolov-Gieseker type inequality for the second Chern character of Brill-Noether stable objects.

\begin{Prop}
Theorem \ref{thm:BGforslopestableX5} implies Proposition \ref{prop:BGforbnstab}.
\label{prop:bg2impliesbg3}
\end{Prop}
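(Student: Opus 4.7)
The plan is to use Riemann--Roch on the Calabi--Yau threefold $X$ to recast the desired inequality $Q_{0,0}(E)\geq 0$ as an upper bound on $\chi(\cO_X,E)$, and then to derive that bound by controlling certain Hom spaces through Lemma \ref{lem:extension} combined with the refined second-Chern-character bound of Theorem \ref{thm:BGforslopestableX5}. Since $c_1(X)=0$, Riemann--Roch reads $\chi(\cO_X,E)=\ch_3(E)+\frac{c_2(X)\cdot \ch_1(E)}{12}$, so after discarding the non-negative terms in the alternating sum $\chi(\cO_X,E)=\sum(-1)^i\ext^i(\cO_X,E)$ one obtains
$$\ch_3(E)\leq \hom(\cO_X,E)+\ext^2(\cO_X,E)-\frac{c_2(X)\cdot\ch_1(E)}{12},$$
and Serre duality on $X$ identifies $\ext^2(\cO_X,E)$ with $\hom(E,\cO_X[1])$. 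It therefore suffices to bound these two Hom spaces tightly enough that the right-hand side is at most $\frac{2(H\ch_2(E))^2}{3H^2\ch_1(E)}$, which is exactly $Q_{0,0}(E)\geq 0$.

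To estimate $\hom(\cO_X,E)$, I would first reduce to the case $\nu_{BN}(E)\geq 0$ by replacing $E$, if necessary, with a shifted derived dual $\RlHom(E,\cO_X)[n]$, which preserves Brill--Noether stability and negates $\nu_{BN}$. Writing $W=\Hom(\cO_X,E)$, Lemma \ref{lem:extension} produces a Brill--Noether semistable cone $\tilde E=\mathrm{Cone}(\cO_X\otimes W\to E)$ whose characters agree with $\ch(E)$ except that $\ch_0$ and $\ch_3$ are each decreased by $\dim W$. Lemma \ref{lem:nestedandopen} promotes BN semistability of $\tilde E$ to $\nabH$-tilt semistability along the ray through $(0,0)$ and $p_H(\tilde E)$, and wall-crossing toward the large-volume chamber produces Jordan--H\"older factors that, by Lemma \ref{lem:discriminnatdec}, either have strictly smaller $H$-discriminant (so can be treated by induction on $\db_H$) or are genuine slope-semistable coherent sheaves to which Theorem \ref{thm:BGforslopestableX5} applies. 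Reading the resulting second-Chern-character restrictions back through the wall-crossing bounds $\dim W=\hom(\cO_X,E)$ from above. The term $\hom(E,\cO_X[1])$ is handled analogously using the second (dual) half of Lemma \ref{lem:extension}.

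The main obstacle is making the wall-crossing bookkeeping tight enough: the three-piece piecewise-linear inequality in Theorem \ref{thm:BGforslopestableX5}, together with its sharper quadratic improvement near the extremal twisted slopes, must sum across Jordan--H\"older factors into a clean quadratic bound on the original Hom quantities. The hypothesis $\nu_{BN}(E)\in[-\tfrac12,\tfrac12]$ is precisely what aligns the walls through $p_H(E)$ with integer $\beta$ near the critical slopes $\tfrac14\Z$ where Theorem \ref{thm:BGforslopestableX5} is sharp, which is also what makes the boundary cases $\nu_{BN}(E)=\pm\tfrac12$ tractable. Combining the two Hom estimates with the Riemann--Roch formula above and simplifying should then produce the target bound $\ch_3(E)\leq \frac{2(H\ch_2(E))^2}{3H^2\ch_1(E)}$.
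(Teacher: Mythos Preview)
Your overall architecture---Riemann--Roch plus Lemma~\ref{lem:extension} plus Theorem~\ref{thm:BGforslopestableX5}---matches the paper, but several steps are either unnecessary or genuinely incomplete.

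First, the wall-crossing/induction scheme you propose for $\tilde E$ is superfluous and, as written, does not work. Theorem~\ref{thm:BGforslopestableX5} is stated to apply directly to any $\nu_{\alpha,0,H}$-tilt semistable object, in particular to the Brill--Noether semistable $\tilde E$ produced by Lemma~\ref{lem:extension}. The paper simply reads off from Theorem~\ref{thm:BGforslopestableX5} that $\frac{H^2\ch_1(\tilde E)}{H^3\rk(\tilde E)}\notin(-\tfrac14,0]$, and a short dichotomy then yields $\hom(\cO_X,E)\leq \rk(E)+\tfrac{2}{5}H^2\ch_1(E)+\tfrac{4}{5}H\ch_2(E)$. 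Your induction on $\db_H$ has no clear inductive hypothesis (you are bounding $\hom(\cO_X,E)$, not proving an inequality for factors of $\tilde E$), and the piecewise bounds do not sum across Jordan--H\"older factors in any obvious way.

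Second, when $\nu_{BN}(E)>0$ you do not need to bound $\hom(E,\cO_X[1])$ at all: choosing $(\alpha,\beta)$ with $0<\alpha/\beta<\nu_{BN}(E)$ makes both $E$ and $\cO_X[1]$ tilt-stable with $\nabH(\cO_X[1])<\nabH(E)$, so by Serre duality $\Hom(\cO_X,E[i])=0$ for all $i\geq 2$. Hence $\chi(\cO_X,E)\leq \hom(\cO_X,E)$ with only one term to control. The second clause of Lemma~\ref{lem:extension} requires $\nu_{BN}(E)<0$, so it is not available here anyway. (Your reduction by derived dual is also not quite clean: by \cite[Proposition 5.1.3(b)]{BMT:3folds-BG} the dual is only Brill--Noether semistable up to a zero-dimensional torsion correction, which the paper tracks explicitly.)

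Third, and most importantly, you have waved away the step that carries the actual weight. After substituting the Hom bound into Riemann--Roch one obtains an explicit lower bound for $Q_{0,0}(E)$ as a quadratic form in $H^2\ch_1$, $H\ch_2$, $H^3\rk$; showing this is non-negative requires a four-way case split on $\frac{H^2\ch_1(E)}{H^3\rk(E)}$ and a second application of Theorem~\ref{thm:BGforslopestableX5} to $E$ itself (not to $\tilde E$). This case analysis, together with a separate argument for $\nu_{BN}(E)=0$ where both $\hom(\cO_X,E)$ and $\hom(\cO_X,E[2])$ must be bounded, is where the proof is actually made or broken.
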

\begin{proof}
Let $E\in\Coh^{0,H}(X)$ be a Brill-Noether stable object, we first discuss the case when $\nu_{BN}(E)\in (0,\frac{1}2]$.
There exists $(\alpha,\beta)$ such that $\alpha>\frac{1}2\beta^2$, $0<\frac{\alpha}{\beta}<\nu_{BN}(E)$ and $E$ is $\nabH$-tilt stable. Note that $$\nabH(\mathcal O_X[1])= \frac{\alpha}{\beta}<\nabH(E).$$

\begin{figure}[hbt!]
\begin{center}
\scalebox{0.7}{

\begin{tikzpicture}[scale=1.4]
\tikzset{%
    add/.style args={#1 and #2}{
        to path={%
 ($(\tikztostart)!-#1!(\tikztotarget)$)--($(\tikztotarget)!-#2!(\tikztostart)$)%
  \tikztonodes},add/.default={.2 and .2}}
}

\newcommand\XA{1.5}

\draw[->] (-2.1*\XA,0) -- (2.5*\XA,0) node[above right] {$\frac{H\ch_2}{H^3\ch_0}$};

\draw[->] (0,-2.25)-- (0,0) node [right] {O} --  (0,2.5) node[above left] {$\frac{H^2\ch_1}{H^3\ch_0}$};

\draw[rotate=-90] (0,0) parabola (2,2*\XA);
\draw[rotate=-90] (0,0) parabola (-2,2*\XA);

\coordinate (O) at (0,0);
\coordinate(O1) at (0.5*\XA,1);
\coordinate(O-1) at (0.5*\XA,-1);
\coordinate (AB) at (0.12*\XA,0.4);
\coordinate (E) at (0.75*\XA,2);
\node at (E) {$\bullet$};
\node at (AB) {$\bullet$};
\node at (O1) {$\bullet$};
\node at (O-1) {$\bullet$};
\node[above] at (E) {$p_H(E)$};
\node[right] at (O1) {$p_H(O(H))$};
\node[right] at (O-1) {$p_H(O(-H))$};
\coordinate (B) at (-1.5,1);
\node[left] at (B) {$(\alpha,\beta)$};

\draw[->] (B) --(0.08*\XA,0.43);

\draw [add= 1.3 and 0.2] (O) to (E);
\draw [add= -1.3 and 0.5] (E) to (O) coordinate (E1);
\draw [add= 2.5 and 1.8,dashed] (O) to (O1) node[above] {$\nu_{BN}=\frac{1}2$};
\draw [add= 2.5 and 1.8,dashed] (O) to (O-1) node[right] {$\nu_{BN}=-\frac{1}2$};

\node at (E1) {$\bullet$};
\node[left] at (E1) {$p_H(\tilde E)$};


\draw[red] (0.5*\XA,1)--(0.125*\XA,0.75)--(-0.125*\XA,0.25)--(0,0)--(-0.125*\XA,-0.25)--(0.125*\XA,-0.75)--(0.5*\XA,-1);

\draw[red] (0.5*\XA,1)--(0.625*\XA,1.25)--(1.375*\XA,1.75)--(2*\XA,2);
\draw[red] (0.5*\XA,-1)--(0.625*\XA,-1.25)--(1.375*\XA,-1.75)--(2*\XA,-2);
\end{tikzpicture}
}
\end{center}
\caption{The point $(\alpha,\beta)$ is slightly to the left of the line through $O$ and $p_H(E)$.} \label{fig:phe}
\end{figure}

Since both $\mathcal O_X[1]$ and $E$ are $\nabH$-tilt stable, by Serre duality, we have $$\Hom(\mathcal O_X,E[2+i])\simeq (\Hom(E,\mathcal O_X[1-i]))^*=0,$$
for any $i\geq 0$. Consider the object $\tilde E\coloneqq$Cone$(\mathcal O_X\otimes \Hom(\mathcal O_X,E)\rightarrow E)$, by Lemma \ref{lem:extension}, $\tilde E$ is Brill-Noether semistable in $\Coh^{0,H}(X)$. By Theorem \ref{thm:BGforslopestableX5}, the slope $\frac{H^2\ch_1(\tilde E)}{H^3\rk(\tilde E)}$ cannot be in $(-\frac{1}4,0]$. Moreover, either 
\begin{align}
\label{eq:33}\frac{H^2\ch_1(E)}{H^3(\rk(E)-\hom(\mathcal O_X,E))}=\frac{H^2\ch_1(\tilde E)}{H^3\rk(\tilde E)}\notin [-\frac{1}2,-\frac{1}4];
\end{align}
or  by Theorem \ref{thm:BGforslopestableX5}, 
\begin{align}
\frac{H^2\ch_1(\tilde E)}{H^3\rk(\tilde E)}\in [-\frac{1}2,-\frac{1}4] \text { and } H\ch_2(\tilde E)\geq -\frac{1}2H^2\ch_1(\tilde E)-\frac{1}4H^3\rk(\tilde E).
\label{eq:331}
\end{align}
When (\ref{eq:33}) happens, we have 
\begin{align}
\hom(\mathcal O_X,E)<\rk(E)+\frac{2}5H^2\ch_1(E).
\end{align}
When (\ref{eq:331}) happens, we have 
\begin{align}
\hom(\mathcal O_X,E)\leq \rk(E)+\frac{2}5H^2\ch_1(E)+\frac{4}5H\ch_2(E).\label{eq:333}
\end{align}
Note that $H^2\ch_1(E)>0$ and we have assumed that $\nu_{BN}(E)>0$, hence $H\ch_2(E)>0$ and inequality (\ref{eq:333}) always holds. 

Since $\Hom(\mathcal O_X[1], E[i])=0$ for $i\leq -1$, we have 
$$\chi(\mathcal O_X,E)\leq \hom(\mathcal O_X,E).$$
Substitute this to (\ref{eq:333}), recall that $\mathrm{td}_1(X)=0$, $\mathrm{td}_3(X)=\chi(\cO_X)=0$ and $\mathrm{td}_2(X)=\frac{5}6H^2$ (as $\chi(\cO_X(H))=5$),  by Hirzebruch-Riemann-Roch, we have 
$$\ch_3(E)+\frac{5}6 H^2\ch_1(E)=\chi(E)\leq \rk(E)+\frac{2}5H^2\ch_1(E)+\frac{4}5H\ch_2(E).$$
By multiplying $6H^2\ch_1(E)$ and cancelling out some terms on both sides, we have:
\begin{align}
Q_{0,0}(E)  \geq &\frac{13}{5}(H^2\ch_1(E))^2+4(H\ch_2(E))^2-6\rk(E)H^2\ch_1(E)-\frac{24}5H\ch_2(E)H^2\ch_1(E)\nonumber\\
=& \frac{6}5H^2\ch_1(H^2\ch_1-H^3\rk)+\frac{1}5(7H^2\ch_1-10H\ch_2)(H^2\ch_1-2H\ch_2)\label{eq:336}\\=& \frac{6}5H^2\ch_1(2H^2\ch_1-4H\ch_2-H^3\rk)+4(H\ch_2)^2+\frac{1}5(H^2\ch_1)^2\label{eq:337}\\
=& \frac{6}5H^2\ch_1(\frac{3}2H^2\ch_1-H\ch_2-H^3\rk)+\frac{1}5(4H^2\ch_1-10H\ch_2)(H^2\ch_1-2H\ch_2)\label{eq:338}\\
=& \frac{4}5(\frac{3}2(H^2\ch_1)^2-H\ch_2H^3\rk-H^2\ch_1H^3\rk) \nonumber\\
&  +\frac{1}5(7H^2\ch_1-10H\ch_2-2H^3\rk)(H^2\ch_1-2H\ch_2).
\label{eq:339}
\end{align}
By Theorem \ref{thm:BGforslopestableX5} and the assumption that $\nu_{BN}(E)\in (0,\frac{1}2]$, we have $\frac{H^2\ch_1(E)}{H^3\rk(E)}\notin[0,\frac{1}2]$.
\begin{itemize}
\item When $\frac{H^2\ch_1(E)}{H^3\rk(E)} \notin[\frac{1}2,1]$, we have $H^2\ch_1(E)>0$, $H^2\ch_1(E)>H^3\rk(E)$ and $H^2\ch_1(E)= \frac{1}{\nu_{BN}(E)} H\ch_2(E)\geq 2H\ch_2(E)$, hence the equation (\ref{eq:336}) is non-negative.
\item When $\frac{H^2\ch_1(E)}{H^3\rk(E)}\in[\frac{1}2,\frac{3}4]$,  by Theorem \ref{thm:BGforslopestableX5}, the equation (\ref{eq:337}) is non-negative.
\item When $\frac{H^2\ch_1(E)}{H^3\rk(E)}\in[\frac{3}4,\frac{10}{11}]$,  by Theorem \ref{thm:BGforslopestableX5}, $\frac{3}2H^2\ch_1(E)-H\ch_2(E)-H^3\rk(E) \geq 0$ and $\nu_{BN}(E)< \frac{2}5$. Therefore $4H^2\ch_1(E)-10H\ch_2(E)\geq 0$, the equation (\ref{eq:338}) is non-negative.
\item When $\frac{H^2\ch_1(E)}{H^3\rk(E)}\in[\frac{10}{11},1]$,  by Theorem \ref{thm:BGforslopestableX5}, the first term in  equation (\ref{eq:339}) is non-negative. The term $7H^2\ch_1(E)-10H\ch_2(E)-2H^3\rk(E)$ is also non-negative since $2H\ch_2(E)+2H^3\rk(E)\leq 3H^2\ch_1(E)$ by Theorem \ref{thm:BGforslopestableX5}. Therefore, the equation (\ref{eq:339}) is non-negative.
\end{itemize}

As a summary, when $\nu_{BN}(E)\in(0,\frac{1}2]$, we always have $Q_{0,0}(E)\geq 0$.\\

The same argument applies for the case when $\nu_{BN}(E)\in[-\frac{1}2,0)$. In that case, we will have $\Hom(\mathcal O_X,E)=0$ and we can bound the dimension $\hom(\mathcal O_X,E[2])=\hom(E,\mathcal O[1])$ by Lemma \ref{lem:extension}. As pointed out by the referee, we may also consider the derived dual $\mathbb D(E):=E^\vee[1]$. By \cite[Proposition 5.1.3 (b)]{BMT:3folds-BG}, it fits into an exact triangle $\bar E\rightarrow \mathbb D(E) \rightarrow T_0[-1]$ for a Brill-Noether semistable object $\bar E\in \Coh^{0,H}(X)$ and a zero-dimensional torsion sheaf $T_0$. Note that $\ch_1(\bar{E})=\ch_1(\mathbb D(E))=\ch_1(E)$ and $\ch_2(\bar{E})=-\ch_2(E)$ we have $\nu_{BN}(E)\in(0,\frac{1}2]$.  Therefore,
\begin{align*}
Q_{0,0}(E) = Q_{0,0}(\mathbb D(E))=Q_{0,0}(\bar{E})+6\ch_1(E)\ch_3(T_0)\geq 0.
\end{align*}

As for the remaining case that $\nu_{BN}(E)=0=H\ch_2(E)$, we consider the object $\tilde E\coloneqq Cone(\mathcal O_X\otimes \Hom(\mathcal O_X,E)\xrightarrow{can} E)$. If $\tilde {E}$ is $\nu_{\alpha,0,H}$-tilt semistable for some $\alpha>0$, then by Theorem \ref{thm:BGforslopestableX5},  we know that $\frac{H^2\ch_1(\tilde{E})}{H^3\rk(\tilde{E})}\notin(-\frac{1}2,0]$. As $H^2\ch_1(\tilde{E})=H^2\ch_1(E)>0$, we have
\begin{align}
H^3\rk(\tilde E)\geq -2H^2\ch_1(\tilde E).
\label{eq:341}
\end{align}
Otherwise, for each $\delta>0$, $\tilde E$ is destabilized by some $\nu_{\delta^2,\delta,H}$-tilt stable object $F_\delta \hookrightarrow\tilde E$ in $\Coh^{\delta,H}(X)$. We may assume $0<\delta<\frac{1}{2}$ sufficiently small so that $E$ is $\nu_{\delta^2,\delta,H}$-tilt stable. Note that either $\Hom(F_{\delta},E)\neq 0$ or $\Hom(F_{\delta},\mathcal O_X[1])\neq 0$. We have either $\nu_{\delta^2,\delta,H}(F_\delta)\leq \nu_{\delta^2,\delta,H}(\mathcal O[1])$ or 
$\nu_{\delta^2,\delta,H}(F_\delta)\leq \nu_{\delta^2,\delta,H}(E)$. By Theorem \ref{thm:BGforslopestableX5}, when $\delta<\frac{1}2$, we always have $\nu_{\delta^2,\delta,H}(E)<\nu_{\delta^2,\delta,H}(\mathcal O[1])=\delta$. Therefore, $\nu_{\delta^2,\delta,H}(F_\delta)\leq \nu_{\delta^2,\delta,H}(\mathcal O[1])$. Note that the `$=$' can only hold when $F_\delta\simeq \mathcal O[1]$, but then $\Hom(F_\delta,\tilde E)=0$. Therefore,  $\nu_{\delta^2,\delta,H}(F_\delta)<\delta$.

We may assume that $F_\delta$ has the greatest $\nu_{\delta^2,\delta,H}$ slope among all destabilizing subobject of $\tilde E$ in $\Coh^{\delta,H}(X)$. Then for each Harder-Narasimhan factor $E_i$ of $\tilde E$ with respect to $\nu_{\delta^2,\delta,H}$, we have $\nu_{\delta^2,\delta,H}(E_i)<\delta$.  By Lemma \ref{lem:nestedandopen}, each $E_i$ is also $\nu_{\alpha_i,0,H}$-tilt stable for some $\alpha_i>0$ and in addition $\nu_{BN}(E_i)<\delta$.  By Theorem \ref{thm:BGforslopestableX5}, $$\frac{H^2\ch_1(E_i)}{H^3\rk(E_i)}\notin [\frac{-1}{4\delta+2},0].$$  
Or equivalently, $\frac{H^3\rk(E_i)}{H^2\ch_1(E_i)}>-4\delta-2$. When  $\delta$ tends to $0$, we have $H^3\rk(\tilde E)\geq -2H^2\ch_1(\tilde E)$. As (\ref{eq:341}) always holds, we have
\begin{align}
\hom(\mathcal O_X,E)\leq \frac{2}5H^2\ch_1(E)+\rk(E).\label{eq:342}
\end{align}
Recall that by \cite[Proposition 5.1.3 (b)]{BMT:3folds-BG}, there is an exact triangle $\bar E\rightarrow \mathbb D(E)\rightarrow T_0[-1]$ for a Brill-Noether semistable object $\bar E\in \Coh^{0,H}(X)$ and a zero-dimensional torsion sheaf $T_0$. We have 
\begin{align}
\hom(\mathcal O_X,E[2]) & =\hom(E,\mathcal O_X[1]) = \hom (\mathbb D(\mathcal O_X[1]),\mathbb D(E)) \\ & =\hom(\mathcal O_X,\mathbb D(E))=\hom(\mathcal O_X,\bar E) \leq \frac{2}5H^2\ch_1(E)-\rk(E).\label{eq:343}
\end{align}
By Hirzebruch-Riemann-Roch (\ref{eq:342}) and (\ref{eq:343}), we have 
$$\ch_3(E)+\frac{5}6H^2\ch_1(E)\leq \chi(E)\leq \hom(\mathcal O_X,E)+\hom(\mathcal O_X,E[2])=\frac{4}5H^2\ch_1(E).$$
Therefore, $\ch_3(E)<0$ and $Q_{0,0}(E)>0$. 

In any case of $\nu_{BN}(E)$, we always have $Q_{0,0}(E)\geq 0$.
\end{proof}

\section{Clifford type inequality for curves {$C_{2,2,5}$}}
\label{sec:clf}

The generalized Clifford index theorem  for curves, \cite[Theorem 2.1]{Brambila-Paz1995Geography} states that for any semistable vector bundle $F$ over a smooth curve $C$ with rank $r$ and slope $\mu\in[0,g]$, where $g$ is the genus of the curve, the following bound holds:
$$h^0(F)/r\leq 1+\frac{\mu}2.$$

The main purpose of this section is to set up the following stronger Clifford type inequality for the curve $C_{2,2,5}$, which is the complete intersection of two quadratic hypersurfaces and a quintic hypersurface in $\mathbf {P}_\C^4$.
\begin{Prop}
Let $F$ be a semistable vector bundle on a smooth curve $\cer$ with rank $r$ and slope $\mu\in(0,10]\cup [30,40]$, then we have the following bounds for the dimension of global sections of $h^0(F)$:
\begin{align}\label{eq:clf}
h^0(F)/r\leq \begin{cases}
\frac{40}{41}+\frac{\mu}{41}, & \text{when } \mu\in(0,2);\\
\max\{\frac{24}{25}+\frac{4}{125}\mu,\frac{241}{248}+\frac{33}{1240}\mu\}, & \text{when } \mu\in[2,\frac{5}2);\\
\max\{\frac{12}{13}+\frac{3}{65}\mu,\frac{295}{306}+ \frac{19}{612}\mu\},& \text{when } \mu\in[\frac{5}2,\frac{10}3);\\
\max\{\frac{4}{5}+\frac{2}{25}\mu,\frac{193}{204}+ \frac{7}{170}\mu\},  & \text{when } \mu\in[\frac{10}3,5);\\
\max\{\frac{1}{5}\mu,\frac{55}{62}+ \frac{9}{124}\mu\}, & \text{when } \mu\in[5,10];\\
\frac{2}{5}\mu-4, & \text{when } \mu\in[30,37];\\
\frac{11}{15}\mu-\frac{49}{3}, & \text{when } \mu\in[37,40].\\
\end{cases}
\end{align}
\label{prop:clfboundsforC225}
\end{Prop}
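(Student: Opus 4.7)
The strategy, following \cite{Fe17, FL:clford}, is to realise $\cer$ as a curve on the degree-four del Pezzo surface $Y = Q_1\cap Q_2 \subset \mathbf{P}^4_{\C}$, with hyperplane class $H$ satisfying $H^2 = 4$ and $-K_Y = H$. Since the defining quintic meets $Y$ in a divisor of class $5H$, we have $\cer = 5H$ as a divisor class on $Y$, with $g(\cer) = 41$ and $\deg_H \cer = 20$. For a $\mu_H$-semistable bundle $F$ on $\cer$ of rank $r$ and slope $\mu$, I would pass to $\cE \coloneqq i_*F \in \Coh(Y)$; Grothendieck--Riemann--Roch (using $K_Y = -H$, $\chi(\cO_Y) = 1$) yields
\[
\ch(\cE) = \bigl(0,\, 5rH,\, r(\mu-50)\bigr).
\]
Since $h^0(F) = h^0(\cE) = \dim \Hom(\cO_Y, \cE)$, the Clifford bound becomes a bound on global sections of $\cE$ on the surface.

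I would then combine three ingredients. First, $\mu_H$-semistability of $F$ on $\cer$ forces $\cE$ to be Brill--Noether semistable on $Y$ in the sense of Definition \ref{def:bnstab}: any destabilising quotient of $\cE$ in $\Coh^{0,H}(Y)$ would restrict to a quotient sheaf of $F$ of smaller slope, contradicting semistability. Second, one applies Lemma \ref{lem:extension} -- either directly or in its derived-dual form, since $\nu_{BN}(\cE) = (\mu-50)/20 < 0$ in the ranges considered -- to the canonical evaluation (resp.\ coevaluation) triangle, obtaining a Brill--Noether semistable object $\tilde\cE$ whose Chern character records $h^0(F)$, or via Serre duality on $\cer$, $h^1(F)$. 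Third, one applies the Bogomolov inequality $\db_H(\tilde\cE) \geq 0$ together with the stronger BG-type inequalities available on $Y$ coming from the exceptional classes it carries -- the sixteen $(-1)$-curves $\ell$ with $\ell\cdot H = 1$, the twists $\cO_Y(kH)$, the conic pencils $D$ with $D^2 = 0$, $D\cdot H = 2$, and extensions thereof -- to convert tilt-semistability of $\tilde\cE$ into linear inequalities relating $h^0(F)$ to $(r,\mu)$.

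The piecewise structure of (\ref{eq:clf}) reflects the wall structure of tilt-stability on $Y$: the critical slopes $\mu \in \{5,\, 10,\, 30,\, 35,\, 37\}$ are precisely where the extremal destabilising configuration for $\tilde\cE$ changes -- for instance, from $\cO_Y$ to $\cO_Y(H)$, then to a line class $\cO_\ell$, then to a rank-two extension, and so on -- each imposing a different linear bound on the Chern character. For $\mu \in [30, 40]$ the cleanest route is via Serre duality on $\cer$: writing $h^0(F) = h^1(F^\vee\otimes\omega_\cer) + r(\mu - 40)$ reduces the problem to bounding $h^1$ of a bundle of slope $80-\mu \in [40, 50]$, whose pushforward has Brill--Noether slope in $[-1/2, 0]$, a range where the evaluation-triangle argument operates most directly. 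The principal obstacle is the wall enumeration in the third ingredient: since $\Pic(Y)$ has rank six, there are many candidate destabilising classes, and for each $\mu$-interval one has to identify the sharp one, verify that the extremal configuration is actually realised, and check that the endpoint values at $\mu \in \{5, 10, 30, 35, 37\}$ match consistently across the pieces so that the stated bounds are tight.
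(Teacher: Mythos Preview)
Your setup is correct --- realising $\cer$ on the degree-$4$ del Pezzo $S_{2,2}$ and computing $\ch(\iota_*F)=(0,5rH,r(\mu-50))$ --- but the central claim of your first ingredient is false, and this breaks the argument. You assert that $\mu_H$-semistability of $F$ on $\cer$ forces $\iota_*F$ to be Brill--Noether semistable on $Y$, arguing that a destabilising quotient in $\Coh^{0,H}(Y)$ would restrict to a destabilising quotient of $F$. But a destabilising subobject of $\iota_*F$ in the tilted heart typically has positive rank (it is a torsion-free sheaf, or a two-term complex), and there is no restriction map back to the curve that would produce a sub-bundle of $F$. In fact the paper shows (Lemma~\ref{lem:fwforiotaF} and Proposition~\ref{prop:boundsforBNofiotaF}) that $\iota_*F$ is Brill--Noether semistable only when $\mu\le 30-20\sqrt 2\approx 1.72$; for all larger $\mu$ in the range it is genuinely destabilised as $\alpha\to 0$. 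So your application of Lemma~\ref{lem:extension} directly to $\iota_*F$ cannot go through.

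The paper's route is different and does not require $\iota_*F$ to be BN-semistable. One takes the Harder--Narasimhan filtration of $\iota_*F$ with respect to $\nu_{BN}$ (Lemma~\ref{lem:HNforbnstab}), bounds $\hom(\cO_{S_{2,2}},-)$ on each BN-semistable factor via a direct $\chi$-calculation against the line bundles $\cO_{S_{2,2}}(-nH)$ (Lemma~\ref{lem:p2length}), and then controls the range of BN-slopes of the factors by locating the first wall for $\iota_*F$ using the refined $\Gamma$-curve of Observation~\ref{obs:geostabonS22} (Lemma~\ref{lem:fwforiotaF}, Proposition~\ref{prop:boundsforBNofiotaF}). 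The resulting sum $\sum_i\hom(\cO,F_i/F_{i-1})$ is then maximised by a convex-polygon argument (Lemma~\ref{lem:polygonmax}) over the triangle determined by these slope bounds; the piecewise form of (\ref{eq:clf}) comes from the half-integer break-points of the $\clubsuit$-function in that optimisation, not from the $(-1)$-curves or conic pencils on $Y$. Your Serre-duality reduction for $\mu\in[30,40]$ is also not what the paper does: those cases are handled by the same HN-polygon method, with the wall now passing close to $p_H(\cO(-3H))$.
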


The bound listed above is the best result we can prove so far. As for the purpose to prove Proposition 5.2, when $\mu\in[2,10]$, we only need the following weaker but neat bound.

When $\mu\in[2,5)$,  the right hand side is always less than or equal to
$\frac{7\mu}{120}+\frac{109}{120}$.

When $\mu\in[5,10]$,  the right hand side is always less than or equal to $\frac{3\mu}{20}+\frac{1}2$. 

\begin{figure}[hbt!]
\begin{center}
\scalebox{0.7}{
\begin{tikzpicture}[y=600pt,x=60pt,
circ/.style={shape=circle, inner sep=0.7pt, draw, node contents=}]

\newcommand\UP{0.9};

\draw[->] (0,0) -- (5.5,0) node[below] {$\mu$};

\draw (0,0) node [below left] {O}  -- (0,0.02);
\draw[dashed] (0,0.02)--(0,0.07);
\draw[->] (0,0.06) --  (0,0.32) node[above left] {$h^0(F)/r$};

\node[left] at (0,1-\UP) {$(0,1)$};
\node at (0,1-\UP) {$\bullet$};

 \draw node at (0,40/41-\UP) [circ, label=below left:{$(0,\frac{40}{41})$}];
 \draw (0,40/41-\UP)--(2,42/41-\UP);
 \draw node at (2,42/41-\UP) [circ, label=below left:{$(2,\frac{42}{41})$}];
 
 \draw[dashed] (2,42/41-\UP)-- (2,0) node [below]{2};
 
 \draw node at (2,41/40-\UP) {$\cdot$} node[above left] at (2,41/40-\UP)  {$(2,\frac{41}{40})$};
 \draw (2,41/40-\UP)--(5/2,26/25-\UP);
 \draw node at (5/2,26/25-\UP) [circ, label=below:{$(\frac{5}2,\frac{26}{25})$}];
 
 \draw node at (5/2,25/24-\UP){$\cdot$} node[above] at (5/2,25/24-\UP) {$(\frac{5}2,\frac{25}{24})$};
 \draw (10/3,14/13-\UP)--(5/2,25/24-\UP);
 \draw node at (10/3,14/13-\UP) [circ,label=below:{$(\frac{10}3,\frac{14}{13})$}];
 
 \draw node at (10/3,13/12-\UP){$\cdot$} node[above] at (10/3,13/12-\UP) {$(\frac{10}3,\frac{13}{12})$};
 \draw (10/3,13/12-\UP)--(5,6/5-\UP);
 \draw node at (5,6/5-\UP) [circ,label=below:{$(5,\frac{6}{5})$}];
 \draw[dashed] (5,6/5-\UP)-- (5,0) node [below]{5};
\end{tikzpicture}
}

\end{center}
\caption{Bounds for $h^0(F)/r$ when $\mu\in(0,5)$.} \label{fig:clfbound}
\end{figure}
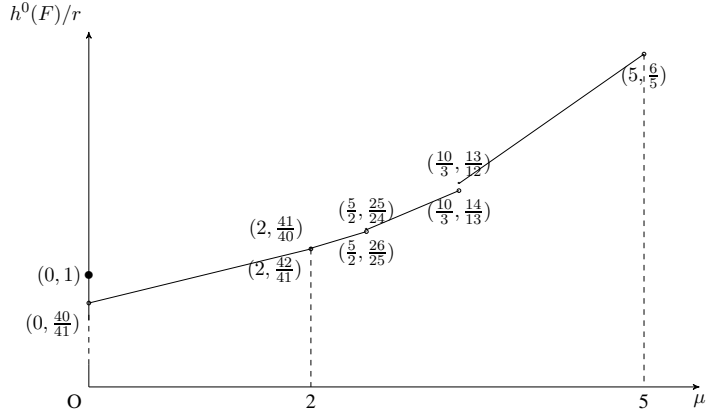

\begin{Rem}
The inequality is sharp for some values of the slope  $\mu$. When $\mu\in (0,2)$, this result is the sharp bound as shown in \cite[Proposition 2.1]{Lange2015g5}. When $\mu=2,\frac{5}2,\frac{10}3,5,10,30,40$, this bound is also for sure to be sharp. 

 To express those vector bundles with sharp bound more precisely, we let $S_{2,2}$ be  a smooth complete intersection of two quadratic hyper-surfaces such that $S_{2,2}$ contains $C_{2,2,5}$ in $\mathbf{P}^4_{\C}$. Note that $S_{2,2}$ is a del Pezzo surface with degree $4$, it can be viewed as a projective plane blown-up at $5$ points. Denote $\ell_0$ as the pull-back of a line in $\mathbf{P}^2$ and $\ell_1$ as one of the exceptional lines.
  
When $\mu=\frac{10}{n+1}$ for $n=1,2,3,4$, we may achieve the maximum $h^0(F)/r$ by letting $F=E_n|_{C_{2,2,5}}$ where $E_n$ is a vector bundle on $S_{2,2}$ defined as the cokernel of the map
$$\mathcal O_{S_{2,2}}(-nH)\xrightarrow{can}\mathcal O_{S_{2,2}}\otimes \Hom(\mathcal O_{S_{2,2}}(-nH),\mathcal O_{S_{2,2}})^*. $$

When $\mu=10$, one may let $F=\mathcal O_{S_{2,2}}(\ell_0-\ell_1)|_{C_{2,2,5}}$. When $\mu=40$, one may let $F=\cO(2H)|_{C_{2,2,5}}$.
\label{rem:boundforclf}
\end{Rem}

Back to the proof for the Proposition \ref{prop:clfboundsforC225}, it is enough to prove the statement for stable vector bundles. We denote the inclusion map by $\iota:C_{2,2,5}\hookrightarrow S_{2,2}$. In this section, we write $H$ for $[\mathcal O_{S_{2,2}}(1)]$ and only use stability conditions on $S_{2,2}$ with polarization $H$. As $h^0(F)=h^0(\iota_*F)$, we will always consider the dimension of global sections on $\iota_*F$ in $D^b(S_{2,2})$ instead of $F$. The following statement is standard:

\begin{Lem}
Let $F$ be a stable vector bundle on $C_{2,2,5}$, then $\iota_*F$ is $\nu_{\alpha,0,H}$-tilt stable for $\alpha\gg 0$.
\label{lem:iotaFstable}
\end{Lem}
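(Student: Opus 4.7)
The plan is to reduce $\nu_{\alpha,0,H}$-tilt stability of $\iota_*F$ on $S_{2,2}$ to the slope-stability of $F$ on the curve. Since $F$ is a vector bundle on a smooth curve, $\iota_*F$ is a pure one-dimensional torsion sheaf, so it lies in $\mathcal T_{0,H}\subset\Coh^{0,H}(S_{2,2})$. Moreover $\ch_0(\iota_*F)=0$, so the definition of $\nu_{\alpha,0,H}$ gives tilt-slope $\ch_2(\iota_*F)/H\ch_1(\iota_*F)$, independent of $\alpha$; the same holds for any rank-zero subobject. Hence once stability is verified, it automatically holds for every $\alpha>0$, in particular for $\alpha\gg 0$.

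The first key step is to identify all subobjects of $\iota_*F$ in $\Coh^{0,H}(S_{2,2})$. Given $0\to A\to \iota_*F\to B\to 0$ in $\Coh^{0,H}$, the long exact sequence of cohomology sheaves combined with $H^{-1}(\iota_*F)=0$ yields $H^{-1}(A)=0$ and an injection $H^{-1}(B)\hookrightarrow H^0(A)$. Since $H^{-1}(B)\in\mathcal F_{0,H}$ is torsion-free while $H^0(A)\subset \iota_*F$ is torsion, one gets $H^{-1}(B)=0$. Thus $A$ and $B$ are honest sheaves. Any such subsheaf $A\subset \iota_*F$ is annihilated by $\mathcal I_{\cer}$ and is torsion-free when restricted to the smooth curve, so $A=\iota_*F'$ for a subbundle $F'\subset F$, and similarly $B=\iota_*F''$ for its quotient.

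Next I would compute Chern characters via Grothendieck-Riemann-Roch: using $[\cer]=5H$ and $\cer^2=25H^2=100$ on $S_{2,2}$, one obtains $\ch_1(\iota_*F)=5rH$ and $\ch_2(\iota_*F)=\deg F - 50r$, so $\nu_{\alpha,0,H}(\iota_*F)=\mu(F)/20 - 5/2$, with the same formula for $\iota_*F'$. Stability of $F$ on $\cer$ therefore gives $\nu_{\alpha,0,H}(\iota_*F')<\nu_{\alpha,0,H}(\iota_*F)$ for every proper subsheaf $F'\subsetneq F$, which is precisely the tilt-stability condition.

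The only conceptually delicate point is the identification of subobjects in the tilted heart with subsheaves coming from $\cer$; the Grothendieck-Riemann-Roch calculation and the final slope comparison are then routine.
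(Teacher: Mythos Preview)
Your overall approach is sound and the Grothendieck--Riemann--Roch computation is correct, but there is a gap in the step ``$H^0(A)\subset\iota_*F$ is torsion, one gets $H^{-1}(B)=0$.'' The long exact cohomology sequence only yields $H^0(A)/H^{-1}(B)\hookrightarrow\iota_*F$, not $H^0(A)\hookrightarrow\iota_*F$; so $A=H^0(A)$ may well have positive rank (equal to $\rk H^{-1}(B)$). Concretely, any short exact sequence $0\to G'\to G\to T\to 0$ in $\Coh(S_{2,2})$ with $G'\in\mathcal F_{0,H}$, $G\in\mathcal T_{0,H}$ of the same positive rank, and $T$ a nonzero subsheaf of $\iota_*F$, exhibits $G$ as a positive-rank subobject of $\iota_*F$ in $\Coh^{0,H}(S_{2,2})$. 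Consequently your stronger claim, tilt-stability for \emph{every} $\alpha>0$, does not follow from this argument.

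The repair uses the hypothesis $\alpha\gg 0$ in an essential way. Since $H^{-1}(A)=0$, any subobject $A$ is a sheaf in $\mathcal T_{0,H}$; if $\rk(A)>0$ then $H\ch_1(A)>0$ and $\nu_{\alpha,0,H}(A)\to-\infty$ as $\alpha\to\infty$. By local finiteness of walls, for $\alpha$ beyond the outermost wall the Harder--Narasimhan filtration of $\iota_*F$ is independent of $\alpha$; any maximal destabilizing subobject there would satisfy $\nu_{\alpha,0,H}(A)>\nu_{\alpha,0,H}(\iota_*F)$ for all large $\alpha$, forcing $\rk(A)=0$. Then $A$ is torsion, the torsion-free $H^{-1}(B)$ must vanish, $A$ is a genuine subsheaf of $\iota_*F$, and your slope comparison with the stability of $F$ on $\cer$ finishes the argument. (The paper itself gives no proof of this lemma, labelling the statement as standard.)
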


Following the strategy in \cite{Fe17} and \cite{FL:clford}, we will compute $h^0(\iota_*F)$ by considering the Harder-Narasimhan factors of $\iota_*F$ with respect to $\nu_{BN}$.

\begin{Lem}[{\cite[Proposition 3.4 (a)]{Fe17}}]
For each object $E\in \Coh^{0,H}(S_{2,2})$ that is $\nu_{\alpha,0,H}$-tilt stable for some $\alpha>0$, there exists $\delta>0$ such that  there is a Harder-Narasimhan filtration for $E$ with respect to $\nu_{\alpha,0,H}$ for any $0<\alpha<\delta$:
$$0=E_0\subset E_1\subset \dots E_m=E.$$
In particular, each factor $F_i=E_i/E_{i-1}$ is Brill-Noether semistable. The slope is decreasing $\nu_{BN}^+(E)\coloneqq \nu_{BN}(F_1)\geq\dots\geq \nu_{BN}(F_n)=:\nu_{BN}^-(E)$.
\label{lem:HNforbnstab}
\end{Lem}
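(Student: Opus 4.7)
The plan is to track the Harder--Narasimhan filtration of $E$ with respect to $\nu_{\alpha,0,H}$ as $\alpha$ decreases toward $0$ along the ray $\beta = 0$, and to show that it refines only finitely many times. Below the last refinement, the filtration is constant on an open interval $(0, \delta)$, so each factor is $\nu_{\alpha,0,H}$-semistable throughout this interval, hence Brill--Noether semistable in the sense of Definition \ref{def:bnstab}.

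For every $\alpha > 0$ the HN filtration of $E \in \Coh^{0,H}(S_{2,2})$ with respect to $\nu_{\alpha,0,H}$ exists by the general theory of tilt-stability (see the discussion after Definition \ref{def:tiltslopestable}, applied with $\beta = 0$ and $\alpha > \tfrac{1}{2}\beta^2$). By Lemma \ref{lem:nestedandopen}(c), the destabilizing walls of $E$ in $\{\alpha > \tfrac{1}{2}\beta^2\}$ form a locally finite union of line segments and rays; the same holds for every subobject or quotient that appears as a Jordan--Hölder factor at such a wall. Consequently, only finitely many walls of $E$ meet any compact subinterval of the ray $\{(\alpha, 0) : \alpha > 0\}$.

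The main technical obstacle, as usual in wall-crossing arguments on a surface, is to exclude accumulation of walls at $\alpha = 0$. I would argue by induction on $\db_H(E) \in \Z_{\geq 0}$ (which is non-negative by Theorem \ref{thm:discrimbg} and integer-valued on $S_{2,2}$). By Lemma \ref{lem:discriminnatdec}, at any wall each Jordan--Hölder factor has discriminant at most $\db_H(E)$, with equality forcing all classes to be proportional to $\vb(E)$ and $\db_H$ to vanish. In the strictly-smaller-discriminant case, induction applies; in the proportional-class case with $\db_H = 0$, all putative factors lie on a single ray in the numerical lattice, so the total rank $\rk(E)$ bounds the number of further splittings. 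Together these observations show that only finitely many walls of $E$ lie on the ray $\{(\alpha, 0) : \alpha > 0\}$.

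Let $\delta > 0$ be chosen below the smallest such wall (or arbitrarily if there is none). For every $\alpha \in (0, \delta)$ the Harder--Narasimhan filtration $0 = E_0 \subset E_1 \subset \cdots \subset E_m = E$ is independent of $\alpha$, and each quotient $F_i = E_i/E_{i-1}$ is $\nu_{\alpha,0,H}$-semistable for all such $\alpha$, i.e., Brill--Noether semistable by Definition \ref{def:bnstab}. The slope ordering $\nu_{BN}(F_1) \geq \cdots \geq \nu_{BN}(F_m)$ follows by taking $\alpha \to 0^+$ in the HN slope ordering, using that $\nu_{\alpha,0,H}(F_i) \to \nu_{BN}(F_i)$ whenever the latter is finite. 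The hardest piece of the argument is the finiteness of walls near $\alpha = 0$; once that is in place, the rest is formal.
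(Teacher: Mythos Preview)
Your overall strategy---show that only finitely many walls meet the ray $\{(\alpha,0):\alpha>0\}$, so the HN filtration eventually stabilizes---is the right one, and matches what the paper is pointing to. The paper itself gives no proof; it cites \cite{Fe17} and summarizes the argument as ``there are finitely many possible classes for semistable factors, which is due to Bogomolov inequality.'' That is a \emph{direct} finiteness argument on numerical classes, not an induction.

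Your induction on $\db_H(E)$ has a gap at the step ``in the strictly-smaller-discriminant case, induction applies.'' Knowing that each Jordan--H\"older factor $G_j$ at the first wall $\alpha_1$ has (by induction) only finitely many walls does not by itself bound the walls of $E$ below $\alpha_1$. The HN filtration of $E$ for $\alpha<\alpha_1$ is not in general the concatenation of the HN filtrations of the $G_j$: as $\alpha$ decreases, the slope orderings among the pieces can interleave and recombine, so a change in the HN filtration of $E$ need not happen at a wall of any individual $G_j$. You would need an additional argument linking walls of $E$ to walls of the $G_j$ (plus the finitely many crossing values $\nu_{\alpha,0}(G_i)=\nu_{\alpha,0}(G_j)$), and making that precise recursively amounts to re-deriving the finite-class statement anyway.

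The direct route the paper invokes goes as follows. For any subobject $F\subset E$ in $\Coh^{0,H}(S_{2,2})$ one has $0<H\ch_1(F)\leq H\ch_1(E)$; inspecting the long exact cohomology sequence in $\Coh(S_{2,2})$ bounds $\ch_0(F)$ in terms of the ranks of $\mathrm H^{-1}(E)$ and $\mathrm H^0(E)$; and then the Bogomolov inequalities $\db_H(F)\geq 0$ and $\db_H(E/F)\geq 0$ confine $\ch_2(F)$ to a bounded interval. Since these invariants lie in a lattice, only finitely many classes $\vb(F)$ can occur, hence only finitely many walls. Your discriminant-decrease lemma and your $\db_H=0$ rank bound are both ingredients here, but packaging them as an induction on $\db_H(E)$ obscures the mechanism and leaves the connecting step unjustified.
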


The original statement in \cite[Proposition 3.4 (a)]{Fe17} only states for K3 surfaces. But the argument only needs that there are finitely many possible classes for semistable factors, which is due to Bogomolov inequality. So it holds for every surface.

The geometric stability conditions on $S_{2,2}$ with polarization $H$ is slightly larger than that ensured by the Bogomolov inequality. In particular, we may choose the parameter $\alpha\leq \frac{\beta^2}2$.

\begin{Def} \label{def:gcurvep2}
Let $\gamma:\R\rightarrow \R$ be a $1$-periodic function. When $x\in [0,1]$, 
\[ \gamma(x) \coloneqq \begin{cases}
\frac{1}{2}x^2 - \frac{1}2 x +\frac{1}4, & \text{if $x\in (0,1)$;} \\
0, & \text{if $x = 0$ or $1$}.
\end{cases}\]
Let $\Gamma(x)\coloneqq \frac{1}{2}x^2-\gamma(x)$. 
\end{Def}

\begin{Obs}
For any torsion-free $\mu_H$-slope stable object $E$, we have
$${\Gamma}\left(\frac{H\ch_1(E)}{H^2\rk(E)}\right)\geq \frac{\ch_2(E)}{H^2\rk(E)}.$$
The data $(\Coh^{0,H}(S_{2,2}),Z_{\alpha,\beta,H})$ parameterized by $\{(\alpha,\beta)\in\R^2|\alpha>\Gamma(\beta)\}$ form a continuous family of stability conditions on $D^b(S_{2,2})$.
\label{obs:geostabonS22}
\end{Obs}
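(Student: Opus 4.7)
The observation has two parts: a strengthened Bogomolov--Gieseker inequality for $\mu_H$-slope stable torsion-free sheaves on $S_{2,2}$, and the resulting enlargement of the chamber of geometric stability conditions. I would treat them in order.

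For the inequality, my first step is to reduce to the range $x := H\ch_1(E)/(H^2\rk(E)) \in [0,1]$ by tensoring $E$ with $\cO(-\lfloor x\rfloor H)$; the function $\Gamma$ is built precisely so that $\Gamma(x) - \ch_2(E)/(H^2\rk(E))$ is invariant under integer twists, since both $\gamma$ (hence $\Gamma - \tfrac12 x^2$) is $1$-periodic and $\ch_2/(H^2\rk) - \tfrac12 x^2$ transforms the same way. At the integer endpoints $x = 0$ and $x = 1$, the inequality $\Gamma(x) \geq \ch_2/(H^2\rk)$ coincides with the classical Bogomolov inequality $\db_H(E) \geq 0$ (Theorem~\ref{thm:discrimbg}) combined with the Hodge index theorem $(H\ch_1)^2 \geq H^2 \ch_1^2$.

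For the non-trivial range $x \in (0,1)$ I would exploit the del Pezzo geometry of $S_{2,2}$: $K_{S_{2,2}} = -H$, $\chi(\cO_{S_{2,2}}) = 1$, and $H^2 = 4$, so Hirzebruch--Riemann--Roch gives $\chi(E) = \ch_2(E) + \tfrac12 H\ch_1(E) + \rk(E)$. Since $E$ is $\mu_H$-stable with $\mu_H(E) \in (0,1)$, slope comparison with the stable line bundles $\cO$ and $\cO(-H)$ yields $\Hom(E,\cO) = 0$ and, via Serre duality with $K_S = -H$, $\Ext^2(\cO,E) = \Hom(E,\cO(-H))^* = 0$; hence $\chi(\cO,E) \leq h^0(E)$. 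Similarly $h^0(E(-H)) = 0$ since $\mu_H(E(-H)) < 0$. Restriction via $0 \to E(-H) \to E \to E|_C \to 0$ for a smooth $C \in |H|$ (an elliptic curve of degree $4$, since $C\cdot(C + K_S) = 0$) then gives $h^0(E) \leq h^0(E|_C)$. On the elliptic curve $C$, Atiyah's classification of vector bundles implies that for a semistable $F$ with $\mu(F) > 0$ one has $h^1(F) = 0$ and hence $h^0(F) = \deg F$, which would force $h^0(E|_C) \leq H\ch_1(E)$. Assembling these estimates yields $\ch_2(E) + \tfrac12 H\ch_1(E) + \rk(E) \leq H\ch_1(E)$, i.e., $\ch_2(E)/(H^2\rk(E)) \leq x/2 - 1/4 = \Gamma(x)$, as desired.

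The main obstacle is ensuring that $E|_C$ is semistable on $C$, since the elliptic Clifford bound $h^0(F) \leq \deg F$ fails for arbitrary vector bundles. This should follow from a restriction theorem for $\mu_H$-stable sheaves to a general hyperplane section $C \in |H|$; if restriction fails to be semistable one instead controls the Harder--Narasimhan filtration of $E|_C$ by bounding its maximal slope in terms of $\mu_H(E) \cdot H^2$, using $\mu_H$-stability of $E$ on the surface, and the numerical inequality survives after the bookkeeping.

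For the second claim, the strengthened inequality from part (1) is precisely the input needed to extend the standard Arcara--Bertram--Bridgeland double-tilt construction of stability conditions on surfaces: the condition $\alpha > \Gamma(\beta)$ is exactly calibrated so that $Z_{\alpha,\beta,H}(E)$ lands in the strict upper half-plane (together with the negative real axis) for every nonzero $E \in \Coh^{0,H}(S_{2,2})$, and the support property follows by combining the strengthened bound with the classical Bogomolov inequality. The continuity of the resulting family in $(\alpha,\beta)$ is then a formal consequence of the openness results recalled in Lemma~\ref{lem:nestedandopen}.
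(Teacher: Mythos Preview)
Your numerical conclusion for the first part is correct, and your treatment of the second part matches the paper's (both defer to the standard Bridgeland/Arcara--Bertram construction). However, your route to the inequality is considerably more elaborate than the paper's, and it contains a gap that is not easily patched.

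The paper's argument is a single line: after reducing to $x\in(0,1)$, observe that
\[
\chi(E(-H))=\chi(\cO(H),E)=-\ext^1(\cO(H),E)\leq 0,
\]
because $\hom(\cO(H),E)=0$ (from $\mu_H(\cO(H))=1>\mu_H(E)$) and, by Serre duality with $K_{S_{2,2}}=-H$, $\ext^2(\cO(H),E)=\hom(E,\cO)^*=0$ (from $\mu_H(E)>0$). Expanding $\chi(E(-H))=\ch_2(E)-\tfrac12 H\ch_1(E)+\rk(E)\leq 0$ gives the bound immediately. No restriction to a curve is needed.

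Your detour through $C\in|H|$ requires the bound $h^0(E|_C)\leq\deg(E|_C)$, which on an elliptic curve is \emph{equivalent} to $h^1(E|_C)=0$, i.e.\ to every Harder--Narasimhan factor of $E|_C$ having strictly positive slope. You acknowledge this and propose two patches, but neither is justified: standard restriction theorems (Flenner, Mehta--Ramanathan, Langer) need the degree of the divisor to be large relative to invariants of $E$, which $|H|$ on a degree~$4$ del Pezzo does not guarantee for arbitrary rank; and your ``HN bookkeeping'' fails because if some factor of $E|_C$ has non-positive slope, the positive-slope factors can together carry degree \emph{exceeding} $\deg(E|_C)$, so the sum $\sum_{\mu_i>0}\deg G_i$ overshoots rather than helps. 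Concretely, $\Hom_C(E|_C,\cO_C)\hookrightarrow\Ext^1_S(E,\cO(-H))\cong H^1(E)^*$, which need not vanish. The simplest repair is precisely the paper's: compute $\chi$ of the twist $E(-H)$ rather than of $E$ itself, which bypasses $h^0$ entirely.
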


\begin{proof}
The inequality for Chern characters of $\mu_H$-slope stable objects with $\frac{H\ch_1(E)}{H^2\rk(E)}\notin \Z$ is by computing 
$$0\geq -\hom(\mathcal O_{S_{2,2}}(\floor{\frac{H\ch_1(E)}{H^2\rk(E)}+1}H),E[1])=\chi(E(-\floor{\frac{H\ch_1(E)}{H^2\rk(E)}+1}H)).$$
The stability condition is then a standard construction as that in \cite{Bridgeland:K3} or the framework in \cite[Section 2]{Dulip-Toda:bgtostab}.
\end{proof}

\begin{Rem}
It is worth to mention that Bertram's nested wall theorem (Lemma \ref{lem:nestedandopen} (b)) still holds for $(\alpha,\beta)$  on the wall such that $\alpha>\Gamma(\beta)$, but one needs to be careful that in this case every point $(\alpha',\beta')$  on the line segment between $(\alpha_0,\beta_0)$ and $(\alpha,\beta)$ should also satisfy $\alpha'>\Gamma(\beta')$.
\label{rem:nestedwallforS22}
\end{Rem}

The following lemma explains that we can estimate the dimension of global sections for each Brill-Noether semistable factor.

\begin{Lem}
\label{lem:p2length}
Let $F\in \Coh^{0,H}(S_{2,2})$ be a Brill-Noether semistable object.
Then 
\begin{align*}
\hom(\mathcal O_{S_{2,2}}, F) \begin{cases}
= \rk(F) + \frac{1}2H\ch_1(F)+\ch_2(F), & \text{ when }-\frac{1}2<\nu_{BN}(F) <+\infty; \\
\leq \rk(F) +\frac{1}{4n} H\ch_1(F), & \text{ when }\nu_{BN}(F)=-\frac{n}2, n\in \Z_{>0};\\
\leq \rk(F) +\frac{2n+1}{4n^2+4n+2} H\ch_1(F)\\ \;\;\;\;+\frac{1}{2n^2+2n+1} \ch_2(F), & \text{ when }-\frac{n+1}2<\nu_{BN}(F)<-\frac{n}2, n\in \Z_{>0}.
\end{cases} 
\end{align*} 
\end{Lem}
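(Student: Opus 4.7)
The proof rests on two universal $\Ext$-vanishings and then splits into cases based on how $\nu_{BN}(F)$ compares with $\nu_{BN}(\cO_{S_{2,2}}(-H)[1])=-\tfrac12$.

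First I would establish that $\Ext^{-1}(\cO_{S_{2,2}},F)=0$ and $\Ext^{2}(\cO_{S_{2,2}},F)=0$ for any Brill--Noether semistable $F$ with finite $\nu_{BN}(F)$. The $\Ext^{-1}$ vanishing is immediate from BN semistability, since $\cO_{S_{2,2}}[1]$ is BN-stable with slope $+\infty$ while $\nu_{BN}(F)$ is finite, so $\Hom(\cO_{S_{2,2}}[1],F)=0$. The $\Ext^{2}$ vanishing uses Serre duality on the degree-$4$ del Pezzo $S_{2,2}$ (where $K_{S_{2,2}}=-H$): one has $\Ext^{2}(\cO_{S_{2,2}},F)\cong\Hom(F,\cO_{S_{2,2}}(-H))^{*}=\Ext^{-1}(F,\cO_{S_{2,2}}(-H)[1])^{*}$, which vanishes because $F$ and $\cO_{S_{2,2}}(-H)[1]$ both lie in the heart $\Coh^{0,H}(S_{2,2})$. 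Together with Hirzebruch--Riemann--Roch on the del Pezzo (using $\td_{1}(S_{2,2})=H/2$) this yields the key identity
\[
\hom(\cO_{S_{2,2}},F)=\chi(F)+\hom(F,\cO_{S_{2,2}}(-H)[1])=\rk(F)+\tfrac12 H\ch_1(F)+\ch_2(F)+\hom(F,\cO_{S_{2,2}}(-H)[1]).
\]

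Because $\cO_{S_{2,2}}(-H)[1]$ is itself BN-stable of slope $-\tfrac12$, BN semistability of $F$ rules out $\Hom(F,\cO_{S_{2,2}}(-H)[1])$ as soon as $\nu_{BN}(F)\neq -\tfrac12$. This disposes of three of the four ranges at once: for $\nu_{BN}(F)>-\tfrac12$ the identity above gives the claimed equality; for $\nu_{BN}(F)=-\tfrac{n}{2}$ with $n\geq 2$ and for $-\tfrac{n+1}{2}<\nu_{BN}(F)<-\tfrac{n}{2}$ one has $\nu_{BN}(F)<-\tfrac12$, so again $\hom(\cO_{S_{2,2}},F)=\chi(F)$, and the stated bound reduces to routine algebra (the third-range inequality rearranges to $\nu_{BN}(F)\leq -\tfrac{n}{2(n+1)}$, which is implied by $\nu_{BN}(F)<-\tfrac{n}{2}$; the second-range case with $n\geq 2$ reduces to $2n^{2}-2n+1\geq 0$).

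The genuine work is the case $\nu_{BN}(F)=-\tfrac12$ (second range with $n=1$), where $F$ and $\cO_{S_{2,2}}(-H)[1]$ share the BN-slope and $\chi(F)=\rk(F)$; it suffices to show $\hom(F,\cO_{S_{2,2}}(-H)[1])\leq \tfrac14 H\ch_1(F)$. I plan to pass to the Jordan--H\"older filtration of $F$ in the finite-length abelian subcategory of BN-semistable objects at slope $-\tfrac12$: if $k$ denotes the multiplicity of the BN-stable object $\cO_{S_{2,2}}(-H)[1]$ among the JH factors, a standard induction via short exact sequences $0\to F_1\to F\to G_1\to 0$ with $G_1$ stable yields $\hom(F,\cO_{S_{2,2}}(-H)[1])\leq k$; meanwhile every JH factor has $H\ch_1\geq 0$ and each copy of $\cO_{S_{2,2}}(-H)[1]$ contributes $H^{2}=4$, so $4k\leq H\ch_1(F)$. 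The main technical point is justifying the BN stability of $\cO_{S_{2,2}}(-H)[1]$ together with the finite-length property of the slope-slice at $-\tfrac12$; both rest on the enlarged family of geometric stability conditions provided by Observation~\ref{obs:geostabonS22}, since line bundles on a del Pezzo remain tilt-stable throughout their natural range.
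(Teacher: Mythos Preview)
Your argument for the first range $\nu_{BN}(F)>-\tfrac12$ is correct and coincides with the paper's. The genuine gap is the sentence ``BN semistability of $F$ rules out $\Hom(F,\cO_{S_{2,2}}(-H)[1])$ as soon as $\nu_{BN}(F)\neq-\tfrac12$.'' Stability only kills $\Hom(A,B)$ when the slope of $A$ \emph{exceeds} that of $B$. For $\nu_{BN}(F)<-\tfrac12$ the inequality goes the wrong way: you get $\Hom(\cO_{S_{2,2}}(-H)[1],F)=0$, not $\Hom(F,\cO_{S_{2,2}}(-H)[1])=0$. Concretely, take $F=\cO_{S_{2,2}}(-2H)[1]$ with $\nu_{BN}(F)=-1$: then $\Hom(F,\cO_{S_{2,2}}(-H)[1])=H^0(\cO_{S_{2,2}}(H))$ is $5$-dimensional. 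Your identity $\hom(\cO,F)=\chi(F)+\hom(F,\cO(-H)[1])$ is fine, but it shows $\hom(\cO,F)\geq\chi(F)$, so the equality $\hom(\cO,F)=\chi(F)$ you use throughout the range $\nu_{BN}(F)<-\tfrac12$ is false (indeed $\chi(\cO_{S_{2,2}}(-2H)[1])=-5<0$). The subsequent algebraic reductions are thus unjustified precisely where the lemma has content.

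The paper handles $\nu_{BN}(F)\leq-\tfrac12$ by a different mechanism: form $\tilde F=\mathrm{Cone}(\cO\otimes\Hom(\cO,F)\to F)$, which is again tilt-semistable at the same slope. For $\nu_{BN}(F)=-\tfrac n2$ the bound comes from the Bogomolov inequality $\db_H(\tilde F)\geq 0$; for $-\tfrac{n+1}2<\nu_{BN}(F)<-\tfrac n2$ one compares $\tilde F$ with $\cO(-nH)[1]$ and $\cO(-(n+1)H)[1]$ to obtain $\chi(\tilde F(nH))=\hom(\cO(-nH),\tilde F)\geq 0$, which unwinds to the stated inequality. Your Jordan--H\"older argument at $\nu_{BN}(F)=-\tfrac12$ is a pleasant alternative for that single boundary case, but it does not extend to smaller slopes, and the paper's $\tilde F$ argument covers all cases uniformly.
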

\begin{proof}
When $-\frac{1}2<\nu_{BN}(F) <+\infty$, since $\mathcal O_{S_{2,2}}(-H)[1]$ is Brill-Noether stable with slope $$\nu_{BN}(\mathcal O_{S_{2,2}}(-H)[1])=-\frac{1}2,$$
we have $$\hom(\mathcal O_{S_{2,2}}, F[1+i])=\hom(F,\mathcal O_{S_{2,2}}(-H)[1-i]) =0$$ for $i\geq 0$. Since $O_{S_{2,2}}[1]$ is $\nu_{\alpha,0,H}$-tilt stable for $\alpha>0$ and has slope 
$$\nu_{BN}(\mathcal O_{S_{2,2}}[1])=+\infty,$$
we have $$\hom(\mathcal O_{S_{2,2}},F[-1-i])=\hom(\mathcal O_{S_{2,2}}[1+i],F)=0$$
for $i\geq 0$. Therefore,
$$\hom(\mathcal O_{S_{2,2}}, F) =\chi(F)
= \rk(F) + \frac{1}2H\ch_1(F)+\ch_2(F).$$

When $\nu_{BN}(F)\leq -\frac{1}2$, there exists $(\alpha,\beta)$ on the line through $p_H(F)$ and $(0,0)$ such that $\alpha>\frac{1}2\beta^2$. By Lemma \ref{lem:nestedandopen}, the object $F$ is in $\Coh^{\beta,H}(S_{2,2})$ and $\nabH$-tilt semistable with $\nu_{BN}(F)=\nabH(F)=\frac{\alpha}{\beta}=\nabH(\cO_{S_{2,2}})$. The object  $$\tilde{F}\coloneqq Cone(\mathcal O_{S_{2,2}}\otimes \Hom(\mathcal O_{S_{2,2}},F) \xrightarrow{can} F)$$
is in $\Coh^{0,H}(S_{2,2})$ and therefore also in $\Coh^{\beta,H}(S_{2,2})$ by Lemma \ref{lem:nestedandopen}. In particular, the object $\tilde F$ 
is $\nabH$-tilt semistable with slope $\nabH(\tilde F)=\nu_{BN}(F)$. Since $\db_H(\tilde F)\geq 0$, we have
\begin{align*}
0\leq & (H\ch_1(F))^2-2H^2\ch_2(F)(\rk(F)-\hom(\mathcal O_{S_{2,2}},F))\\
\implies \hom(\mathcal O_{S_{2,2}},F) \leq & \rk(F)-\frac{(H\ch_1(F))^2}{2H^2\ch_2(F)}=\rk(F) -\frac{1}{8\nu_{BN}(F)} H\ch_1(F).
\end{align*}
This verifies the case when  $\nu_{BN}(F)=-\frac{n}2$ for $n\in \Z_{>0}$.

When $-\frac{n+1}2<\nu_{BN}(F)<-\frac{n}2, n\in \Z_{>0}$, note that $\mathcal O_{S_{2,2}}(-(n+1)H)[1]$ is in $\Coh^{\beta,H}(S_{2,2})$ and $\nabH$-tilt stable with slope 
$$\nabH(\mathcal O_{S_{2,2}}(-(n+1)H)[1])\rightarrow \nu_{BN}(\mathcal O_{S_{2,2}}(-(n+1)H)[1])=-\frac{n+1}2,$$
when $\beta\rightarrow 0$. We have  $$\hom(\mathcal O_{S_{2,2}}(-nH), \tilde F[1+i])=\hom(\tilde F,\mathcal O_{S_{2,2}}(-(n+1)H)[1-i]) =0$$ for $i\geq 0$. Since $O_{S_{2,2}}(-nH)[1]$ is in $\Coh^{\beta,H}(S_{2,2})$ and $\nabH$-tilt stable for with  slope 
$$\nabH(\mathcal O_{S_{2,2}}(-nH)[1])\rightarrow\nu_{BN}(\mathcal O_{S_{2,2}}(-nH)[1])=-\frac{n}2,$$
when $\beta\rightarrow 0$. We have $$\hom(\mathcal O_{S_{2,2}}(-nH),\tilde F[-1-i])=\hom(\mathcal O_{S_{2,2}}(-nH)[1],\tilde F[-i])=0$$for $i\geq 0$. Therefore,
\begin{align*}
0\leq & \hom(\mathcal O_{S_{2,2}}(-nH),\tilde F)=\chi(\tilde F(nH))\\ =& \ch_2(F) +(n+\frac{1}2)H\ch_1(F)+\frac{(n^2+n)H^2+2}2(\rk(F)-\hom(\mathcal O_{S_{2,2}},F))\\
\implies & \hom(\mathcal O_{S_{2,2}},F))  \leq  \rk(F) +\frac{2n+1}{4n^2+4n+2} H\ch_1(F)+\frac{1}{2n^2+2n+1} \ch_2(F).
\end{align*}
We finish the claim for all cases.
\end{proof}

The following property decides the bounds for the Brill-Noether slopes of each Harder-Narasimhan factors of $\iota_*F$.

\begin{Prop}
Let $F$ be a slope stable vector bundle on $C_{2,2,5}$ with slope $\mu(F)\in (0,10]\cup [30,40]$, then 
\begin{itemize}
\item $\iota_*F$ is Brill-Noether semistable when $\mu\leq 30-20\sqrt 2;$ 
\item $\nu_{BN}^+(\iota_*F)\leq \begin{cases}
\frac{1}2-\frac{5}\mu, & \text{ when }\mu\in (30-20\sqrt 2,10];\\
\frac{3}2-\frac{25}\mu, & \text{ when }\mu\in [30,10+20\sqrt 2];\\
\frac{23\mu-610}{12\mu-140}, & \text{ when }\mu\in [10+20\sqrt 2,39];\\
\frac{\mu}8-4, & \text{ when }\mu\in [39,40].
\end{cases} $
\item $\nu_{BN}^-(\iota_*F)\geq \begin{cases}
\frac{490-9\mu}{2\mu-200}, & \text{ when }\mu\in (30-20\sqrt 2,10];\\
\frac{450-7\mu}{2\mu-200}, & \text{ when }\mu\in [30,10+20\sqrt 2];\\
-\frac{3}2, & \text{ when }\mu\in [10+20\sqrt 2,40].
\end{cases} $
\end{itemize}
\label{prop:boundsforBNofiotaF}
\end{Prop}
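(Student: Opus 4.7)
The plan is to bound $\nu_{BN}^{\pm}(\iota_*F)$ by a wall-crossing analysis for $\iota_*F$ in the tilt-stability plane of the del Pezzo surface $S_{2,2}$. Writing $\vb(\iota_*F) = (0,\, 20r,\, r(\mu-50))$, one has $\nu_{\alpha,\beta,H}(\iota_*F) = (\mu-50)/20$ constant in $(\alpha,\beta)$; by Lemma \ref{lem:iotaFstable} together with Lemma \ref{lem:HNforbnstab}, the Brill--Noether HN filtration of $\iota_*F$ coincides with the $\nu_{\alpha,0,H}$-HN filtration for $0 < \alpha \ll 1$, so it suffices to control the class of the first (respectively last) HN factor.

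Let $A \hookrightarrow \iota_*F$ in $\Coh^{0,H}(S_{2,2})$ realise $\nu_{BN}^+(\iota_*F)$, with class $\vb(A) = (v_0, v_1, v_2)$. If $v_0 = 0$ then $\nu_{BN}(A) = \nu_{BN}(\iota_*F)$ and there is nothing to prove, so assume $v_0>0$. A direct computation shows that the wall $W(A,\iota_*F)$ is the line of slope $m := (\mu-50)/20$ in the $(\alpha,\beta)$-plane passing through $p_H(A) = (v_2/v_0,\, v_1/v_0)$; by Lemma \ref{lem:nestedandopen}(b)--(b$'$), at every point of $W \cap \{\alpha > \Gamma(\beta)\}$ both $A$ and $\iota_*F$ are $\nu_{\alpha,\beta,H}$-semistable with equal slope, and a genuine destabilisation requires this intersection to be non-empty.

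Bounding $s := v_2/v_1$ is then an extremal problem subject to: (a) $0 \leq v_1 \leq 20r$ from $A,\, B=\iota_*F/A \in \Coh^{0,H}$; (b) the refined Bogomolov $v_2/v_0 \leq \Gamma(v_1/v_0)$ for $A$ (and its analogue for $B$), coming from Observation \ref{obs:geostabonS22}; and (c) the intersection condition on $W$. Because $\Gamma$ is piecewise linear with slope $n+\tfrac12$ on $[n,n+1]$ and corner heights $n^2/2$ at integer $\beta=n$, the extremal wall in each range of $\mu$ is pinned by passing through one or two corners of the sawtooth $\alpha=\Gamma(\beta)$. Solving the resulting linear system in $(v_0,v_1,v_2)$ for each admissible pivot produces the piecewise bounds of the proposition; the transition values such as $30-20\sqrt2$ and $10+20\sqrt2$ arise as solutions of the quadratic equations obtained by equating consecutive bound formulae (or, for $30-20\sqrt2$, by equating $s$ with $\nu_{BN}(\iota_*F)=(\mu-50)/20$, which is precisely what forces Brill--Noether semistability for $\mu\leq 30-20\sqrt2$). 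The lower bounds on $\nu_{BN}^{-}(\iota_*F)$ follow by the symmetric argument for the last HN factor, equivalently via the derived dual $\mathbb{D}(\iota_*F)\simeq \iota_*(F^\vee\otimes \omega_C)[1]$ on $S_{2,2}$, which swaps sub- and quotient-factors and reverses the sign of BN-slopes.

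The main obstacle is the combinatorial step of identifying, for each interval of $\mu$, the specific corner-configuration on $\Gamma$ realised by the extremal wall, and verifying that no admissible integer class $(v_0,v_1,v_2)$ satisfying (a)--(b) yields a larger $s$. The crucial ingredient for excluding formally admissible but geometrically impossible candidates is the adjunction $\Hom(A,\iota_*F) = \Hom(L\iota^*A, F)$ combined with the $\mu_H$-stability of $F$ on $C_{2,2,5}$: a candidate whose restriction to $C$ would have slope exceeding $\mu$ cannot embed in $\iota_*F$, and this is exactly what makes the bound sharp only at the distinguished slope values recorded in Remark \ref{rem:boundforclf}.
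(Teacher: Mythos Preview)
Your overall framework matches the paper's: analyse the first wall of $\iota_*F$ on $S_{2,2}$ using the refined $\Gamma$-curve of Observation~\ref{obs:geostabonS22}, then read off $\nu_{BN}^{\pm}$ from the endpoints of that wall. But the decisive constraint is missing. In the paper (Lemma~\ref{lem:fwforiotaF}) the wall is bounded by showing that its \emph{width} satisfies $\beta_2-\beta_1\leq 5$, where $\beta_1<0<\beta_2$ are the intersections of the wall with the $\Gamma$-curve. This comes from the long exact sequence of cohomology sheaves
\[
0\to H^{-1}(F_1)\to F_2\to \iota_*F\to H^0(F_1)\to 0
\]
and the observation that, since $F$ has rank $r$ on $C_{2,2,5}\in|5H|$, the image of $F_2$ in $\iota_*F$ has rank at most $s+t$ on $C$ (with $s=\rk F_2$ and $5tH=\ch_1(T(F_2))$), forcing
\[
\mu_H\!\bigl(F_2/T(F_2)\bigr)-\mu_H\!\bigl(H^{-1}(F_1)\bigr)\leq 5.
\]
Since $F_2/T(F_2)\in\Coh^{\beta_2-\epsilon,H}$ and $H^{-1}(F_1)\in\Coh^{\beta_1+\epsilon,H}$ along the wall, this yields $\beta_2-\beta_1\leq 5$. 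All of the explicit bounds in the statement, including the thresholds $30-20\sqrt2$ and $10+20\sqrt2$, are then obtained by sliding a line of slope $1/(\tfrac{\mu}{20}-\tfrac{5}{2})$ of width $\leq 5$ against the $\Gamma$-curve.

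Your constraints (a)--(c) do not recover this. Constraint (b) as you state it is not available for the quotient $B$: the refined inequality $\ch_2/\rk\leq\Gamma(\mu_H)$ of Observation~\ref{obs:geostabonS22} is proved for $\mu_H$-stable \emph{sheaves}, whereas $B$ has negative rank; for tilt-semistable objects one only has the ordinary $\db_H\geq 0$, which together with (a) and (c) leaves $s=v_2/v_1$ unbounded. Your proposed substitute, the adjunction $\Hom(A,\iota_*F)\cong\Hom(L\iota^*A,F)$ with stability of $F$, does not close the gap either: when $A$ is not locally free the $\mathcal{T}or_1$-term contributes, so $\Hom(A|_C,F)=0$ does not force $\Hom(A,\iota_*F)=0$; and even when $A$ is locally free, $A|_C$ need not be semistable, so $\mu_C(A|_C)>\mu(F)$ does not by itself kill all maps to $F$. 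The actual geometric input is the rank comparison on $C$ described above, not a slope comparison after restriction.
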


To prove the proposition, we need estimate the first wall of $\iota_*F$.
\begin{Lem}\label{lem:fwforiotaF}
Adopt the notations in Proposition \ref{prop:boundsforBNofiotaF}, if $\mu\in(0,30-20\sqrt 2]$, then $\iota_*F$ is Brill-Noether semistable.
Otherwise, suppose $\iota_*F$ becomes strictly $\nu_{\alpha,0,H}$-tilt semistable for some $\alpha>0$. Then,
\begin{equation*}
\alpha\leq \begin{cases}
\frac{3\mu}{20}-\frac{\mu^2}{400}-\frac{1}4, & \text{ when } \mu\in(30-20\sqrt 2,10]; \\
\frac{\mu}5-\frac{\mu^2}{400}-\frac{5}4, & \text{ when } \mu\in[30,10+20\sqrt 2];\\
\frac{3\mu}{20}-3, & \text{ when } \mu\in(10+20\sqrt 2,40]. 
\end{cases}
\end{equation*}
\end{Lem}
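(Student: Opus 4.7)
I would first compute $\ch(\iota_{*}F)$ via Grothendieck--Riemann--Roch on $\iota\colon C_{2,2,5}\hookrightarrow S_{2,2}$. Using $H^{2}=4$, $[C_{2,2,5}]=5H$ and $K_{C_{2,2,5}}=(K_{S_{2,2}}+5H)|_{C}=4H|_{C}$, one obtains $\ch_{0}(\iota_{*}F)=0$, $H\ch_{1}(\iota_{*}F)=20r$ and $\ch_{2}(\iota_{*}F)=r(\mu-50)$, hence the constant-in-$\alpha$ slope $\nu_{\alpha,0,H}(\iota_{*}F)=(\mu-50)/20$.

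Next, suppose $\iota_{*}F$ becomes strictly $\nu_{\alpha_{0},0,H}$-semistable at some $\alpha_{0}>0$, and pick a destabilizing subobject $A\hookrightarrow\iota_{*}F$ in $\Coh^{0,H}(S_{2,2})$ with quotient $B$; set $(a,c,d)=(\ch_{0}(A),H\ch_{1}(A),\ch_{2}(A))$. The wall equation $\nu_{\alpha_{0},0,H}(A)=(\mu-50)/20$ rearranges to
\[
\alpha_{0}\;=\;\frac{d}{4a}\;-\;\frac{(\mu-50)c}{80a}\qquad(a\ne 0).
\]
The case $a=0$ is ruled out: then $A$ is a one-dimensional subsheaf of $\iota_{*}F$ supported on the smooth curve $C_{2,2,5}$, hence of the form $\iota_{*}F'$ for a proper subbundle $F'\subsetneq F$, and slope-stability of $F$ forces $\nu_{\alpha,0,H}(A)<\nu_{\alpha,0,H}(\iota_{*}F)$ (combined with Lemma~\ref{lem:iotaFstable} to dispose of equality at $\alpha\gg 0$). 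The case $a<0$ is handled by symmetry, passing to the positive-rank quotient $B$.

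For the main case $a>0$, tilt-semistability of $A$ and $B$ combined with Theorem~\ref{thm:discrimbg} yields two Bogomolov upper bounds $d\le c^{2}/(8a)$ and $d\le r(\mu-50)+(20r-c)^{2}/(8a)$, which coincide on the line $5c=a(\mu-50)+50r$. Substituting the active upper bound into the wall formula presents $\alpha_{0}$ as a quadratic function of $c/a$ in Case I (below the line) or of $(20r-c)/a$ in Case II. To remove the residual $r$-dependence I would invoke the stronger Bogomolov--Gieseker inequality of Observation~\ref{obs:geostabonS22}, refining the first Bogomolov bound to $d\le c^{2}/(8a)-4a\gamma(c/(4a))$ (with the analog for $B[-1]$), and couple it with Bertram's nested-wall theorem extended to the enlarged region $\alpha>\Gamma(\beta)$ as in Remark~\ref{rem:nestedwallforS22}. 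The requirement $\Hom(A,\iota_{*}F)\ne 0$, i.e.\ $H^{0}(C,F\otimes A^{\vee}|_C)\ne 0$, together with slope-stability of $F$, restricts $A$ to a specific short list of rank-one tilt-stable sheaves (for instance $A=\mathcal{O}_{S_{2,2}}(\ell_{0}-\ell_{1})$ at $\mu=10$ with $c/(4a)=1/2$, and $A=\mathcal{O}_{S_{2,2}}(2H)$ at $\mu=40$ with $c/(4a)=2$), from which $\alpha_{0}$ is read off.

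The main technical hurdle is the ensuing case analysis matching this bound to the three asserted branches. The transitions at $\mu=30$ and $\mu=10+20\sqrt{2}$ should correspond to the extremal $c/(4a)$ taking the values in $\{1/2,3/2,2\}$ (the points where $\gamma$ is maximal at $1/8$ or vanishes), producing the three quadratic formulas; while the threshold $\mu=30-20\sqrt{2}$ is the smaller positive root of $3\mu/20-\mu^{2}/400-1/4=0$, so for $\mu\le 30-20\sqrt{2}$ the bound becomes non-positive, precluding any wall in $\alpha>0$ and immediately yielding the Brill--Noether semistability stated in the first sentence of the lemma.
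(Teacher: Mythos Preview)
Your Chern character computation and the treatment of the $a=0$ case are fine, and you correctly identify that the Bogomolov bounds on $A$ and $B$ alone leave a residual $r/a$-dependence that must be eliminated. However, the step you propose to eliminate it is where the argument breaks down. The claim that ``$\Hom(A,\iota_{*}F)\ne 0$ together with slope-stability of $F$ restricts $A$ to a specific short list of rank-one tilt-stable sheaves'' is unjustified: there is no reason the destabilizing subobject $A$ should have rank one, and the condition $\Hom(A,\iota_{*}F)\ne 0$ is automatic for a subobject. Even granting $H^{0}(C,F\otimes A^{\vee}|_{C})\ne 0$, slope-stability of $F$ only bounds $\mu^{-}(A|_{C})$ from above, and $A|_{C}$ need not be semistable, so this yields no usable numerical constraint on $c/(4a)$.

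The paper's argument supplies exactly the missing geometric input, but by an entirely different mechanism. Writing the destabilizing sequence as $0\to F_{2}\to\iota_{*}F\to F_{1}\to 0$ and taking cohomology sheaves, one gets
\[
0\to\mathrm H^{-1}(F_{1})\to F_{2}\to\iota_{*}F\to\mathrm H^{0}(F_{1})\to 0
\]
in $\Coh(S_{2,2})$, with $\mathrm H^{-1}(F_{1})$ and $F_{2}$ both of rank $s$. Since the map $F_{2}\to\iota_{*}F$ must cover the kernel of $\iota_{*}F\twoheadrightarrow\mathrm H^{0}(F_{1})$, and $F$ has rank $r$ on the curve, one obtains (after separating the torsion part $T(F_{2})$ with $\ch_{1}(T(F_{2}))=5tH$ and $\ch_{1}(\mathrm H^{0}(F_{1}))=5aH$) the inequality $r-a\le s+t$. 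This translates into
\[
\mu_{H}\bigl(F_{2}/T(F_{2})\bigr)-\mu_{H}\bigl(\mathrm H^{-1}(F_{1})\bigr)\;\le\;5.
\]
Since both $F_{1}$ and $F_{2}$ are tilt-semistable along the entire wall, running the wall to its endpoints on the $\Gamma$-curve forces the two intersection points $(\Gamma(\beta_{i}),\beta_{i})$ to satisfy $\beta_{2}-\beta_{1}\le 5$. The three branches of the lemma then follow from an elementary computation: the line through $(\Gamma(\beta_{2}),\beta_{2})$ and $(\Gamma(\beta_{2}-5),\beta_{2}-5)$ always has slope $1/(\beta_{2}-\tfrac{5}{2})$, and matching this to $1/\nu_{BN}(\iota_{*}F)$ and reading off the $\alpha$-intercept gives the stated bounds. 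The number $5$ here is $[C_{2,2,5}]=5H$, and this rank-on-the-curve constraint is the genuine content of the lemma; it cannot be recovered from Bogomolov-type inequalities on $S_{2,2}$ alone.
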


\begin{figure}[hbt!]
\begin{center}
\scalebox{0.6}{

\begin{tikzpicture}[x=30pt,y=30pt]
\tikzset{%
    add/.style args={#1 and #2}{
        to path={%
 ($(\tikztostart)!-#1!(\tikztotarget)$)--($(\tikztotarget)!-#2!(\tikztostart)$)%
  \tikztonodes},add/.default={.2 and .2}}
}

\newcommand\XA{1}

\draw[->] (-2,0) -- (12.5,0)node[above left] {$\frac{\ch_2}{H^2\ch_0}$};

\draw[->] (0,-5)-- (0,0) node [right] {O} --  (0,2.5) node[above left] {$\frac{H\ch_1}{H^2\ch_0}$};

\draw[rotate=-90] (0,0) parabola (5,12.5);
\draw[rotate=-90] (0,0) parabola (-2.5,3.125);

\coordinate (O) at (0,0);
\coordinate(O1) at (0.5*\XA,1);
\coordinate(O2) at (2,2);
\coordinate(O-1) at (0.5*\XA,-1);
\coordinate(O-2) at (2,-2);
\coordinate(O-3) at (4.5,-3);
\coordinate(O-4) at (8,-4);
\coordinate(O-5) at (12.5,-5);
\node at (O1) {$\bullet$};
\node at (O) {$\bullet$};
\node at (O2) {$\bullet$};
\node at (O-1) {$\bullet$};
\node at (O-2) {$\bullet$};
\node at (O-3) {$\bullet$};
\node at (O-4) {$\bullet$};
\node at (O-5) {$\bullet$};
\node[right] at (O1) {$p_H(O(H))$};
\node[right] at (O-1) {$p_H(O(-H))$};

\draw[red] (3,2.5) node[below right] {$\Gamma$ curve}--(7/4,2)--(2,2)--(7/4,2)--(1/4,1)--(1/2,1)--(1/4,1)--(-1/4,0)--(0,0)--(-1/4,0)--(1/4,-1)--(1/2,-1)--(1/4,-1);
\draw[red](1/4,-1)--(7/4,-2)--(O-2);
\draw[red](7/4,-2)--(17/4,-3)--(O-3);
\draw[red](17/4,-3)--(31/4,-4)--(O-4);
\draw[red](31/4,-4)--(49/4,-5)--(O-5);

\draw [add= 0.3 and 0.2] (O-3) to (O2) node[above]{$\mu=40$};
\draw [add= 0.13 and 0.18] (1,1.5) to (6,-3.5) node[below]{$\mu=30$};
\draw [add= 0.4 and 0.5,dashed] (O) to (O2) node[above]{$\nu^+_{BN}(\iota_*F)$};
\draw [add= 0.1 and 0.6,dashed] (O-3) to (O) node[above]{$\nu^-_{BN}(\iota_*F)$};

\draw [add= 0.6 and 1.1] (O-3) to (2.742,0) node[above]{$\mu=10+2\sqrt{2}$};

\draw [add= 0.1 and .2] (10,-4.5) to (0,0.5) node[above right]{$\mu=10$};
\draw [add= 0.1 and .2] (11.863,-4.914) to (0,0) node[below]{$\mu=30-20\sqrt 2$};
\end{tikzpicture}
}
\end{center}
\caption{The potential first wall of $\iota_*F$.} \label{fig:firstwallforiotaF}
\end{figure}
\begin{proof}
We write $r$ for the rank of $F$, the Chern characters of $\iota_*F$ are as follows:
$$(\ch_0(\iota_*F),\ch_1(\iota_*F),\ch_2(\iota_*F))=(0,5rH,-r\frac{(5H)^2}{2}+r\mu=r(\mu-50)).$$
Let $0\rightarrow F_2\rightarrow \iota_*F\rightarrow F_1\rightarrow 0$ in $\Coh^{0,H}(S_{2,2})$ be the destabilizing sequence with respect to $\nu_{\alpha,0,H}$, then there is an exact sequence in $\Coh (S_{2,2})$.
\begin{center}
\begin{tikzcd}[row sep=tiny]
0\arrow[r] & \HH^{-1}(F_1) \arrow[r] 
& F_2 \arrow[r] & \iota_*F \arrow[r] & \HH^{0}(F_1) \arrow[r] & 0. \\
\text{rank} & s & s & 0 & 0 \\
\ch_1 & D_1 & D_2 & 5rH & 5aH
\end{tikzcd}
\end{center}

If $s=0$, then $\HH^{-1}(F_1)=0$ as it is torsion free. Since $F_2$ and $\iota_*F$ have the same $\nu_{\alpha,0,H}$ slope, we must have $\ch(\iota_*F) = k\ch(F_2)$ for some real number $k > 0$, this will violate the stability assumption on $F$. Thus, we may assume $s \neq 0$. 

Let $T(F_2)$ be the maximal torsion subsheaf of $F_2$. Without loss of generality  we may assume that it is supported on $C_{2,2,5}$ with $\ch_1(T(F_2)) = 5tH$. Since $F$ is of rank $r$, to make the sequence exact at the term $\iota_*F$, we must have 
\begin{equation*}
r-a \leq \rank \big(\iota_*T(F_2)\big) + \rank\big( F_2/T(F_2)\big) = s+t. 
\end{equation*}
Therefore, 
\begin{equation}\label{eq:diffofF}
\frac{H\ch_1\big(F_2/T(F_2)\big)}{sH^2} - \frac{H\ch_1\big(H^{-1}(F_1)\big)}{sH^2} = \frac{D_2H-5tH^2 -D_1H}{sH^2} = \frac{5r-5a-5t}{s} \leq 5.
\end{equation}
By Lemma \ref{lem:nestedandopen}, the objects $F_1$ and $F_2$ are $\nu_{\alpha',\beta',H}$-tilt semistable of the same phase as $\iota_*E$ for any $(\alpha',\beta')$ along the wall $W$ through $(\alpha,0)$ with slope $1/\nu_{BN}(\iota_*F)=1/(\frac{\mu}{20}-\frac{5}2)$. Let $(\alpha_1,\beta_1)$ and $(\alpha_2,\beta_2)$ be the intersection points of $W$ and the curve $\Gamma$, more precisely, 
\begin{align*}
\beta_1\coloneqq \max\{y<0|\Gamma(y)\geq (\frac{\mu}{20}-\frac{5}2)y+\alpha\};\;\;\;
\beta_2\coloneqq \min\{y>0|\Gamma(y)\geq (\frac{\mu}{20}-\frac{5}2)y+\alpha\}.
\end{align*}
By Lemma \ref{lem:nestedandopen} and Remark \ref{rem:nestedwallforS22}, the object $F_2/T(F_2)$ is in the heart $\Coh^{\beta_2-\epsilon,H}X$  and $\HH^{-1}(F_1)$ is in the heart $\Coh^{\beta_1+\epsilon,H}X$  for sufficiently small $\epsilon > 0$. Thus by definition of the tilting heart and (\ref{eq:diffofF}),
we have 
\begin{align}
\beta_2-\beta_1\leq 5.
\end{align}
Now we have reduced the first wall through $(\alpha,0)$ to an elementary computation. Note that the line through $(\Gamma(\beta_2),\beta_2)$ and $(\Gamma(\beta_2-5),\beta_2-5)$ always has slope $1/(\beta_2-\frac{5}2)$. 
\begin{itemize}
\item When $0<\mu\leq 10$, note that the line through $(\Gamma(\frac{\mu}{20}),\frac{\mu}{20})$ and $(\Gamma(\frac{\mu}{20}-5),\frac{\mu}{20}-5)$ has equation 
$$X-\frac{\mu}{40}+\frac{1}4=(\frac{\mu}{20}-\frac{5}2)(Y-\frac{\mu}{20}).$$
It passes through the point $(\frac{3\mu}{20}-\frac{\mu^2}{400}-\frac{1}4,0).$ The object $\iota_*F$ cannot become strictly $\nu_{\alpha,0,H}$-tilt semistable for any $\alpha>\max\{0,\frac{3\mu}{20}-\frac{\mu^2}{400}-\frac{1}4\}$.
\item When $30\leq \mu \leq 40$, two different types of lines are possible to be the first wall as shown in Figure \ref{fig:firstwallforiotaF}. We list their equations as follows:
\begin{enumerate}
\item The line through $(\Gamma(-3),-3)$ with slope $1/(\frac{\mu}{20}-\frac{5}2)$ has equation 
$(\frac{\mu}{20}-\frac{5}2)(Y+3)=X-\frac{9}2.$
It passes through $(\frac{3\mu}{20}-3,0)$.
\item 
The line through $(\Gamma(\frac{\mu}{20}),\frac{\mu}{20})$ and $(\Gamma(\frac{\mu}{20}-5),\frac{\mu}{20}-5)$  has equation 
$X-\frac{3\mu}{40}+\frac{5}4=(\frac{\mu}{20}-\frac{5}2)(Y-\frac{\mu}{20}).$
It passes through $(\frac{\mu}{5}-\frac{\mu^2}{400}-\frac{5}4,0)$.
\end{enumerate}
The object $\iota_*F$ cannot become strictly $\nu_{\alpha,0,H}$-tilt semistable for any $\alpha>\max\{\frac{3\mu}{20}-3,\frac{\mu}{5}-\frac{\mu^2}{400}-\frac{5}4\}$.
\end{itemize}
\end{proof}

\begin{proof}[Proof for Proposition \ref{prop:boundsforBNofiotaF}]
Suppose $\iota_*F$ is not Brill-Noether semistable. Let the Harder-Narasimhan filtration of $\iota_*F$ with respect to $\nu_{BN}$ be 
$$0=E_0\subset E_1\subset \dots \subset E_{m-1}\subset E_m=\iota_*F.$$

Note that $\rk(E_1) > 0$, since otherwise for any $\alpha>0$,
$$\nu_{\alpha,0}(E_1)\geq \nu_{BN}(E_1)>\nu_{BN}(\iota_*F)=\nu_{\alpha,0}(\iota_*F),$$
this is not possible as $E_1$ is a subobject of $\iota_*F$ in $\Coh^{0,H}(S_{2,2})$. 

The line through $p_H(E_1)$ with slope $1/\nu_{BN}(\iota_*F)$ passes through $(\alpha,0)$ for some $\alpha\leq 0$ or satisfying the inequality in Lemma \ref{lem:fwforiotaF}. Together with the constrain that $\rk(E_1)>0$, the slope $\nu_{BN}(E_1)$ can only achieve maximum when both $\alpha$ and $\beta_2$ reach their maximums and $\frac{H\ch_1(E_1)}{H^2\rk(E_1)}=\beta_2$. 
It is a direct computation that 
\begin{align*}
\beta_2=\begin{cases}
\frac{\mu}{20}, & \text{ when }\mu\in (30-20\sqrt 2,10] \cup [30,10+20\sqrt 2];\\
\frac{3\mu-35}{80-\mu}, & \text{ when }\mu\in [10+20\sqrt 2,39];\\
2, & \text{ when }\mu\in [39,40].
\end{cases}
\end{align*}
To compute $\nu_{BN}(E_1)$, the only special case is when $\mu\in[39,40]$, in this case $\beta_1=-3$,  we have $\frac{\ch_2(E_1)}{H^2\rk(E_1)}\leq \frac{\mu}4-8$. The other cases are by computing $\frac{\Gamma(\beta_2)}{\beta_2}$ directly.

As for the $\nu_{BN}(E_m/E_{m-1})$, one may use the same argument and reduce it to the computation of $\frac{\Gamma(\beta_1)}{\beta_1}$.
\end{proof}

Define the function $\clubsuit \colon (x,y) \in \mathbb{H} = \mathbb{R}  \times \mathbb{R}^{>0} \rightarrow \mathbb{R}^{>0}$ as follows \begin{align*}
\clubsuit  (x,y)\coloneqq\begin{cases}
 \frac{y}2+x, & \text{ when } \frac{x}y>-\frac{1}2; \\
\frac{y}{4n} , & \text{ when }\frac{x}y=-\frac{n}2, n\in \Z_{>0};\\
\frac{2n+1}{4n^2+4n+2} y +\frac{1}{2n^2+2n+1}x, & \text{ when }-\frac{n+1}2<\frac{x}y<-\frac{n}2, n\in \Z_{>0}.
\end{cases} 
\end{align*} 
Note that the value of $\clubsuit$ is always positive, therefore well defined.

\begin{Lem} \label{lem:polygonmax}
	Let $O=(0,0)$ be the origin, let  $P=(x_p,y_p)$ and $Q=(x_q,y_q)$ be two points on $\H$ such that $\frac{x_p}{y_p}<\frac{x_q}{y_q}$ and $y_p>y_q$. Consider all collections of points $P_0=O$, $P_1$,  $\dots$, $P_n=P$ in the triangle $OQP$ such that $P_0P_1\dots P_nP_0$ forms a convex polygon, then the sum 
    \begin{equation}
    \label{eq:sum}\sum^{k=n}_{k=1}\clubsuit(\overrightarrow{P_{k-1}P}_k)
    \end{equation}
   can achieve its maximum when either $n=1$ or $2$. In addition, when $n=2$, the point $P_1=(x_1,y_1)$ can be chosen on the line segment $OQ$ ($QP$, respectively) unless $\frac{x_1}{y_1}=-\frac{n}2$ ($\frac{x_p-x_1}{y_p-y_1}=-\frac{n}2$, respectively) for some $n\in\Z_{>0}$.
\end{Lem}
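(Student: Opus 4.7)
The plan is to leverage two structural features of $\clubsuit$: $1$-homogeneity, and linearity on each of the cones $C_n := \{(x,y)\in\H : -\tfrac{n+1}{2} < x/y < -\tfrac{n}{2}\}$ for $n\geq 1$ (with the convention $C_0 = \{(x,y)\in\H : x/y > -\tfrac12\}$) and on each ray $R_n := \{(x,y)\in\H : x/y = -n/2\}$ for $n\geq 1$. A direct calculation at the boundary slopes shows that the value of $\clubsuit$ on $R_n$ strictly exceeds its limits from either adjacent cone $C_{n-1}$ or $C_n$, so $\clubsuit$ is upper semicontinuous with positive ``spikes'' on each $R_n$.

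The first reduction is merging. If two consecutive edges $\overrightarrow{P_{k-1}P_k}$ and $\overrightarrow{P_k P_{k+1}}$ both lie in the same cone $C_n$ (or on the same ray $R_n$), then linearity yields
\[
\clubsuit(\overrightarrow{P_{k-1}P_k}) + \clubsuit(\overrightarrow{P_k P_{k+1}}) = \clubsuit(\overrightarrow{P_{k-1}P_{k+1}}),
\]
so $P_k$ can be removed without changing \eqref{eq:sum}. Hence at a maximizer, no two consecutive edges lie in the same region.

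Next, I would analyze one intermediate vertex at a time. Fix $0 < k < n$ and hold the other $P_j$'s. The admissible region $A_k \subset \R^2$ for $P_k$ is the intersection of the triangle $OQP$ with the two half-planes required to preserve convexity of $P_0 P_1 \cdots P_n P_0$; it is itself a convex polygon. On $A_k$, the sum \eqref{eq:sum} is a piecewise linear function of $P_k$ with upward jumps across the critical rays $R_m \cap A_k$. Consequently, any local maximum places $P_k$ either on $\partial A_k$ or on some $R_m$. The boundary $\partial A_k$ consists of arcs of $\partial(\triangle OQP)$ together with collinearity lines that force $P_k$ to lie on an extension of a neighbouring edge; maxima of the latter type permit further merging as in the previous step.

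The last and main step is the reduction from arbitrary $n$ to $n \leq 2$. After the previous steps, a maximizer has each intermediate vertex either on $\partial(\triangle OQP)\setminus\{O,Q,P\}$ or on a critical ray $R_m$. Using the hypotheses $\frac{x_p}{y_p} < \frac{x_q}{y_q}$ and $y_p > y_q$ together with the strict monotonicity of edge slopes (forced by convexity and by the non-repetition of regions), I would run a case analysis showing that at most one intermediate vertex can lie on the segment $OQ$, at most one on $QP$, and that a single critical ray can contribute at most one intermediate vertex before monotonicity of the edge slopes breaks. Consequently $n \leq 2$, and when $n = 2$ the vertex $P_1$ lies on $OQ$, on $QP$, or on some $R_m$, exactly as claimed. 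The main obstacle is this final case analysis: the discontinuities of $\clubsuit$ on the critical rays rule out a clean off-the-shelf convex-analysis argument, so one must carefully enumerate the possible boundary/ray placements compatible with the convex polygon constraint inside $\triangle OQP$.
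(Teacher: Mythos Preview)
Your approach has a genuine gap, and it misses the key simplification the paper uses.

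First, a technical slip: in your second step, the lines across which $\clubsuit(\overrightarrow{P_{k-1}P_k})+\clubsuit(\overrightarrow{P_kP_{k+1}})$ has its jumps are not the rays $R_m$ through the origin, but their translates through $P_{k-1}$ and through $P_{k+1}$ (since the slope that matters is that of the \emph{edge vector}, not of $P_k$ itself). This does not by itself wreck the argument, but it means your description of the critical set is wrong, and in particular the configurations you reach in step~2 are more varied than ``$P_k$ on $\partial(\triangle OQP)$ or on some $R_m$''.

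More seriously, your step~3 is only a plan, not a proof. You acknowledge the case analysis is the ``main obstacle'' and do not carry it out. There is no a~priori reason why convexity and ``non-repetition of regions'' force $n\le 2$: different edges can sit in different cones $C_m$, so there could in principle be many intermediate vertices, each pinned to a different translated critical line. Showing this cannot happen requires real work that is absent from your write-up.

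The paper avoids this entirely with a one-line \emph{parallel sliding} trick. Take three consecutive vertices $A=P_{k-1}$, $B=P_k$, $C=P_{k+1}$ and insert $A'\in AB$, $C'\in BC$ with $A'C'\parallel AC$. Because each of $\overrightarrow{AA'}$, $\overrightarrow{A'C'}$, $\overrightarrow{C'C}$ keeps its direction while its length varies linearly in the slide parameter, and because $\clubsuit$ is $1$-homogeneous, the three-term sum $\clubsuit(\overrightarrow{AA'})+\clubsuit(\overrightarrow{A'C'})+\clubsuit(\overrightarrow{C'C})$ is exactly linear in that parameter. Its two endpoints are the two-edge path $A\to B\to C$ and the single edge $A\to C$, so the maximum is attained at one of these. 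Iterating this collapses any $n>2$ polygon down to $n\le 2$ without any case analysis. The paper's second toy model (sliding a single vertex along a fixed segment) then handles the positioning of $P_1$ when $n=2$, which is essentially your step~2 restricted to the relevant one-dimensional boundary.

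In short: your merging step and your single-vertex variation are reasonable observations, but they do not by themselves bound $n$; the parallel-slide linearity is the idea you are missing.
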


\begin{proof}
Consider the following toy model on the left in Figure \ref{fig:toymodelforclubfunction}: $y_c>y_b>y_a$ and $AC//A'C'$. We allow $A'$ to move alone the line segment $AB$ ($C'$ moves along $BC$ accordingly so that $AC//A'C'$).
Note that the function $\clubsuit(\overrightarrow{AA'})+\clubsuit(\overrightarrow{A'C'})+\clubsuit(\overrightarrow{C'C})$ changes linearly with respect to the length of $AA'$, it can achieve maximum when either $A'=C'=B$ or both $A'=A$ and $C'=C$.

Therefore, to achieve the maximum of (\ref{eq:sum}) we may remove extra $P_i$'s when $n>2$. 
\begin{figure} [hbt!]
	\begin{centering}
		
		\begin{tikzpicture}[x=25pt,y=25pt]
        \tikzset{%
    add/.style args={#1 and #2}{
        to path={%
 ($(\tikztostart)!-#1!(\tikztotarget)$)--($(\tikztotarget)!-#2!(\tikztostart)$)%
  \tikztonodes},add/.default={.2 and .2}}
}
		
		\coordinate (A) at(0,0);
		\coordinate (B) at(2,2);
		\coordinate (C) at(-3,3);

        \draw [add= 0 and -0.4] (A) to (B) coordinate (A1);
        \draw [add= 0 and -0.4] (C) to (B) coordinate (C1);
        \draw (A1) -- (C1);
        
        \node at (A) {$\bullet$};
        \node at (A1) {$\bullet$};
        \node at (C1) {$\bullet$};
        \node at (B) {$\bullet$};
        \node at (C) {$\bullet$};
        
        \node[below] at (A) {$A$};
        \node[right] at (B) {$B$};
        \node[left] at (C) {$C$};
        \node[below right] at (A1) {$A'$};
        \node[above] at (C1) {$C'$};
        \draw[dashed] (A1)-- (B);
        \draw[dashed] (C1)-- (B);
        \draw[dashed] (A)-- (C);
        
		\coordinate (A) at(9,0);
        \coordinate (B) at(8.5,2);
		\coordinate (C) at(4,3);

        \draw [add= -0.2 and -0.8] (C) to (B) coordinate (D1);
        \draw [add= -0.4 and -0.6] (C) to (B) coordinate (D);
        \draw [add= 0 and -0.4] (C) to (B) coordinate (D0);
        \draw [add= -0.8 and -0.2] (C) to (B) coordinate (D2);

        \node at (A) {$\bullet$};
        \node at (B) {$\bullet$};
        \node at (C) {$\bullet$};
        \node at (D1) {$\bullet$};
        \node at (D) {$\bullet$};
        \node at (D0) {$\bullet$};
        \node at (D2) {$\bullet$};
        
        \node[below] at (A) {$O$};
        \node[right] at (B) {$B$};
        \node[left] at (C) {$C$};
        \node[above] at (D1) {$D_1$};
        \node[above] at (D) {$D$};
        \node[above] at (D2) {$D_2$};
        \node[above] at (D0) {$D'$};
        
        \draw[dashed] (A)-- (D1);
        \draw[dashed] (A)-- (D2);
        \draw[dashed] (D0)-- (B);
        \draw (A)--(D0);
        \end{tikzpicture}
		
		\caption{Replacing extra edges and moving the vertex.}
		
		\label{fig:toymodelforclubfunction}
		
	\end{centering}
	
\end{figure}

Consider the toy model on the right in Figure \ref{fig:toymodelforclubfunction}: $y_c>y_b$ and $D$ is on the line segment of $BC$ such that $\frac{x_d}{y_d}\notin \frac{1}2\Z_{<0}$. Then by the definition of $\clubsuit$, there exists $D_1$ and $D_2$ on the line segment of $BD$ and $CD$ respectively such that $\frac{x_{d_i}}{y_{d_i}}\in \frac{1}2\Z_{<0}$ or $D_1=C$ ($D_2=B$), and for any point $D'$ on the line segment $D_1D_2$ ($D'\neq D_1$ or $D_2$), the function $\clubsuit(\overrightarrow{OD'})$ is computed with the same coefficients as that of $\clubsuit(\overrightarrow{OD})$.  Note that 
the function $\clubsuit(\overrightarrow{OD'})+\clubsuit(\overrightarrow{D'C})$ changes linearly with respect to the length of $D_1D'$ when $D\neq D_1$ or $D_2$, and is upper semi-continuous at $D_1$ and $D_2$, it can achieve the maximum when either $D'=D_1$ or $D_2$.

Back to the case of the lemma when $n=2$, we may always adjust the position of $P_1$ so that it satisfies the requirements in the statement.
\end{proof}

\begin{proof}[Proof of Proposition \ref{prop:clfboundsforC225}]
It is enough to prove the case for slope stable vector bundle $F$ over $C_{2,2,5}$. We consider the Harder-Narasimhan filtration for $\iota_*F$ with respect to $\nu_{BN}$ as that in Lemma \ref{lem:HNforbnstab}:
$$0=F_0\subset F_1\subset \dots \subset F_m=\iota_*F.$$

\begin{figure} [hbt!]
	\begin{centering}
		
		\begin{tikzpicture}[line cap=round,line join=round,x=1.0cm,y=1.0cm]

		\draw[->,color=black] (0,0) -- (0,3.6) node [above] {$H\ch_1$};
		\draw[->,color=black] (-3,0) -- (3,0);
		\draw[color=black] (0,0) -- (-3,3.3);
		\draw[color=black] (0,0) -- (1.8,1.5);
		\draw[color=black] (1.8,1.5) -- (-3,3.3);
		
		\draw[color=black] (0,0) -- (.5,1);
		\draw[dashed] (-.5,2) -- (-3,3.3);
		\draw[color=black] (-.5,2) -- (.5,1);
		
		\draw (0,0) node [below] {$O$};
		\draw (3,0) node [right] {$\ch_2$};
		\draw (1.8,1.5) node [right] {$Q$};
		\draw (-3,3.3) node [above] {$P$};
		\draw (.5,1) node [right] {$P_1$};
		\draw (-.5,2) node [below] {$P_2$};
		
		\begin{scriptsize}
		
		\fill [color=black] (0,0) circle (1.1pt);
		\fill [color=black] (-3,3.3) circle (1.1pt);
		\fill [color=black] (1.8,1.5) circle (1.1pt);
		\fill [color=black] (.5,1) circle (1.1pt);
		\fill [color=black] (-.5,2) circle (1.1pt);

		\end{scriptsize}
		
		\end{tikzpicture}
		
		\caption{The HN polygon for $\iota_*E$ is inside the triangle $OQP$.}
		
		\label{fig:HNpoly}
		
	\end{centering}
	
\end{figure}
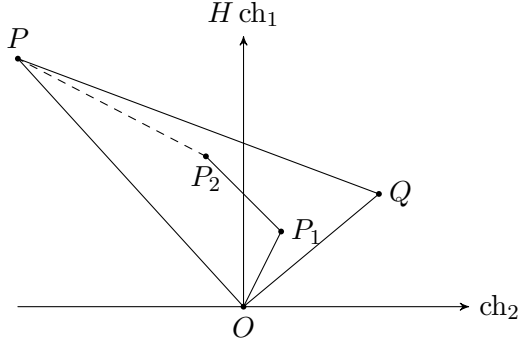
We draw the points $P_i\coloneqq (\ch_2(F_i),H\ch_1(F_i))$, $1\leq i\leq m$ on the upper half plane $\H$. By Lemma \ref{lem:p2length} and the definition of the function $\clubsuit$, 
\begin{align}
h^0(F)=& \Hom(\cO_{S_{2,2}},\iota_*F)\leq \sum^{i=m}_{i=1}\Hom(\cO_{S_{2,2}},F_i/F_{i-1})\nonumber \\
\leq & \sum^{i=m}_{i=1}\rk(F_i/F_{i-1}) + \clubsuit(\ch_2(F_i/F_{i-1}),H\ch_1(F_i/F_{i-1}))\nonumber\\=&\sum^{i=m}_{i=1}\clubsuit(\overrightarrow{P_{i-1}P_i}).\label{eq:clubsum}
\end{align}
Let $P=P_n=((\mu-50)r,20r)$ and $Q=(x_q,y_q)$ be points on $\H$ such that $\frac{x_q}{y_q}$ is the upper bound for $\nu_{BN}^+(\iota_*F)$ and $\frac{x_p-x_q}{x_q-y_q}$ is the lower bound for $\nu_{BN}^-(\iota_*F)$ as that in Proposition \ref{prop:boundsforBNofiotaF}. The points $O,P_1,\dots,P,O$ then form the vertices of a convex polygon in the triangle  $OQP$ as that in Figure \ref{fig:HNpoly}. Now by Lemma \ref{lem:polygonmax}, we may estimate the upper bound for $h^0(F)$ by choosing suitable candidate point $P_1\coloneqq (x_1,y_1)$ in the triangle $OQP$.

We first treat with the case when $\mu\in (0,10]$, by Proposition \ref{prop:boundsforBNofiotaF}, the slope $\nu_{BN}^-(\iota_*F)\in (-\frac{5}2,-2)$.

\begin{itemize}
\item When $\mu\in(0,2)$, by Proposition \ref{prop:boundsforBNofiotaF}, the slope $\nu_{BN}^+(\iota_*F)\in (-\frac{5}2,-2).$ By (\ref{eq:clubsum}), 
$$h^0(F)/r\leq \frac{9}{82}\times 20+\frac{\mu-50}{41}=\frac{40}{41}+\frac{\mu}{41}.$$
\end{itemize}

 When $\mu\in[2,10]$, the point $Q$ is always with locus:
$$Q=((\frac{\mu}{10}-1)r,\frac{\mu}5r).$$
\begin{itemize}
\item When $\mu\in[2,\frac{5}2)$, by Proposition \ref{prop:boundsforBNofiotaF}, the slope $\nu_{BN}^+(\iota_*F)\in (-\frac{5}2,-\frac{3}2).$ By Lemma \ref{lem:polygonmax}, $P_1$ has two candidate positions: $P_1=Q$ or $P_1$ is on the line segment $PQ$ and $\frac{x_1}{y_1}=-2$.
\begin{itemize}
\item When $P_1=Q$, the equation (\ref{eq:clubsum}) is equal to 
$$\frac{7}{50}\frac{\mu}{5}r+\frac{1}{25}(\frac{\mu}{10}-1)r+\frac{9}{82}(20-\frac{\mu}{5})r+\frac{1}{41}(\frac{9\mu}{10}-49)r=(\frac{24}{25}+\frac{4\mu}{125})r.$$
\item In the second case, $P_1$ is at 
$$(x_1,y_1)=(\frac{-240\mu+4\mu^2+400}{90-5\mu}r,\frac{120\mu-2\mu^2-200}{90-5\mu}r).$$
Note that $y_1$ as a function of $\mu$ is convex down when $\mu\leq 10$. Substituting $y_1(2)=\frac{2}5r$ and $y_1(\frac{5}2)=\frac{35}{31}r$, we have $y_1\leq (\frac{226}{155}\mu-\frac{78}{31})r$.  The equation (\ref{eq:clubsum}) is equal to 
\begin{align}
& \frac{1}{16}y_1+\frac{9}{82}(20r-y_1)+\frac{1}{41}((\mu-50)r+2y_1) 
= (\frac{40}{41}+\frac{\mu}{41})r+\frac{1}{656}y_1 \label{eq:slope=-2est}\\ 
\leq & (\frac{40}{41}+\frac{\mu}{41}+\frac{1}{656}(\frac{226}{155}\mu-\frac{78}{31}))r 
=(\frac{33\mu}{1240}+\frac{241}{248})r.\nonumber
\end{align}
\end{itemize}
\item When $\mu\in[\frac{5}2,\frac{10}3)$, by Proposition \ref{prop:boundsforBNofiotaF}, the slope $\nu_{BN}^+(\iota_*F)\in (-\frac{5}2,-1).$ By Lemma \ref{lem:polygonmax}, $P_1$ has three candidate positions: $P_1=Q$ or $P_1$ is on the line segment $PQ$ with $\frac{x_1}{y_1}=-\frac{3}2$ or  $-2$.
\begin{itemize}
\item When $P_1=Q$, the equation (\ref{eq:clubsum}) is equal to 
$$\frac{5}{26}\frac{\mu}{5}r+\frac{1}{13}(\frac{\mu}{10}-1)r+\frac{9}{82}(20-\frac{\mu}{5})r+\frac{1}{41}(\frac{9\mu}{10}-49)r=(\frac{12}{13}+\frac{3\mu}{65})r.$$
\item In the second case, $P_1$ is at 
$$(x_1,y_1)=(\frac{-180\mu+3\mu^2+300}{190-6\mu}r,\frac{60\mu-\mu^2-100}{95-3\mu}r).$$
Note that $y_1$ as a function of $\mu$ is convex down when $\mu\leq 10$. Substituting $y_1(\frac{5}2)=\frac{1}2r$ and $y_1(\frac{10}3)=\frac{160}{153}r$, we have $y_1\leq (\frac{167}{255}\mu-\frac{58}{51})r$.  The equation (\ref{eq:clubsum}) is equal to 
\begin{align}
& \frac{1}{12}y_1+\frac{9}{82}(20r-y_1)+\frac{1}{41}((\mu-50)r+\frac{3}2y_1) = (\frac{40}{41}+\frac{\mu}{41})r+\frac{5}{492}y_1 \label{eq:slope=-32est} \\ 
\leq & (\frac{40}{41}+\frac{\mu}{41}+\frac{5}{492}(\frac{167}{255}\mu-\frac{58}{51}))r  =(\frac{295}{306}+ \frac{19\mu}{612})r.\nonumber
\end{align}
\item In the third case, the coordinate of $P_1$ is given in the second case of $\mu\in[2,\frac{5}2)$. The term $\frac{1}{656}y_1$ in (\ref{eq:slope=-2est}) is $\frac{35}{20336}r$ and $\frac{5}{1353}r$ when $\mu=\frac{5}2$ and $\frac{10}3$ respectively.  The term $\frac{5}{492}y_1$ in (\ref{eq:slope=-32est}) is $\frac{5}{984}r$ and $\frac{200}{18819}r$ when $\mu=\frac{5}2$ and $\frac{10}3$ respectively. Therefore, (\ref{eq:slope=-2est}) is always less than the estimation in the second case.
\end{itemize}
\item When $\mu\in[\frac{10}3,5)$, by Proposition \ref{prop:boundsforBNofiotaF}, $\nu_{BN}^+(\iota_*F)\in (-\frac{5}2,-\frac{1}2).$ By Lemma \ref{lem:polygonmax}, $P_1$ has four candidate positions: $P_1=Q$ or $P_1$ is on the line segment $PQ$ with $\frac{x_1}{y_1}=-1$ or $-\frac{3}2$ or  $-2$.
\begin{itemize}
\item When $P_1=Q$, the equation (\ref{eq:clubsum}) is equal to 
$$\frac{3}{10}\frac{\mu}{5}r+\frac{1}{5}(\frac{\mu}{10}-1)r+\frac{9}{82}(20-\frac{\mu}{5})r+\frac{1}{41}(\frac{9\mu}{10}-49)r=(\frac{4}{5}+\frac{2\mu}{25})r.$$
\item In the second case, $P_1$ is at 
$$(x_1,y_1)=(\frac{-120\mu+2\mu^2+200}{290-7\mu}r,\frac{120\mu-2\mu^2-200}{290-7\mu}r).$$
Note that $y_1$ as a function of $\mu$ is convex down when $\mu\leq 10$. Substituting $y_1(\frac{10}3)=\frac{2}3r$ and $y_1(5)=\frac{70}{51}r$, we have $y_1\leq (\frac{36}{85}\mu-\frac{38}{51})r$.  The equation (\ref{eq:clubsum}) is equal to 
\begin{align}
& \frac{1}{8}y_1+\frac{9}{82}(20r-y_1)+\frac{1}{41}((\mu-50)r+y_1) = (\frac{40}{41}+\frac{\mu}{41})r+\frac{13}{328}y_1 \nonumber \\ 
\leq & (\frac{40}{41}+\frac{\mu}{41}+\frac{13}{328}(\frac{36}{85}\mu-\frac{38}{51}))r  =(\frac{193}{204}+ \frac{7\mu}{170})r.\nonumber
\end{align}
\item The remaining cases can be eliminated by a similar calculation as that in the third case of when $\mu\in[\frac{5}2,\frac{10}3)$.
\end{itemize}

\item When $\mu\in[5,10]$, by Lemma \ref{lem:polygonmax}, $P_1$ has five candidate positions: $P_1=Q$ or $P_1$ is on the line segment $PQ$ with $\frac{x_1}{y_1}=-\frac{1}2$ or $-1$ or $-\frac{3}2$ or  $-2$.
\begin{itemize}
\item When $P_1=Q$, the equation (\ref{eq:clubsum}) is equal to 
$$\frac{\mu}{10}r+(\frac{\mu}{10}-1)r+\frac{9}{82}(20-\frac{\mu}{5})r+\frac{1}{41}(\frac{9\mu}{10}-49)r=\frac{\mu}{5}r.$$
\item In the second case, $P_1$ is at 
$$(x_1,y_1)=(\frac{-60\mu+\mu^2+100}{390-8\mu}r,\frac{60\mu-\mu^2-100}{195-4\mu}r).$$
Note that $y_1$ as a function of $\mu$ is convex down when $\mu\leq 10$. Substituting $y_1(5)=r$ and $y_1(10)=\frac{80}{31}r$, we have $y_1\leq (\frac{49}{155}\mu-\frac{18}{31})r$.  The equation (\ref{eq:clubsum}) is equal to 
\begin{align}
& \frac{1}{4}y_1+\frac{9}{82}(20r-y_1)+\frac{1}{41}((\mu-50)r+\frac{y_1}2) = (\frac{40}{41}+\frac{\mu}{41})r+\frac{25}{164}y_1 \nonumber \\ 
\leq & (\frac{40}{41}+\frac{\mu}{41}+\frac{25}{164}(\frac{49}{155}\mu-\frac{18}{31}))r  =(\frac{55}{62}+ \frac{9\mu}{124})r.\nonumber
\end{align}
\item The remaining cases can be eliminated by a similar calculation as that in the third case of when $\mu\in[\frac{5}2,\frac{10}3)$.
\end{itemize}
\end{itemize}

We then treat with the case when $\mu\in [30,40]$.
\begin{itemize}
\item When $\mu\geq 10+20\sqrt 2$, as $\nu_{BN}^-(\iota_*F)=-\frac{3}2$, by Lemma \ref{lem:polygonmax}, we may assume $P_1=Q$ to compute (\ref{eq:clubsum}). The coordinate of $Q$ is 
\begin{align*}
(x_q,y_q)=\begin{cases}
 ((\mu-32)r,8r),& \text{ when } \mu\in[39,40];\\
 (\frac{23\mu-610}{41}r,\frac{12\mu-140}{41}r), & \text{ when } \mu\in[10+20\sqrt 2,39].
\end{cases}
\end{align*}
 The equation (\ref{eq:clubsum}) is equal to 
\begin{align}
& \frac{1}2y_q+x_q+\frac{1}{12}(20r-y_q) =\begin{cases}
 (\mu-27 )r,& \text{ when } \mu\in[39,40];\\
 \frac{28\mu-600}{41}r, & \text{ when } \mu\in[10+20\sqrt 2,39].
\end{cases}\nonumber
\end{align}
\item When $\mu\leq 10+20\sqrt 2$, the point $Q$ is always with locus:
$$((\frac{3\mu}{10}-5)r,\frac{\mu}5r).$$
We may consider when $P_1=Q$ or $P_1$ is on the line segment $OQ$ such that $\frac{x_p-x_1}{y_p-y_1}=-\frac{3}2$.
\begin{itemize}
\item The second case is the same computation as that in the second case of when $\mu\geq 10+20\sqrt 2$.
\item When $P_1=Q$, the equation (\ref{eq:clubsum}) is equal to 
\begin{align}
& \frac{1}2\frac{\mu}5r+(\frac{3\mu}{10}-5)r+\frac{7}{50}(20-\frac{\mu}5)r +\frac{1}{25}(\frac{7\mu}{10}-45)r =(\frac{2\mu}5-4)r.\nonumber
\end{align}
\end{itemize}
\end{itemize}

When $\mu= 37$, $\frac{2\mu}5-4=10.8=\frac{11\mu}{15}-\frac{49}3>10\frac{26}{41}=\frac{28\mu-600}{41}>\mu-27$.

When $\mu=40$, $\frac{2\mu}5-4=12<\frac{28\mu-600}{41}=12\frac{28}{41}<13=\frac{11\mu}{15}-\frac{49}3=\mu-27$. 

Note the slope of $\mu$ in the bound in each case, the bound in Proposition \ref{prop:clfboundsforC225} holds. 
\end{proof}

\section{Bogomolov-Gieseker type inequality for surfaces $S_{2,5}$ and quintic threefolds}
\label{sec:5}

The goal of this section is to prove the stronger Bogomolov-Gieseker type inequality for the second Chern character of slope stable sheaves on a quintic threefold. Our strategy is to first reduce this to the same inequality for a surface on the quintic threefold.

The following Feyzbakhsh's restriction lemma \cite{Fe16} will be one of the key tools to reduce Bogomolov-Gieseker type inequality for higher dimensional varieties to surfaces.

\begin{Lem}
Let $(X,H)$ be a polarized smooth projective variety with dimension $n=2$ or $3$. Let $E$ be a coherent sheaf in $\Coh^{0,H}(X)$. Suppose there exists $\alpha>0$ and $m\in \Z_{>0}$ such that 
\begin{itemize}
\item $E(-mH)[1]$ is in $\Coh^{0,H}(X)$;
\item both $E$ and $E(-mH)[1]$ are $\nu_{\alpha,0,H}$-tilt stable;
\item $\nu_{\alpha,0,H}(E)=\nu_{\alpha,0,H}(E(-mH)[1])$.
\end{itemize}
Then for a generic smooth irreducible subvariety $Y\in|mH|$, the restricted sheaf $E|_Y$ is $\mu_{H_Y}$-slope semistable on $Y$. Moreover, $\rk(E)=\rk(E|_Y), H_Y^{n-2}\ch_1(E|_Y)=mH^{n-1}\ch_1(E) $ and when $n=3$, $\ch_2(E|_Y)=mH\ch_2(E)$.
\label{lem:soheylares}
\end{Lem}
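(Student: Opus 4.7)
The plan is to exploit the distinguished triangle coming from the restriction sequence
\[ 0 \to E(-mH) \to E \to \iota_*(E|_Y) \to 0 \]
in $\Coh(X)$. First I would observe that $E$ is torsion-free: since $E(-mH)$ is a sheaf and $E(-mH)[1]\in \Coh^{0,H}(X)$, we must have $E(-mH)\in \mathcal F_{0,H}$, and by the definition of $\mathcal F_{0,H}$ any such sheaf is torsion-free. Hence for a generic $Y\in |mH|$ the restriction $E|_Y$ is a coherent sheaf on $Y$ (the locus where $\mathrm{Tor}^1(E,\mathcal O_Y)\neq 0$ is closed of positive codimension in $|mH|$). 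The Chern character identities are then routine: combine the restriction sequence with Grothendieck--Riemann--Roch for $\iota\colon Y\hookrightarrow X$, note that $\ch(\iota_*(E|_Y))=\ch(E)(1-e^{-mH})$, and intersect with the appropriate powers of $H$ to read off $\rk(E|_Y)=\rk(E)$, $H_Y^{n-2}\ch_1(E|_Y)=mH^{n-1}\ch_1(E)$, and (for $n=3$) $\ch_2(E|_Y)=mH\ch_2(E)$.

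The real content is the semistability statement, and the decisive move is to rotate the triangle to
\[ E \to \iota_*(E|_Y) \to E(-mH)[1] \xrightarrow{+1}. \]
All three vertices sit in $\Coh^{0,H}(X)$: the outer ones by hypothesis, and $\iota_*(E|_Y)$ automatically since any torsion sheaf belongs to $\mathcal T_{0,H}\subset \Coh^{0,H}(X)$. The long exact cohomology sequence with respect to the tilted $t$-structure collapses to a short exact sequence
\[ 0 \to E \to \iota_*(E|_Y) \to E(-mH)[1] \to 0 \]
in $\Coh^{0,H}(X)$. Since $E$ and $E(-mH)[1]$ are both $\nu_{\alpha,0,H}$-tilt stable of the same slope, their extension $\iota_*(E|_Y)$ is $\nu_{\alpha,0,H}$-tilt semistable of that same slope.

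Now I would argue by contradiction. Suppose a subsheaf $F\subsetneq E|_Y$ on $Y$ destabilises $E|_Y$, i.e.\ $\mu_{H_Y}(F)>\mu_{H_Y}(E|_Y)$. Then $\iota_*F\hookrightarrow \iota_*(E|_Y)$ in $\Coh(X)$ is an injection between torsion sheaves, hence also an inclusion of subobjects in $\Coh^{0,H}(X)$ (both sit in $\mathcal T_{0,H}$, so the $\Coh(X)$-quotient is again torsion and the sequence remains exact in the tilted heart). A direct Grothendieck--Riemann--Roch computation, using $\ch_0(\iota_*F)=0$, $H^{n-1}\ch_1(\iota_*F)=m\,r_F\,H^n$, and the analogous formula for $H^{n-2}\ch_2(\iota_*F)$, yields the clean identity
\[ \nu_{\alpha,0,H}(\iota_*F)=\mu_{H_Y}(F)-\tfrac{m}{2}, \]
independent of $\alpha$. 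Hence the destabilising inequality $\mu_{H_Y}(F)>\mu_{H_Y}(E|_Y)$ translates into $\nu_{\alpha,0,H}(\iota_*F)>\nu_{\alpha,0,H}(\iota_*(E|_Y))$, contradicting the $\nu_{\alpha,0,H}$-tilt semistability of $\iota_*(E|_Y)$ just established.

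The hardest step, in my view, is verifying that the rotated triangle really does yield a short exact sequence in the tilted heart; once that is secured, the remaining slope computation is forced by Grothendieck--Riemann--Roch. Conceptually, the wall condition $\nu_{\alpha,0,H}(E)=\nu_{\alpha,0,H}(E(-mH)[1])$ is precisely the geometric signature of the critical wall along which torsion sheaves supported on members of $|mH|$ acquire the same tilt slope as $E$, which is what makes the whole mechanism go through.
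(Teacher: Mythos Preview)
Your proof is correct and follows essentially the same route as the paper: both arguments rotate the restriction triangle to exhibit $\iota_*(E|_Y)$ as an extension of the two $\nu_{\alpha,0,H}$-tilt stable objects $E$ and $E(-mH)[1]$ of equal slope, conclude that $\iota_*(E|_Y)$ is tilt semistable, and then derive a contradiction from a hypothetical destabilising subsheaf $F\subset E|_Y$ via the identity $\nu_{\alpha,0,H}(\iota_*F)=\mu_{H_Y}(F)-\tfrac{m}{2}$. The only notable difference is that the paper proves the stronger fact that $E$ is reflexive (using $\Hom(T,E(-mH)[1])=0$ for $T$ supported in codimension~$\ge 2$), so that $E|_Y$ is locally free for generic $Y$; your torsion-freeness already guarantees that $E(-mH)\to E$ is injective for \emph{every} $Y\in|mH|$, which is all that is needed, so the genericity remark about $\mathrm{Tor}^1$ is in fact unnecessary.
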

\begin{proof}
Note that $E(-mH)[1]$ is $\nu_{\alpha,0,H}$-tilt stable, for any torsion sheaf $T$ supported  on a variety with codimension not less than $2$, we have $\Hom(T,E(-mH)[1])=0$. In particular, $E$ is a reflexive sheaf, the singular locus of $E$ is of codimension at least $3$. For any smooth irreducible $Y\in|mH|$ avoiding the singular locus, the restricted sheaf $E|_Y$ is locally free on $Y$. In addition, $\rk(E)=\rk(E|_Y), H_Y^{n-2}\ch_1(E|_Y)=mH^{n-1}\ch_1(E) $. 

Suppose $E|_Y$ is not semistable, then there is a destabilizing subobject $F\hookrightarrow E|_Y$ in $\Coh(Y)$ such that $F$ is locally free and $\mu_{H_Y}(E|_Y)<\mu_{H_Y}(F)$. Denote the embedding by $\iota:Y\hookrightarrow X$. Then 
\begin{align*}
\nu_{\alpha,0,H}(\iota_*(E|_Y)) =&\frac{H^{n-2}\ch_2(\iota_*(E|_Y))}{H^{n-1}\ch_1(\iota_*(E|_Y))}=\frac{H_Y^{n-2}\ch_1(E|_Y)-\frac{1}2mH^{n-1}_{Y}\rk(E|_Y)}{H^{n-1}_Y\rk(E|_Y)}\\
 =& \mu_{H_Y}(E|_Y)-\frac{m}2<\mu_{H_Y}(F)-\frac{m}2 =\nu_{\alpha,0,H}(\iota_*F).
\end{align*}  
Therefore $\iota_*(E|_Y)$ is not $\nu_{\alpha,0}$-tilt semistable. However, the object $\iota_*(E|_Y)$ is the extension of $E$ and $E(-mH)[1]$ in $\Coh^{0,H}(X)$. Since both $E$ and $E(-mH)[1]$ are $\nu_{\alpha,0,H}$-tilt stable with the same slope in $\Coh^{0,H}(X)$, any of their extension is  $\nu_{\alpha,0,H}$-tilt semistable. We get the contradiction, and $E|_Y$ must be $\mu_{H_Y}$-slope semistable.
\end{proof}
Let $S_{2,5}\subset \mathbf{P}^4$ be a smooth irreducible projective surface which is the complete intersection of a quadratic hypersurface and a quintic hypersurface. Denote $H=[\mathcal O_{S_{2,5}}(1)]$. By the Clifford type inequality for $C_{2,2,5}\in\abs{2H}$ in Proposition \ref{prop:clfboundsforC225}, we have the following stronger Bogomolov-Gieseker type inequality for stable objects in $D^b(S_{2,5})$.

\begin{Prop}
Let $F$ be an object in $D^b(S_{2,5})$ such that $\frac{H\ch_1(F)}{H^2\rk(F)}\in(0,1)$. Suppose $F$ is $\nu_{\alpha,0,H}$-tilt stable or $\nu_{\alpha',1,H}$-tilt stable for some $\alpha>0$ or $\alpha'>\frac{1}2$, then 
\begin{align}\label{eq:41}
\frac{\ch_2(F)}{H^2\rk(F)}\leq \begin{cases}
\frac{3}2\left(\frac{H\ch_1(F)}{H^2\rk(F)}\right)^2-\frac{H\ch_1(F)}{H^2\rk(F)}, & \text{when } \frac{H\ch_1(F)}{H^2\rk(F)} \in (0, \frac{1}{10}]\cup [\frac{9}{10},1);\\
-\frac{4}{15}\frac{H\ch_1(F)}{H^2\rk(F)}-\frac{7}{120}, & \text{when } 
\frac{H^2\ch_1(F)}{H^3\rk(F)} \in [\frac{1}{10}, \frac{1}4];\\
\frac{1}2\frac{H\ch_1(F)}{H^2\rk(F)} - \frac{1}4 , & \text{when } \frac{H\ch_1(F)}{H^2\rk(F)} \in [\frac{1}4, \frac{3}4];\\
\frac{19}{15}\frac{H\ch_1(F)}{H^2\rk(F)}-\frac{33}{40}, & \text{when } \frac{H\ch_1(F)}{H^2\rk(F)} \in [\frac{3}4, \frac{9}{10}].
\end{cases}
\end{align}
\label{prop:bgfors25}
\end{Prop}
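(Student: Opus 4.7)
The plan is to bound $\chi(F)$ from above via Feyzbakhsh's restriction lemma (Lemma \ref{lem:soheylares}) combined with the Clifford bound (Proposition \ref{prop:clfboundsforC225}) applied on a generic curve $C = C_{2,2,5} \in |2H|$. First I would reduce to the case where $F$ is a torsion-free $\mu_H$-slope stable sheaf that is $\nu_{\alpha,0,H}$-tilt stable: for a tilt-stable but not slope-stable $F$, the Harder--Narasimhan filtration with respect to $\mu_H$ together with Lemma \ref{lem:discriminnatdec} reduces to factors with strictly smaller $\db_H$, to which induction applies; the $\nu_{\alpha',1,H}$-tilt stable case is absorbed into the $\nu_{\alpha,0,H}$ case via the derived-dual-twist $F \mapsto F^\vee(H)[2]$, which on $S_{2,5}$ (where $K_S = 2H$) intertwines tilt-stability at $\beta = 1$ with tilt-stability at $\beta = 0$.

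Set $s = H\ch_1(F)/(H^2\rk(F))$ and $t = \ch_2(F)/(H^2\rk(F))$. Noether's formula on $S_{2,5}$ with $K_S = 2H$ and $c_2(T_{S_{2,5}}) = 14 H^2$ yields $\chi(\mathcal{O}_{S_{2,5}}) = (40+140)/12 = 15$, so Hirzebruch--Riemann--Roch gives $\chi(F) = r(10t - 10s + 15)$. The plan is to bound $\chi(F) \leq h^0(F) + h^2(F)$ by treating each summand via restriction to $C$. For $h^0(F)$: apply Lemma \ref{lem:soheylares} with $m=2$ to the pair $(F, F(-2H)[1])$ at the critical parameter $\alpha_1^{\ast} = t + s(1-s)$, the unique value at which $\nu_{\alpha_1^{\ast},0,H}(F) = \nu_{\alpha_1^{\ast},0,H}(F(-2H)[1])$; this yields $F|_C$ $\mu$-semistable of slope $20s$, and Proposition \ref{prop:clfboundsforC225} gives $h^0(F|_C) \leq r\phi(20s)$ where $\phi$ is the piecewise Clifford bound. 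The restriction sequence $0 \to F(-2H) \to F \to F|_C \to 0$ together with $\Hom(\mathcal{O}_S, F(-2H)) = 0$ (from slope-semistability of $F(-2H)$ of slope $s-2 < 0$) then gives $h^0(F) \leq h^0(F|_C) \leq r\phi(20s)$.

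For $h^2(F) = h^0(F^\vee(2H))$ (by Serre duality, using $K_S = 2H$), apply the same argument to $F^\vee(2H)$, which is $\mu_H$-slope stable of slope $2-s$. Feyzbakhsh restriction to $C$ followed by Proposition \ref{prop:clfboundsforC225} yields $h^2(F) \leq r\phi(40 - 20s)$; the restricted slope $40-20s$ lies in $[30, 40)$ precisely when $s \in (0, 1/2]$, the range in which the second interval of Proposition \ref{prop:clfboundsforC225} applies. Combining,
\[
10r(t - s) + 15 r \leq r \bigl(\phi(20s) + \phi(40 - 20s)\bigr),
\]
which, after case-by-case substitution of the piecewise formula for $\phi$, recovers the inequality (\ref{eq:41}) across each of the six subranges of $s \in (0, 1/2]$. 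The range $s \in [1/2, 1)$ is handled by the duality $F \leftrightarrow F^\vee(H)$, which sends $s$ to $1-s$ and, via $\ch_2(F^\vee(H)) = \ch_2(F) - H\ch_1(F) + 5\rk(F)$, preserves the target inequality by direct computation.

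The main obstacle is verifying the hypotheses of Lemma \ref{lem:soheylares} at both $\alpha_1^{\ast}$ (for $F$) and its analogue $\alpha_2^{\ast}$ (for $F^\vee(2H)$) simultaneously: both objects must be $\nu_{\alpha,0,H}$-tilt stable at these critical values. When tilt-stability fails on the relevant wall, one decomposes along Jordan--H\"older factors via Lemma \ref{lem:discriminnatdec}, each of which has strictly smaller $\db_H$ and satisfies the desired inequality by induction, following the pattern used in the proof of Theorem \ref{thm:bnimpliesall} and \cite[Lemma 5.6]{BMS:stabCY3s}. Additional care is required near the boundary values $s \in \{0, 1/10, 1/4, 3/4, 9/10, 1\}$ where the piecewise formulas for both $\phi$ and the target bound transition between pieces.
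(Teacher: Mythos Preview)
Your approach is essentially the same as the paper's: reduce to a minimal-discriminant counterexample, apply Feyzbakhsh's restriction lemma to obtain semistability of $F|_C$ on a generic $C\in|2H|$, bound $h^0(F)+h^2(F)$ via the Clifford inequality of Proposition~\ref{prop:clfboundsforC225}, and compare with Hirzebruch--Riemann--Roch. Two points of friction, however, deserve comment.

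First, your reduction of the $\nu_{\alpha',1,H}$-case to the $\nu_{\alpha,0,H}$-case via $F\mapsto F^\vee(H)$ is misleading. The paper does \emph{not} discard the $\beta=1$ hypothesis; it needs \emph{both}. To apply Lemma~\ref{lem:soheylares} at $(\alpha_0,0)$ one must check that $F(-2H)[1]$ is $\nu_{\alpha_0,0,H}$-tilt stable, and this is deduced from $\nu_{\alpha'_0,-1,H}$-tilt stability of $F(-2H)[1]$ (equivalently, $\nu_{\cdot,1,H}$-stability of $F$) together with Bertram's nested wall theorem. So the correct reduction is: a minimal counterexample, by the convexity of the bound curve and Lemma~\ref{lem:discriminnatdec}, is tilt-stable for all $\alpha>0$ at $\beta=0$ \emph{and} for all $\alpha'>\tfrac12$ at $\beta=1$. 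Your induction scheme can prove this, but you should run it at both values of $\beta$, not absorb one into the other. Relatedly, the positivity $\alpha_1^\ast>0$ that the restriction lemma requires comes for free only from the contradiction hypothesis $t>\tfrac32 s^2-s$; in a purely direct argument you would need to treat $\alpha_1^\ast\le0$ as a separate (easy) case.

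Second, your second application of the restriction lemma, to $F^\vee(2H)$, is redundant. Once $F|_C$ is semistable, so is its dual $(F|_C)^\vee=F^\vee|_C$ and hence $F^\vee(2H)|_C$; then the short exact sequence $0\to F^\vee\to F^\vee(2H)\to F^\vee(2H)|_C\to0$ together with $\Hom(\mathcal O_{S_{2,5}},F^\vee)=0$ gives the bound on $h^2(F)$ directly. The paper proceeds this way and applies Lemma~\ref{lem:soheylares} only once. Your identification of the ``main obstacle'' as simultaneous stability of $F$ and $F^\vee(2H)$ therefore misplaces the real issue, which is the stability of $F(-2H)[1]$ at the critical $\alpha_0$.
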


\begin{proof}
Suppose there is some $\nu_{\alpha,0,H}$ or $\nu_{\alpha',1,H}$-tilt stable object $F$ with $\frac{H\ch_1(F)}{H^2\rk(F)}\in(0,1)$ violating the inequality (\ref{eq:41}), we may assume that $F$ is with the minimum discriminant $\db_H$ among such objects. Suppose $F$ becomes strictly $\nu_{\alpha,0,H}$-tilt (or $\nu_{\alpha',1,H}$-tilt) semistable for some $\alpha>0$ (or $\alpha'>\frac{1}2$), then as the shape of the curve (\ref{eq:41}) is convex (see Figure \ref{fig:strbgcurve}), there exists a Jordan-H\"older  factor $F_i$ with $\frac{H\ch_1(F_i)}{H^2\rk(F_i)}\in(0,1)$  which also violates the inequality (\ref{eq:41}). By Lemma \ref{lem:discriminnatdec}, this violates the minimum assumption on $\db_H(F)$. 

If $F$ becomes strictly $\nu_{\alpha,\beta_0,H}$-tilt semistable at the vertical wall for $\beta_0=\frac{H\ch_1(F)}{H^2\rk(F)}$ and some $\alpha>\frac{\beta
_0^2}2$, we may  assume that $F\in\Coh^{\beta_0,H}(S_{2,5})$,  then each torsion Jordan-H\"older factor of $F$ has $\ch_2\geq 0$. Since for any other Jordan-H\"older factor $F_j$ we have $0>\rk(F_j)\geq \rk(F)$, there exists a factor $F_i$ with $\frac{H\ch_1(F_i)}{H^2\rk(F_i)}=\frac{H\ch_1(F)}{H^2\rk(F)}$ and $\frac{\ch_2(F_i)}{H^2\rk(F_i)}\geq \frac{\ch_2(F)}{H^2\rk(F)}$. In particular, the object $F_i$ also violates the inequality (\ref{eq:41}) and $\db_H(F_i)\leq \db_H(F)$. By Lemma \ref{lem:nestedandopen}, the object $F_i$ is  $\nu_{\alpha,0,H}$-tilt stable and $\nu_{\alpha,1,H}$-tilt stable for $\alpha\gg 0$. By the minimum assumption on $\db_H(F)$, the equality holds, we may just choose $F$ to be  $F_i$. By the previous argument, the object  $F$ is $\nu_{\alpha,0,H}$-tilt stable for all $\alpha>0$ and $\nu_{\alpha,1,H}$-tilt stable for all $\alpha>\frac{1}2$.  
\begin{figure}[hbt!]
\begin{center}
\scalebox{0.7}{

\begin{tikzpicture}[scale=2]
\tikzset{%
    add/.style args={#1 and #2}{
        to path={%
 ($(\tikztostart)!-#1!(\tikztotarget)$)--($(\tikztotarget)!-#2!(\tikztostart)$)%
  \tikztonodes},add/.default={.2 and .2}}
}

\newcommand\XA{1.9}

\draw[->] (-0.5*\XA,0) -- (2.2*\XA,0) node[above right] {$\frac{\ch_2}{H^2\ch_0}$};

\draw[->] (0,-2)-- (0,0) node [below left] {O} --  (0,2) node[above left] {$\frac{H\ch_1}{H^2\ch_0}$};

\draw[rotate=-90] (0,0) parabola (1.8,1.62*\XA);
\draw[rotate=-90] (0,0) parabola (-1.8,1.62*\XA);
\draw[rotate=-90,dashed] (-1/3,-1/6*\XA) parabola (0,0);
\draw[rotate=-90,dashed] (-1/3,-1/6*\XA) parabola (-1,1/2*\XA);

\coordinate (O) at (0,0);
\coordinate(O1) at (0.5*\XA,1);
\coordinate(O-1) at (0.5*\XA,-1);
\coordinate (AB) at (-0.03*\XA,0.3);
\coordinate(E-2) at(1.37*\XA,-1.7);
\coordinate(E1) at(0.07*\XA,.4);
\node at (AB) {$\bullet$};
\node at (O1) {$\bullet$};
\node at (O-1) {$\bullet$};
\node at (E-2) {$\bullet$};
\node[below] at (E-2) {$p_H(F(-2H))$};
\node at (E1) {$\bullet$};
\node[above] at (E1) {$F_i$};

\draw[red](0,0) to[out=135,in=-65] (-17/200*\XA,1/10)--(-1/8*\XA,1/4)--(1/8*\XA,3/4)--(63/200*\XA,9/10)--(1/2*\XA,1);
\draw node[above] at (-0.5*\XA,0.6) {$p_H(F)$};
\draw[->] (-0.5*\XA,0.6)--(-0.1*\XA,0.35);
\draw[dashed] (O-1) -- (2*\XA,-1);
\draw[dashed] (O1) -- (2*\XA,1) node[right] {$\nu_{a,1,H}$};

\draw [add= .5 and 0.2,dashed] (AB) to (E-2);
\draw [add= 8 and 16,dashed] (AB) to (E1);
\end{tikzpicture}
}
\end{center}
\caption{The line through $p_H(F)$ and $p_H(F(-2H)$.} \label{fig:strbgcurve}
\end{figure}

We may assume that $F\in \Coh^{0,H}(S_{2,5})$ and $F[1]\in \Coh^{1,H}(S_{2,5})$. By the inequality (\ref{eq:41}), we always have $$\frac{\ch_2(F)}{H^2\rk(F)}> 
\frac{3}2\left(\frac{H\ch_1(F)}{H^2\rk(F)}\right)^2-\frac{H\ch_1(F)}{H^2\rk(F)}.$$
The line through $p_H(F)=:(a,b)$ and $p_H(F(-2H)[1])=(a-2b+2,b-2)$
has equation $$(b - 1) Y - X = -a + b^2 - b.$$ Note that $a>\frac{3}2b^2-b$, the  line will intersect $(\alpha_0,0)$ for some  $\alpha_0>0$ and $(\alpha'_0,-1)$ for some $\alpha_0'>\frac{1}2$. By previous discussions, the object $F$ is a coherent sheaf and $\nu_{\alpha_0,0,H}$-tilt stable in $\Coh^{0,H}(S_{2,5})$. The object $F(-2H)[1]$ is $\nu_{\alpha'_0,-1,H}$-tilt stable in $\Coh^{-1,H}(S_{2,5})$ and therefore also $\nu_{\alpha_0,0,H}$-tilt stable in $\Coh^{0,H}(S_{2,5})$ by Lemma \ref{lem:nestedandopen}. Since $p_H(F)$, $p_H(F(-2H)[1])$ and $(\alpha_0,0)$ are collinear, by Lemma \ref{lem:nestedandopen} ($b'$), we have $$\nu_{\alpha_0,0,H}(F)=\nu_{\alpha_0,0,H}(F(-2H)[1]).$$
By Lemma \ref{lem:soheylares}, let $C_{2,2,5}\in |2H|$ be a smooth irreducible curve, then $F|_{C_{2,2,5}}$ is semistable with 
$$\rk(F|_{C_{2,2,5}})=\rk(F) \text{ and } \deg(F|_{C_{2,2,5}})=2H\ch_1(F).$$

Without loss of generality, we may assume $\frac{H\ch_1(F)}{H^2\rk(F)}\leq \frac{1}2$, as otherwise we may use $F^\vee(H)$ instead. Note that $\Hom(\mathcal O_{S_{2,5}},F(-2H))=0$ and $\Hom(\mathcal O_{S_{2,5}},F^\vee)=0$, by Hirzebruch-Riemann-Roch, we have
\begin{align}
& \ch_2(F)-H\ch_1(F)+15\rk(F)=\chi(\mathcal O_{S_{2,5}},F) \label{eq:43}\\
\leq & \hom(\mathcal O_{S_{2,5}},F)+\hom(\mathcal O_{S_{2,5}},F[2])=\hom(\mathcal O_{S_{2,5}},F)+\hom(\mathcal O_{S_{2,5}},F^\vee(2H))\\
\leq & \hom(\mathcal O_{C_{2,2,5}},F|_{C_{2,2,5}})+\hom(\mathcal O_{C_{2,2,5}},F^\vee(2H)|_{C_{2,2,5}}). \label{eq:42}
\end{align}
We now apply Proposition \ref{prop:clfboundsforC225} and discuss three different cases on the slope of $F|_{C_{2,2,5}}$. Note that the slope $\mu(F|_{C_{2,2,5}})=20\frac{H\ch_1(F)}{H^2\rk(F)}\in (0,10]$.
\begin{itemize}
\item When $\mu\in(0,2)$,  by Proposition \ref{prop:clfboundsforC225}, the equation (\ref{eq:42}) 
\begin{align*}
\leq & \left(\frac{40}{41}+\frac{\mu}{41}+\frac{11}{15}(40-\mu)-\frac{49}3\right)\rk(F) <(14-\frac{2}3\mu)\rk(F) = 14 \rk(F) -\frac{4}3H\ch_1(F).
\end{align*}
This is less than that in equation (\ref{eq:43}), since in this case we have assumed that 
$$\frac{\ch_2(F)}{H^2\rk(F)}>\frac{3}2\left(\frac{H\ch_1(F)}{H^2\rk(F)}\right)^2-\frac{H\ch_1(F)}{H^2\rk(F)}>-\frac{1}{10}\implies \ch_2(F)+\rk(F)>0.$$
\item When $\mu\in[2,5)$, note that in  Proposition \ref{prop:clfboundsforC225}, the right hand side in equation (\ref{eq:clf}) is always less than or equal to
$
\begin{cases}
\frac{7\mu}{120}+\frac{109}{120}, & \text{ when } \mu\in[2,5);\\
\frac{23\mu}{45}-\frac{71}{9}, & \text{ when } \mu\in(35,38].
\end{cases}
$

Therefore, the equation (\ref{eq:42}) 
\begin{align*}
\leq & \left(\frac{109}{120}+\frac{7\mu}{120}+\frac{23}{45}(40-\mu)-\frac{71}9\right)\rk(F) =13\frac{167}{360}\rk(F)-\frac{163}{180}H\ch_1(F).
\end{align*}
By (\ref{eq:43}-\ref{eq:42}) and the assumption on $F$ that it violates (\ref{eq:41}), we have 
$$-\frac{4}{15}H\ch_1(F)-\frac{7}{12}\rk(F)<\ch_2(F)\leq \frac{17}{180}H\ch_1(F)-1\frac{193}{360}\rk(F).$$

This is not possible since $H\ch_1(F)<2.5\rk(F)$.
\item When $\mu\in[5,10]$,  in Proposition \ref{prop:clfboundsforC225}, the equation (\ref{eq:clf}) is always less than or equal to $\frac{3\mu}{20}+\frac{1}2$. Therefore, the equation (\ref{eq:42}) 
\begin{align*}
\leq & \left(\frac{1}{2}+\frac{3\mu}{20}+\frac{2}{5}(40-\mu)-4\right)\rk(F) =  12\frac{1}2 \rk(F) -\frac{1}2H\ch_1(F).
\end{align*}

By (\ref{eq:43}-\ref{eq:42}), the object $F$ satisfies equation (\ref{eq:41}).
\end{itemize}
In either case of $\mu$, we always get contradiction. Therefore, any $\nu_{\alpha,0,H}$ or $\nu_{\alpha',1,H}$-tilt stable object $F$ with $\frac{H\ch_1(F)}{H^2\rk(F)}\in(0,1)$ satisfies the inequality (\ref{eq:41}).
\end{proof}

\begin{Cor}
Let $F$ be a torsion free $\mu_H$-slope semistable sheaf on $S_{2,5}$, then the numerical Chern characters of $F$ satisfy equation (\ref{eq:41}).
\label{cor:bgfors25}
\end{Cor}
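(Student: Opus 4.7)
\emph{Proof proposal.} The plan is to deduce the corollary from Proposition~\ref{prop:bgfors25} by two standard reductions: first from slope-semistable to slope-stable, then from slope-stable to tilt-stable.

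For the first reduction, let $F_1, \ldots, F_m$ be the Jordan--H\"older factors of $F$ among $\mu_H$-slope semistable sheaves. Each $F_i$ is torsion-free and $\mu_H$-slope stable, and $\mu_H(F_i) = \mu_H(F)$. Because the right-hand side of (\ref{eq:41}) depends only on the slope, the identity
$$\frac{\ch_2(F)}{H^2\rk(F)} \;=\; \sum_{i=1}^{m} \frac{H^2\rk(F_i)}{H^2\rk(F)}\cdot \frac{\ch_2(F_i)}{H^2\rk(F_i)}$$
shows that (\ref{eq:41}) for each $F_i$ implies (\ref{eq:41}) for $F$. Hence it suffices to treat the slope-stable case.

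For the second reduction, assume $F$ is $\mu_H$-slope stable, torsion-free, with $\mu_H(F) \in (0,1)$. Then $F$ lies in $\Coh^{0,H}(S_{2,5})$, and I claim that $F$ is $\nu_{\alpha, 0, H}$-tilt stable for all $\alpha \gg 0$. Indeed, given a non-trivial short exact sequence $0 \to A \to F \to B \to 0$ in $\Coh^{0,H}(S_{2,5})$, passing to cohomology sheaves in $\Coh(S_{2,5})$ yields a subsheaf of $F$; since $F$ is torsion-free, any such subsheaf $F' \subset F$ satisfies $\mu_H(F') < \mu_H(F)$ by slope stability. The $-\alpha H^2 \ch_0$ contribution in the numerator of $\nu_{\alpha,0,H}$ then dominates for $\alpha$ large, giving $\nu_{\alpha,0,H}(A) < \nu_{\alpha,0,H}(F)$, and tilt stability follows. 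Applying Proposition~\ref{prop:bgfors25} to $F$ at any such $\alpha$ yields (\ref{eq:41}) for $F$.

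Both reductions are routine; the substantive content of the corollary is already packaged into Proposition~\ref{prop:bgfors25}, itself a consequence of the Clifford-type bound for $\cer$ established in Proposition~\ref{prop:clfboundsforC225}. Thus the main (very mild) obstacle is making precise the slope-to-tilt stability upgrade at large $\alpha$, which is by now a standard manipulation in the framework of \cite{BMS:stabCY3s}.
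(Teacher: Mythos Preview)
Your proposal is correct and follows essentially the same approach as the paper. The paper's own proof is the one-liner ``This is by Proposition~\ref{prop:bgfors25} and by noticing that $F$ is $\nu_{\alpha,0,H}$-tilt stable for $\alpha\gg 0$,'' which literally read is slightly imprecise since a strictly slope-\emph{semistable} $F$ is only tilt-\emph{semistable} for $\alpha\gg 0$; your explicit Jordan--H\"older reduction to the stable case makes this step clean before invoking the large-volume-limit identification of slope stability with tilt stability.
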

\begin{proof}
This is by Proposition \ref{prop:bgfors25} and by noticing that $F$ is $\nu_{\alpha,0,H}$-tilt stable for $\alpha\gg 0$. 
\end{proof}

\begin{Cor}
Let $(X,H)$ be a smooth projective quintic threefold, $F$ be an object in $D^b(X)$ such that $\frac{H^2\ch_1(F)}{H^3\rk(F)}\in(0,1)$. Suppose $F$ is $\nu_{\alpha,0,H}$-tilt stable or $\nu_{\alpha',1,H}$-tilt stable for some $\alpha>0$ or $\alpha'>\frac{1}2$, then (\ref{eq:41}) holds for $F$ if one replaces $\ch_2(F)$, $H\ch_1(F)$ and $H^2\rk(F)$ by $H\ch_2(F)$, $H^2\ch_1(F)$ and $H^3\rk(F)$ respectively.
\label{cor:bgforX5}
\end{Cor}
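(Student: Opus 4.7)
The plan is to run the same skeleton as the proof of Proposition \ref{prop:bgfors25}, but with the final surface-to-curve restriction replaced by a threefold-to-surface restriction from $X$ to a generic $S=S_{2,5}\in|2H|$, and the Clifford-type input replaced by Corollary \ref{cor:bgfors25} (the surface version of \eqref{eq:41}). Concretely, I would argue by contradiction: suppose some $F\in D^b(X)$ with $\frac{H^2\ch_1(F)}{H^3\rk(F)}\in(0,1)$ satisfies the hypotheses but violates the threefold analog of \eqref{eq:41}, and choose such an $F$ minimizing $\db_H(F)$. Each branch of the right-hand side of \eqref{eq:41} is convex in $\frac{H^2\ch_1}{H^3\rk}$, so if $F$ becomes strictly $\nu_{\alpha,0,H}$- or $\nu_{\alpha',1,H}$-tilt semistable along the wall through $p_H(F)$, Lemma \ref{lem:discriminnatdec} forces some Jordan--H\"older factor $F_i$ to remain in the excluded region with $\db_H(F_i)\le \db_H(F)$ and to be itself tilt-stable at the generic point of that wall, contradicting minimality. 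Thus we may assume $F$ is $\nu_{\alpha,0,H}$-tilt stable for every $\alpha>0$; using the involution $F\mapsto F^\vee(H)$, whose effect on characters is $\bigl(\tfrac{H^2\ch_1}{H^3\rk},\tfrac{H\ch_2}{H^3\rk}\bigr)\mapsto\bigl(1-\tfrac{H^2\ch_1}{H^3\rk},\tfrac{H\ch_2}{H^3\rk}-\tfrac{H^2\ch_1}{H^3\rk}+\tfrac12\bigr)$ (a symmetry that preserves both the hypothesis range and each branch of \eqref{eq:41}), the $\nu_{\alpha',1,H}$-tilt stable case reduces to the $\beta=0$ case as well.

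Next, write $(a,b)=p_H(F)$, so that $p_H(F(-2H)[1])=(a-2b+2,b-2)$. The line joining these two points has equation $(b-1)Y-X=-a+b^2-b$; its intersection with $\{\beta=0\}$ is $(\alpha_0,0)$ with $\alpha_0=a-b^2+b$, and its intersection with $\{\beta=-1\}$ is $(\alpha'_0,-1)$ with $\alpha'_0=a-b^2+1$. Violation of \eqref{eq:41} gives $a>\tfrac32 b^2-b$, whence $\alpha_0>\tfrac12 b^2>0$ and $\alpha'_0-\tfrac12>\tfrac12(b-1)^2\geq 0$. A direct check shows that the segment between these two points stays above the parabola $\alpha=\beta^2/2$ on $\beta\in[-1,0]$, so by Bertram's Nested Wall Theorem (Lemma \ref{lem:nestedandopen}(b)) both $F$ and $F(-2H)[1]$ lie in $\Coh^{0,H}(X)$ and are $\nu_{\alpha_0,0,H}$-tilt stable; by Lemma \ref{lem:nestedandopen}(b$'$) they share the same tilt-slope.

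Now I would apply Feyzbakhsh's restriction lemma (Lemma \ref{lem:soheylares}) with $m=2$: for a very general smooth irreducible $S=S_{2,5}\in|2H|$, the restriction $F|_S$ is $\mu_{H_S}$-slope semistable on $S$. Combining the identities of Lemma \ref{lem:soheylares} with $H_S^2=2H^3$ gives
\[
\frac{H_S\ch_1(F|_S)}{H_S^2\rk(F|_S)}=\frac{H^2\ch_1(F)}{H^3\rk(F)},\qquad \frac{\ch_2(F|_S)}{H_S^2\rk(F|_S)}=\frac{H\ch_2(F)}{H^3\rk(F)},
\]
so Corollary \ref{cor:bgfors25} applied to $F|_S$ immediately yields precisely the desired inequality on $X$, contradicting the choice of $F$.

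The main technical obstacle will be executing the minimality reduction cleanly, in particular verifying that a strictly tilt-semistable counterexample forces at least one Jordan--H\"older factor to remain in the excluded region while having discriminant no larger than that of $F$. This is a routine but tedious case analysis: one checks it separately on the five pieces of the piecewise definition of \eqref{eq:41}, using convexity on each piece and the fact that the discriminant controls how far $p_H(F_i)$ can stray from the line spanned by $\vb(F)$. Once this reduction is in place, the restriction-plus-surface-input step is essentially formal.
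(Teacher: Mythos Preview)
Your overall architecture is the same as the paper's, and the endgame (restrict to $S_{2,5}\in|2H|$ via Lemma \ref{lem:soheylares}, then quote Corollary \ref{cor:bgfors25}) is exactly right. But there is a genuine gap in your reduction step that makes the restriction lemma inapplicable as written.

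The issue is your treatment of $F(-2H)[1]$. To invoke Lemma \ref{lem:soheylares} you need $F(-2H)[1]$ to be $\nu_{\alpha_0,0,H}$-tilt stable. You claim this follows from Bertram's nested wall theorem, but Lemma \ref{lem:nestedandopen}(b) only propagates stability along the line \emph{once you know stability at one point on that line}. You have established that $F$ is $\nu_{\alpha,0,H}$-tilt stable for all $\alpha>0$; tensoring by $\cO(-2H)$, this tells you only that $F(-2H)$ is tilt stable along $\beta=-2$, and the line through $p_H(F)$ and $p_H(F(-2H))$ does not in general meet $\{\beta=-2\}$ above the parabola (indeed $a>b^2+b$ would be required, whereas Bogomolov forces $a\le b^2/2$). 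The natural anchor point is $(\alpha'_0,-1)$, but $F(-2H)[1]$ stable there is equivalent to $F[1]$ being $\nu_{\alpha'_0,1,H}$-tilt stable, and your involution trick has thrown away exactly this information: reducing the \emph{hypothesis} to $\beta=0$ does not give you stability of the same $F$ along $\beta=1$.

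What the paper does (in the proof of Proposition \ref{prop:bgfors25}, invoked verbatim here) is to establish that the minimal counterexample $F$ is stable along \emph{both} lines $\beta=0$ and $\beta=1$. This requires an extra ingredient you omitted: the vertical wall at $\beta_0=\tfrac{H^2\ch_1(F)}{H^3\rk(F)}$. If $F$ becomes strictly semistable there, one replaces $F$ by a Jordan--H\"older factor $F_i$ with the same slope $\beta_0$, with $\tfrac{H\ch_2(F_i)}{H^3\rk(F_i)}\ge \tfrac{H\ch_2(F)}{H^3\rk(F)}$ (so $F_i$ still violates \eqref{eq:41}) and $\db_H(F_i)\le\db_H(F)$. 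This $F_i$ is tilt stable in a neighbourhood of the vertical wall, hence for $\alpha\gg0$ on both $\beta=0$ and $\beta=1$; then your first-paragraph argument (no strictly semistable point along either line, by minimality) gives stability for all $\alpha>0$ on $\beta=0$ and all $\alpha'>\tfrac12$ on $\beta=1$. With both in hand, $F(-2H)[1]$ is $\nu_{\alpha'_0,-1,H}$-tilt stable, and nested wall then legitimately carries it to $(\alpha_0,0)$. After this correction your proof goes through exactly as you wrote it.
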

\begin{proof}
Suppose there is some $\nu_{\alpha,0,H}$ or $\nu_{\alpha',1,H}$-tilt stable object $F$ with $\frac{H^2\ch_1(F)}{H^3\rk(F)}\in(0,1)$ violating the inequality (\ref{eq:41}), we may assume that $F$ is with the minimum discriminant $\db_H$ among such object. By the same argument as that in Proposition \ref{prop:bgfors25}, we may assume that $F\in \Coh^{0,H}(S_{2,5})$ is $\nu_{\alpha,0,H}$-tilt stable for all $\alpha>0$ and $\nu_{\alpha',1,H}$-tilt stable for all $\alpha'>\frac{1}2$. 

Due to the same argument as that in Proposition \ref{prop:bgfors25} and Lemma \ref{lem:soheylares}, there exists $S_{2,5}\in |2H|$ such that $F|_{S_{2,5}}$ is $\mu_{H_{S_{2,5}}}$slope semistable with 
$$H_{S_{2,5}}^2\rk(F|_{S_{2,5}})=2H^3\rk(F), H_{S_{2,5}}\ch_1(F|_{S_{2,5}})=2H\ch_1(F)  \text{ and } \ch_2(F|_{S_{2,5}})=2H\ch_2(F).$$
Note that the characters of $F|_{S_{2,5}}$ violate the inequality (\ref{eq:41}), by Corollary \ref{cor:bgfors25}, we get the contradiction.
\end{proof}

We restate Corollary \ref{cor:bgforX5} as a theorem in the following neater version. The inequality is slightly weaker but can be applied more effectively in the proof for our main theorem on the third Chern character. It can also be viewed as a stronger Bogomolov-Gieseker type inequality in the classical sense.   

\begin{Thm}
 Let $(X,H)$ be a smooth projective quintic threefold, $F$ be a slope semistable sheaf in $\Coh(X)$ (or  a $\nu_{\alpha,0,H}$-tilt semistable object for some $\alpha>0$, especially Brill-Noether semistable object in $\Coh^{0,H}(X)$). Suppose $\frac{H^2\ch_1(F)}{H^3\rk(F)}\in[-1,1]$, then
\begin{align*}
\frac{H\ch_2(F)}{H^3\rk(F)}\leq \begin{cases}
-\frac{1}2\abs{\frac{H^2\ch_1(F)}{H^3\rk(F)}}, & \text{when } \abs{\frac{H^2\ch_1(F)}{H^3\rk(F)}} \in [0, \frac{1}4];\\
\frac{1}2\abs{\frac{H^2\ch_1(F)}{H^3\rk(F)}} - \frac{1}4 , & \text{when } \abs{\frac{H^2\ch_1(F)}{H^3\rk(F)}} \in [\frac{1}4, \frac{3}4];\\
\frac{3}2\abs{\frac{H^2\ch_1(F)}{H^3\rk(F)}} - 1 , & \text{when } \abs{\frac{H^2\ch_1(F)}{H^3\rk(F)}} \in [\frac{3}4, 1].
\end{cases}
\end{align*}
The `$=$' can hold only when $\frac{H^2\ch_1(F)}{H^3\rk(F)}\in\frac{1}4\Z$. Moreover, when $\abs{\frac{H^2\ch_1(F)}{H^3\rk(F)}} \in [0, \frac{1}{10}]\cup[\frac{9}{10},1]$, we have $\frac{H\ch_2(F)}{H^3\rk(F)}\leq\frac{3}2\abs{\frac{H^2\ch_1(F)}{H^3\rk(F)}}^2-\abs{\frac{H^2\ch_1(F)}{H^3\rk(F)}}$.
\label{thm:BGforslopestableX5}
\end{Thm}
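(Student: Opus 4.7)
The strategy is to derive Theorem \ref{thm:BGforslopestableX5} as a (slightly weakened, but more uniformly-formulated) consequence of Corollary \ref{cor:bgforX5}, whose proof via Proposition \ref{prop:bgfors25} and Feyzbakhsh's restriction Lemma \ref{lem:soheylares} will already be in hand. Writing $\mu:=\frac{H^2\ch_1(F)}{H^3\rk(F)}$, I first reduce to the range $\mu\in[0,1]$: for $\mu\in[-1,0]$ the twist $F\mapsto F(H)$ shifts $\mu$ to $\mu+1\in[0,1]$ while preserving slope- and tilt-semistability (the latter with $\beta=0$ replaced by $\beta=1$, which is exactly the second hypothesis allowed in Corollary \ref{cor:bgforX5}); a direct computation then converts the bound on $F(H)$ back into the advertised bound on $F$ in terms of $|\mu|$. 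Next, both hypothesis classes of the Theorem fit the hypothesis of Corollary \ref{cor:bgforX5}: a torsion free $\mu_H$-slope semistable sheaf is $\nu_{\alpha,0,H}$-tilt semistable for $\alpha\gg 0$, and a $\nu_{\alpha,0,H}$-tilt semistable object for some $\alpha>0$ is so by assumption (in particular this covers the Brill-Noether semistable case).

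The Corollary is stated for tilt-\emph{stable} objects, but lifting to the tilt-\emph{semistable} case is routine: take the Jordan-H\"older factors $\{F_i\}$ with respect to $\nu_{\alpha,0,H}$; each is tilt-stable of the same slope $\mu$, so (\ref{eq:41}) gives the bound for every $F_i$; since the right-hand side of (\ref{eq:41}) is piecewise linear in the numerical characters once the regime of $\mu$ is fixed, summing over the factors recovers the bound for $F$, as in the opening reduction of the proof of Proposition \ref{prop:bgfors25}.

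The heart of the remaining argument is then a direct piecewise comparison. Let $L(\mu)$ denote the right-hand side of the Theorem and $C(\mu)$ that of (\ref{eq:41}); I must show $C(\mu)\leq L(\mu)$ for $\mu\in[0,1]$. On the middle range $[\tfrac14,\tfrac34]$ the two expressions are identical, both equal to $\tfrac{\mu}{2}-\tfrac14$. On $[0,\tfrac14]$, $C$ consists of the quadratic $\tfrac{3\mu^2}{2}-\mu$ on $[0,\tfrac{1}{10}]$ and the line $-\tfrac{4\mu}{15}-\tfrac{7}{120}$ on $[\tfrac{1}{10},\tfrac14]$; the two pieces meet at $(\tfrac{1}{10},-\tfrac{17}{200})$ and together trace a convex arc lying beneath the secant $L(\mu)=-\tfrac{\mu}{2}$ joining $(0,0)$ and $(\tfrac14,-\tfrac18)$. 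The range $[\tfrac34,1]$ is handled by the mirror symmetry $\mu\mapsto 1-\mu$. The strengthened bound $H\ch_2(F)/H^3\rk(F)\leq\tfrac{3\mu^2}{2}-|\mu|$ on $|\mu|\in[0,\tfrac{1}{10}]\cup[\tfrac{9}{10},1]$ is simply the first (quadratic) piece of (\ref{eq:41}) in each direction; for the equality clause, one verifies that the secant meets the convex envelope only at $\mu\in\{0,\tfrac14,\tfrac12,\tfrac34,1\}\subset\tfrac14\mathbb Z$.

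The main obstacle I anticipate is not in this final comparison but in the boundary slopes $\mu\in\{0,\pm 1\}$, which are excluded from the open-interval hypothesis of Corollary \ref{cor:bgforX5}: at $\mu=0$ the inequality $H\ch_2(F)\leq 0$ follows from the classical Bogomolov bound $\db_H(F)\geq 0$ of Theorem \ref{thm:discrimbg}, and $\mu=\pm 1$ is reached as a boundary limit of the quadratic piece combined with an additional $\mathcal O(H)$-twist. The genuine substance of the Theorem is concentrated in Corollary \ref{cor:bgforX5} and ultimately in the Clifford-type Proposition \ref{prop:clfboundsforC225}; the present statement is essentially a convexity and linear-interpolation repackaging once those results are in hand.
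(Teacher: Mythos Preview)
Your overall approach is correct and matches the paper's intent exactly: Theorem \ref{thm:BGforslopestableX5} is presented in the paper as a repackaging of Corollary \ref{cor:bgforX5}, and you have correctly identified that the content is a piecewise comparison $C(\mu)\le L(\mu)$ together with a reduction from $[-1,1]$ to $[0,1]$ via twisting. The boundary cases $\mu\in\{0,\pm1\}$ and the strengthened quadratic bound are handled as you describe.

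There are, however, two genuine imprecisions. First, in lifting from tilt-\emph{stable} to tilt-\emph{semistable}, the Jordan--H\"older factors with respect to $\nu_{\alpha,0,H}$ share the same \emph{tilt}-slope, not the same $\mu_H$-slope; different factors can lie in different regimes of (\ref{eq:41}), and some may have $\mu_H\notin(0,1)$ or even rank zero. So ``summing over the factors'' does not work as stated. The correct argument --- which you do cite --- is the convexity-plus-minimal-discriminant reduction from the opening of Proposition \ref{prop:bgfors25}: if a semistable $F$ violates the bound, convexity of the bounding curve forces some stable factor $F_i$ with $\mu_H(F_i)\in(0,1)$ to violate it too, contradicting minimality of $\db_H$.

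Second, and more substantively, your verification of the equality clause is wrong. On the middle range $[\tfrac14,\tfrac34]$ the two functions $C(\mu)$ and $L(\mu)$ are \emph{identically equal} to $\tfrac{\mu}{2}-\tfrac14$, so the secant does not meet the envelope only at the five listed points --- it coincides with it on the whole interval. Consequently, comparing $C$ and $L$ alone cannot yield the assertion that equality in the Theorem forces $\mu\in\tfrac14\Z$ when $\mu\in(\tfrac14,\tfrac34)$. That strictness has to come from strictness in (\ref{eq:41}) itself on this range, which is not part of the statement of Corollary \ref{cor:bgforX5}; one must go back into the proof of Proposition \ref{prop:bgfors25} (or invoke an integrality argument for $H\ch_2$) to extract it.
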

\begin{Rem}[Other projective Calabi-Yau threefolds] One may expect to generalize the argument to some other  Calabi-Yau threefolds that can be realized as a complete intersection in $\mathbf P^N$ for $N$ not too large. To do that one could replace  $S_{2,5}$ by a smooth subvariety $Y\in|2H|$ and the curve $C_{2,2,5}$ by $C\in |2H_Y|$. Evidently, the inequality in  Proposition \ref{prop:bgfors25} does not hold for $Y$ in general. The first non-trivial task is to find a suitable Bogomolov-Gieseker type inequality for $Y$, the inequality needs to be sharp for some value of $\frac{H\ch_1}{\rk}$, especially when $\frac{H\ch_1}{\rk}=\frac{H^3}{\chi(\mathcal O_X(H))-1}$, so that it is strong enough to prove Proposition \ref{prop:BGforbnstab}. The next task is to estimate a Clifford type inequality for the curve $C$. It is worth to mention that some results in \cite{Lange2015g4,Lange2015g5,Lange2017g6} may help. Also one may consider to use the method in Section \ref{sec:clf} by finding a suitable surface containing the curve. As a summary, our methods are expect to be generalized to some other Calabi-Yau threefolds, meanwhile it seems that each deformation type will require much computation.
\label{rem:othercy3}
\end{Rem}

\bibliography{all}                      
\bibliographystyle{halpha}     

\end{document}